\theoremstyle{plain}
\newtheorem{theorem}{Theorem}[section]
\newtheorem{lemma}[theorem]{Lemma}
\newtheorem{corollary}[theorem]{Corollary}
\newtheorem{proposition}[theorem]{Proposition}
\newtheorem{claim}[theorem]{Claim}
\theoremstyle{definition}
\newtheorem{definition}[theorem]{Definition}
\newtheorem{remark}[theorem]{Remark}
\newtheorem{notation}[theorem]{Notation}
\newcommand{\R}{{\mathbb R}}
\newcommand{\C}{{\mathbb C}}
\newcommand{\F}{{\mathbb F}}
\newcommand{\Ker}{{\operatorname{Ker}}}
\newcommand{\Sc}{{\mathcal S}}
\newcommand{\tG}{{\widetilde{G}}}
\newcommand{\tC}{{\widetilde{C}}}
\newcommand{\tO}{{\widetilde{O}}}
\newcommand{\Fou}{{\mathcal{F}}}
\renewcommand{\ss}{{\subseteq}}
\title [Multiplicity one theorem for a positive characteristic] {Multiplicity one theorem for $(\mathrm{GL}_{n+1},\mathrm{GL}_n)$ over a local field of positive characteristic}
\author{Dor Mezer}
\date{\today}
\keywords{Distribution, Multiplicity one, Gelfand pair, invariant distribution}
\subjclass[2010]{20G05, 20G25, 22E50, 46F10}
\begin{document}

\begin{abstract}
Let $\mathbb{F}$ be a non-archimedean local field of positive characteristic
different from 2. We consider distributions on $\mathrm{GL}(n+1,\mathbb{F})$
which are invariant under the adjoint action of $\mathrm{GL}(n,\mathbb{F})$. We prove that any such distribution is invariant with respect to transposition. This implies that the
restriction to $\mathrm{GL}(n,\F)$ of any irreducible smooth representation of $\mathrm{GL}(n+1,\F)$ is multiplicity free.% This consequence was already known before using other methods, due to \cite{AAG}, but it is also useful to know the theorem about distributions, as it also implies some other related theorems on representations.
%The work on this problem was started by Guy Henniart, but his work has never been published. %Ask Dima what to write here
\end{abstract}

\maketitle
\tableofcontents
\section {Introduction}
Let $\F$ be a non-archimedean local field of positive characteristic different from 2.  
Consider the standard embedding of $\mathrm{GL}(n,\F)$ into $\mathrm{GL}(n+1,\F)$, and let $\mathrm{GL}(n,\F)$ act on $\mathrm{GL}(n+1,\F)$ by conjugation.
The goal of this paper is to prove the following theorem.
\begin {theorem} \label{goal}
Every $\mathrm{GL}(n,\F)$-invariant distributution on $\mathrm{GL}(n+1,\F)$ is invariant with respect to transposition.
\end{theorem}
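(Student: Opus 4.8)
The plan is to adapt to positive characteristic the Harish-Chandra descent argument by which the statement is known over a non-archimedean local field of characteristic zero (proved by Aizenbud, Gourevitch, Rallis and Schiffmann). Write $\tG=\mathrm{GL}(n+1,\F)$ with its conjugation action of $G=\mathrm{GL}(n,\F)$, and let $\sigma\colon\tG\to\tG$ be transposition. Since $\sigma(hgh^{-1})={}^{t}h^{-1}\,\sigma(g)\,{}^{t}h$ with ${}^{t}h\in G$, the involution $\sigma$ carries $G$-orbits to $G$-orbits, so for a $G$-invariant distribution $\xi$ the distribution $\eta:=\xi-\sigma_{*}\xi$ is $G$-invariant and $\sigma$-anti-invariant; it suffices to prove that every such $\eta$ vanishes.

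First I would pass to a linear problem. Using Bernstein's localization principle for the $G$-invariant map sending $g\in\tG$ to its characteristic polynomial, together with a Harish-Chandra descent around a semisimple part $s$ — whose centralizer in $\tG$, intersected with $G$, decomposes into general linear groups over finite extensions of $\F$ in the way appropriate to the pair $(\tG,G)$ — the theorem reduces, by induction on $n$, to the contribution of central $s$. Via the Cayley transform $g\mapsto(1-g)(1+g)^{-1}$, a $\tG$-conjugation-equivariant rational isomorphism $\tG\dashrightarrow\mathfrak{gl}(n+1,\F)$ that is well defined near the identity precisely because $\mathrm{char}\,\F\neq2$ and carries unipotents to nilpotents, this in turn reduces to the Lie algebra analogue: every $G$-invariant distribution on $\mathfrak{gl}(n+1,\F)$ is transpose-invariant. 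Splitting off the $G$- and $\sigma$-fixed line $\F\cdot E_{n+1,n+1}$ presents the problem on $X=\mathfrak{gl}(n,\F)\oplus\F^{n}\oplus(\F^{n})^{*}$, with $G$-action $h\cdot(A,v,\varphi)=(hAh^{-1},hv,\varphi h^{-1})$ and involution $\sigma(A,v,\varphi)=({}^{t}A,{}^{t}\varphi,{}^{t}v)$. A further Harish-Chandra descent in the $A$-variable — using that for regular semisimple $A$ the point $\sigma(A,v,\varphi)$ is $G$-conjugate to $(A,v,\varphi)$ (match the $Z_{\tG}(A)$-invariants of $(v,\varphi)$), and induction on $n$ for the intermediate strata — then confines the support of $\eta$ to the locus where $A$ lies in the nilpotent cone $\mathcal N$ of $\mathfrak{gl}(n,\F)$.

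The heart of the argument is then a Fourier-analytic computation on $X$. Equip $X$ with the $G$- and $\sigma$-invariant non-degenerate symmetric form $\tr(A_{1}A_{2})+\varphi_{1}(v_{2})+\varphi_{2}(v_{1})$, which stays non-degenerate in every characteristic, and let $\Fou$ be the associated Fourier transform on distributions on $X$; it commutes with $G$ and with $\sigma$. A homogeneity-and-wave-front argument, using the $\F^{\times}$-scalings on $X$ exactly as over characteristic zero, shows that $\Fou(\eta)$ inherits the same support restrictions as $\eta$. Iterating the descent — now with respect to the matrix $\begin{pmatrix}A&v\\ \varphi&0\end{pmatrix}\in\mathfrak{gl}(n+1,\F)$, which reduces the problem, again by induction on $n$, to distributions supported on the cone
\[
Q=\{(A,v,\varphi)\in X:\ A\in\mathcal N,\ \varphi A^{i}v=0\ \text{for all }i\ge0\}
\]
of pairs for which that matrix is nilpotent — and stratifying $Q$ by its finitely many nilpotent $G$-orbits, one is reduced to a short list of ``critical'' orbits. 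On each of these the $\F^{\times}$-homogeneity degree of an eigendistribution, together with the shift of this degree under $\Fou$, leaves only finitely many possibilities, and an explicit computation exploiting $\sigma$ and once more $\mathrm{char}\,\F\neq2$ rules out any nonzero $\sigma$-anti-invariant contribution. Hence $\eta=0$.

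\textbf{The main obstacle.} The step I expect to be genuinely delicate, and the reason the characteristic-zero proof does not transcribe verbatim, is the Harish-Chandra descent over a local field of positive characteristic, which is imperfect. Luna's \'etale slice theorem and the ``generalized Harish-Chandra descent'' formalism are not available as stated; over the good strata I would replace them by a characteristic-free slice theorem in the style of Bardsley--Richardson, which applies because the adjoint orbits of $\mathrm{GL}_{n}$ are separable and the trace form keeps $\mathfrak{gl}(n,\F)$ self-dual even when $p\mid n$. The genuinely new phenomenon is imperfectness: there exist elements of $\mathrm{GL}(n+1,\F)$ whose characteristic polynomial is inseparable, whose absolute Jordan decomposition is not $\F$-rational, and whose centralizers in $\mathrm{GL}(n+1,\F)$ fail to be reductive — they are unit groups of purely inseparable field extensions of $\F$. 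Such elements have no analogue over a characteristic-zero field, and the stratum they form must be treated on its own, for instance by descending along a finite purely inseparable extension of $\F$ over which the offending strata become ordinary, using that the relevant spaces of distributions are insensitive to such an extension. It then remains only to check that the analytic inputs to the Fourier step — Bernstein localization, the wave-front-set bounds, and the homogeneity lemmas on cones — carry over unchanged to $\F$ of positive characteristic, which they do.
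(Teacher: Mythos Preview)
Your outline is the characteristic-zero AGRS argument with a patch for imperfectness, but the patch is where the proof breaks. The assertion that one can ``descend along a finite purely inseparable extension of $\F$ over which the offending strata become ordinary, using that the relevant spaces of distributions are insensitive to such an extension'' is not justified and, as stated, is not correct. A purely inseparable extension $\F'/\F$ of local fields is a nontrivial finite extension of complete valued fields; the map $\mathrm{GL}_{n+1}(\F)\hookrightarrow\mathrm{GL}_{n+1}(\F')$ is a proper closed embedding and there is no general mechanism that transports vanishing of $(\tG,\chi)$-equivariant distributions from $\F'$-points back to $\F$-points. Moreover, a \emph{finite} purely inseparable extension does not make all characteristic polynomials separable --- you only kill the inseparability of a given element, not of the whole inseparable locus --- so even granting the descent you would still face the same stratum over $\F'$. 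Finally, the Bardsley--Richardson slice applies to separable orbits; the orbits through elements with non-reductive centralizer (your ``unit groups of purely inseparable field extensions'') are precisely the non-separable ones, so the slice argument gives you nothing there. This is not a detail that can be finessed: the inseparable stratum is exactly where the characteristic-zero machinery of Harish-Chandra descent and Luna slices has no analogue, and your proposal does not supply a replacement.

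The paper takes a different route that sidesteps Jordan decomposition and Harish-Chandra descent entirely. After the same linearization to $X=\mathrm{gl}(V)\times V\times V^*$, it localizes not over the semisimple stratification but over the fibers of the characteristic-polynomial map $\Delta(A,v,\phi)=\mathrm{ch}(A)$, which makes sense for inseparable polynomials just as well. Inside each fiber it introduces two families of $\tG$-equivariant homeomorphisms, $\nu_\lambda(A,v,\phi)=(A+\lambda\,v\otimes\phi,v,\phi)$ and $\rho_f(A,v,\phi)=(A,f(A)v,\phi f(A))$, and uses them together with a short linear-algebra identity (Theorem~\ref{CH}) to force the support into $R=\{(A,v,\phi):\phi A^{k}v=0\ \forall k\}$. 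It then stratifies each fiber $\Delta^{-1}(g)$ by orbit dimension --- there are only finitely many orbits, by rational canonical form over $\F$, which needs no separability --- and reduces via Frobenius descent to a single companion block. The endgame on a companion block is a filtration argument combined with the Rallis--Schiffmann homogeneity theorem. None of these steps invokes a slice theorem, an \'etale neighbourhood, or the rationality of the Jordan decomposition, which is why the argument goes through uniformly in positive characteristic $\neq 2$.
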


%This theorem has several applications. Some of these were already known and were proven using other methods, but this still gives a new proof of them.\\
Using the Gelfand-Kazhdan criterion (see \cite{GK}), this theorem implies the following one. For more details, see section 1 of \cite{AGRS}.
\begin{theorem} \label{mult1}
Let $\pi$ be an irreducible smooth representation of $\mathrm{GL}(n+1,\F)$, and $\rho$ be an irreducible smooth representation of $\mathrm{GL}(n,\F)$. Then% restriction of $\pi$ to $\mathrm{GL}(n,\F)$ is multiplicity free.
$$\dim\mathrm{Hom}_{\mathrm{GL}(n,\F)}(\pi|_{\mathrm{GL}(n,\F)},\rho)\leq 1$$
\end{theorem}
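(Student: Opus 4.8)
The plan is to deduce Theorem~\ref{mult1} from Theorem~\ref{goal} by the Gelfand--Kazhdan method, exactly as in \cite[\S1]{AGRS}. All of the harmonic analysis of $\ell$-groups used below is insensitive to the characteristic of $\F$, so the only genuinely non-formal input is Theorem~\ref{goal} itself. Write $G'=\mathrm{GL}(n+1,\F)$, $H=\mathrm{GL}(n,\F)$, let $\iota\colon H\hookrightarrow G'$ be the standard embedding, and set $\tG=G'\times H$ with the diagonally embedded subgroup $\Delta H=\{(\iota(h),h):h\in H\}$. First I would reduce the theorem to the statement that $(\tG,\Delta H)$ is a Gelfand pair in the weak sense, i.e.\ $\dim\operatorname{Hom}_{\Delta H}(\pi\boxtimes\tau,\C)\le 1$ for all irreducible smooth $\pi$ of $G'$ and $\tau$ of $H$. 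Indeed, irreducible smooth representations of these groups are admissible, so the double contragredient $\widetilde{\widetilde\rho}$ is $\rho$, and smoothness of $\pi$ yields the adjunction $\operatorname{Hom}_H(\pi|_H,\rho)\cong\operatorname{Hom}_H(\pi\otimes\widetilde\rho,\C)\cong\operatorname{Hom}_{\Delta H}(\pi\boxtimes\widetilde\rho,\C)$; taking $\tau=\widetilde\rho$ then gives Theorem~\ref{mult1}.

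Next I would invoke the Gelfand--Kazhdan criterion \cite{GK} in the following form: if $\sigma$ is an involutive anti-automorphism of $\tG$ with $\sigma(\Delta H)=\Delta H$ such that every distribution on $\tG$ invariant under both left and right translation by $\Delta H$ is also $\sigma$-invariant, then $\dim\operatorname{Hom}_{\Delta H}(\Pi,\C)\cdot\dim\operatorname{Hom}_{\Delta H}(\widetilde\Pi,\C)\le 1$ for every irreducible admissible representation $\Pi$ of $\tG$. Applied to $\Pi=\pi\boxtimes\widetilde\rho$, whose contragredient is $\widetilde\pi\boxtimes\rho$, this gives $\dim\operatorname{Hom}_{\Delta H}(\pi\boxtimes\widetilde\rho,\C)\le 1$, which is exactly what is needed. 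The anti-involution to use is $\sigma(g,h)=({}^{t}g,{}^{t}h)$; since ${}^{t}\iota(h)=\iota({}^{t}h)$, it preserves $\Delta H$. So everything comes down to the assertion that \emph{every distribution on $\tG$ that is invariant under two-sided $\Delta H$-translation is $\sigma$-invariant}.

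To prove this I would identify the two-sided $\Delta H$-invariant distributions on $\tG$ with the $H$-conjugation-invariant distributions on $G'$. The key move is the change of variables $\Phi\colon\tG\to\tG$, $\Phi(g,h)=(\iota(h)^{-1}g,\,h)$, an algebraic automorphism of $\tG$ as an $\ell$-space; a direct computation shows that in the new coordinates $(g',h)$ a distribution is two-sided $\Delta H$-invariant if and only if it is invariant under (i) arbitrary left and right translations in the $h$-variable and (ii) the conjugation action $g'\mapsto\iota(b)g'\iota(b)^{-1}$, $b\in H$. Since $\mathcal D(G'\times H)$ restricted to distributions invariant under translations in $H$ is isomorphic to $\mathcal D(G')$ (integration along the $H$-fibre), condition (i) forces such a distribution to have the form $\eta\otimes dh$ with $dh$ a Haar measure on $H$ (using unimodularity of $H$), and then (ii) says exactly that $\eta\in\mathcal D(G')$ is $H$-conjugation-invariant. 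A further short computation shows that in these coordinates $\sigma$ carries $(g',h)$ into the $\Delta H$-orbit of $({}^{t}g',{}^{t}h)$, so $\sigma$-invariance of $\eta\otimes dh$ is precisely transpose-invariance of $\eta$. Hence the displayed assertion is literally the content of Theorem~\ref{goal}, and the proof is complete.

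I expect no essential obstacle in this argument: once Theorem~\ref{goal} is granted, the deduction is a formal, characteristic-free consequence of it and the Gelfand--Kazhdan machinery. The only points requiring a little care are the distributional bookkeeping — checking that $\Phi$ intertwines the two $\Delta H$-actions as claimed and applying the fibre-integration lemma — and verifying the hypotheses of the Gelfand--Kazhdan criterion ($\widetilde{\widetilde\rho}\cong\rho$ for admissible $\rho$, and irreducibility and admissibility of external tensor products of irreducible admissibles), all of which are available over a local field of positive characteristic. The real difficulty, of course, is concentrated entirely in Theorem~\ref{goal}, whose proof occupies the rest of the paper.
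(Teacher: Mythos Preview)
Your proposal is correct and follows exactly the route the paper indicates: the paper gives no proof of Theorem~\ref{mult1} beyond the sentence ``Using the Gelfand--Kazhdan criterion (see \cite{GK}), this theorem implies the following one. For more details, see section~1 of \cite{AGRS}'', and you have spelled out precisely that deduction. Two small remarks: to pass from the product bound $\dim\operatorname{Hom}_{\Delta H}(\Pi,\C)\cdot\dim\operatorname{Hom}_{\Delta H}(\widetilde\Pi,\C)\le 1$ to $\dim\operatorname{Hom}_{\Delta H}(\Pi,\C)\le 1$ you are implicitly using that the two factors coincide, which follows from the Gelfand--Kazhdan fact $\pi\circ{}^{t}\cong\widetilde\pi$ for irreducible smooth representations of $\mathrm{GL}_m$; and your symbol $\tG$ for $G'\times H$ clashes with the paper's standing notation $\tG=G\rtimes S_2$, so it should be renamed before insertion.
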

For theorems and conjectures on when the dimension is 1 and when it is zero see \cite{GGPW}.\\
An important direct consequence of theorem \ref{goal} is the following:
\begin{theorem} \label{Kir}
Let $P_n$ be the subgroup of $\mathrm{GL}(n,\F)$ consisting of matrices whose last row is $(0,\dots,0,1)$. Then any distribution on $\mathrm{GL}_n$ invariant to conjugation by $P_n$ is also invariant to conjugation by $\mathrm{GL}_n$.
\end{theorem}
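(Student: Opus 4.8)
The plan is to deduce this from Theorem~\ref{goal} applied to the pair $(\mathrm{GL}_n,\mathrm{GL}_{n-1})$, following a pattern that is classical since Bernstein's work on Kirillov's conjecture. Write $\tau(x)=x^t$ for transposition and $c_g(x)=gxg^{-1}$ for conjugation by $g$, and let $\xi$ be a distribution on $\mathrm{GL}_n$ invariant under $c_g$ for every $g\in P_n$. Since the standard copy of $\mathrm{GL}_{n-1}$, consisting of the matrices $\mathrm{diag}(h,1)$ with $h\in\mathrm{GL}_{n-1}$, is contained in $P_n$, the distribution $\xi$ is in particular invariant under conjugation by $\mathrm{GL}_{n-1}$; hence, by Theorem~\ref{goal} for $\mathrm{GL}_{n-1}\subset\mathrm{GL}_n$, it is invariant under $\tau$, that is, $\tau_*\xi=\xi$.

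Next I would transport the $P_n$-invariance through $\tau$. As $\tau$ is an anti-automorphism, one has the identity $\tau\circ c_g=c_{\theta(g)}\circ\tau$, where $\theta(g)=(g^t)^{-1}$ is an involutive automorphism of $\mathrm{GL}_n$; pushing this forward and feeding in $\tau_*\xi=\xi$ together with $(c_g)_*\xi=\xi$ gives $(c_{\theta(g)})_*\xi=\xi$ for every $g\in P_n$. Thus $\xi$ is invariant under conjugation by $\theta(P_n)$, which is precisely the ``opposite mirabolic'' subgroup $Q_n$ of matrices whose last column is $(0,\dots,0,1)^t$ (indeed $\theta(P_n)=(P_n^t)^{-1}=P_n^t=Q_n$).

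It then remains to check that $P_n$ and $Q_n$ together generate $\mathrm{GL}_n$ (for $n=1$ there is nothing to prove), which yields the $\mathrm{GL}_n$-invariance of $\xi$ at once. This last point is elementary: both subgroups contain $\mathrm{GL}_{n-1}$, while $P_n$ supplies the root subgroups $I+\F e_{i,n}$ and $Q_n$ the root subgroups $I+\F e_{n,i}$ for $i<n$; each such pair generates a copy of $\mathrm{SL}_2$ acting in the $(i,n)$-coordinate plane, and conjugating these by $\mathrm{GL}_{n-1}$ produces the full diagonal torus and all remaining root subgroups, hence all of $\mathrm{GL}_n$. I do not expect a genuine obstacle here, as the substantive content of the paper lies in Theorem~\ref{goal}; the only points requiring mild care are the bookkeeping with the involution $\theta$ in the second step and treating the degenerate case $n=1$ separately.
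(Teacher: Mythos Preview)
Your argument is correct and is precisely the classical Bernstein deduction the paper has in mind: the paper does not spell out a proof of Theorem~\ref{Kir} at all, but simply declares it ``an important direct consequence of Theorem~\ref{goal}'' and refers to \cite{Ber}. Your three steps---invariance under transposition from Theorem~\ref{goal} for $(\mathrm{GL}_n,\mathrm{GL}_{n-1})$, transport via $\theta(g)=(g^t)^{-1}$ to obtain $Q_n$-invariance, and the elementary generation $\langle P_n,Q_n\rangle=\mathrm{GL}_n$---constitute exactly that consequence made explicit, so there is nothing to add or correct.
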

This theorem can be used to prove Kirillov's conjecture, as well as the following theorem (see \cite{Ber} for both).
\begin{corollary}
the Bernstein-Zelevinsky product of two irreducible unitary representations of $\mathrm{GL}(n,\F)$ and $\mathrm{GL}(m,\F)$ is an irreducible unitary representation of $\mathrm{GL}(n+m,\F)$.
\end{corollary}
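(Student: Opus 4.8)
The statement is due to Bernstein \cite{Ber}; the plan is to carry out his argument, the point being that its only ingredient specific to fields of characteristic zero is the assertion that $P_n$-conjugation-invariant distributions on $\mathrm{GL}(n,\F)$ are $\mathrm{GL}(n,\F)$-invariant, which is now available over our $\F$ as Theorem \ref{Kir}; everything else (smooth representations, admissibility, Bernstein--Zelevinsky derivatives, normalized parabolic induction, the geometric lemma, and the relevant distribution theory) is insensitive to the characteristic. So let $\pi_1,\pi_2$ be irreducible unitarizable smooth representations of $\mathrm{GL}(n,\F),\mathrm{GL}(m,\F)$, and let $\sigma = \pi_1\times\pi_2 = \operatorname{Ind}_P^{\mathrm{GL}(n+m,\F)}(\pi_1\boxtimes\pi_2)$ be their Bernstein--Zelevinsky product, i.e.\ normalized parabolic induction from the standard parabolic $P$ with Levi $\mathrm{GL}(n,\F)\times\mathrm{GL}(m,\F)$. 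Normalized parabolic induction preserves unitarizability, so $\sigma$ is unitary; and $\sigma$ has finite length, hence is a finite direct sum of irreducible unitarizable representations. The task is therefore to prove that this sum has a single summand, occurring with multiplicity one.

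I would next pass to the mirabolic subgroup $P_{n+m}\subset\mathrm{GL}(n+m,\F)$, using Theorem \ref{Kir} in the form of the ($p$-adic) Kirillov conjecture and its refinements: for irreducible unitarizable representations $\tau,\tau'$ of $\mathrm{GL}(k,\F)$ one has a natural isomorphism $\operatorname{Hom}_{P_k}(\tau|_{P_k},\tau'|_{P_k})\cong\operatorname{Hom}_{\mathrm{GL}(k,\F)}(\tau,\tau')$. The inequality $\dim\operatorname{Hom}_{P_k}(\tau|_{P_k},\tau'|_{P_k})\le\dim\operatorname{Hom}_{\mathrm{GL}(k,\F)}(\tau,\tau')$ is where Theorem \ref{Kir} enters, through a Gelfand--Kazhdan-type argument: an intertwiner $\tau|_{P_k}\to\tau'|_{P_k}$ produces, via matrix coefficients, a $P_k$-conjugation-invariant distribution on $\mathrm{GL}(k,\F)$, which Theorem \ref{Kir} upgrades to a $\mathrm{GL}(k,\F)$-conjugation-invariant distribution and hence to a $\mathrm{GL}(k,\F)$-intertwiner; the reverse inequality is Schur's lemma. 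In particular $\tau|_{P_k}$ is irreducible. Since $\sigma$ is a finite direct sum of irreducible unitarizable representations and $\sigma|_{P_{n+m}}$ is the corresponding direct sum of their (pairwise non-isomorphic, irreducible) restrictions, $\sigma$ is irreducible if and only if $\sigma|_{P_{n+m}}$ is irreducible.

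It remains to prove that $\sigma|_{P_{n+m}}$ is irreducible. Here one analyzes the restriction of the parabolically induced representation $\sigma$ to $P_{n+m}$ by the Bernstein--Zelevinsky geometric lemma (a decomposition into $P\backslash\mathrm{GL}(n+m,\F)/P_{n+m}$ orbits): $\sigma|_{P_{n+m}}$ acquires a finite filtration whose graded pieces are obtained by applying the exact functors $\Phi^+,\Psi^+$ and compact induction to the representations $\pi_1^{(i)}\boxtimes\pi_2^{(j)}$ built from Bernstein--Zelevinsky derivatives. Using the irreducibility of $\pi_1|_{P_n}$ and $\pi_2|_{P_m}$ together with the adjunction and exactness properties of these functors, one shows that this glued representation has no proper nonzero $P_{n+m}$-subrepresentation. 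Then $\sigma|_{P_{n+m}}$, and hence $\sigma$, is irreducible, and $\sigma$ is unitary by construction, which is the assertion. I expect this last step to be the main obstacle: it is the technical heart of \cite{Ber}, combining the combinatorics of the derivative filtration of $(\pi_1\times\pi_2)|_{P_{n+m}}$ with the proof that the resulting gluing of irreducibles stays irreducible; a secondary point needing care throughout is that finite length and unitarizability are preserved under restriction to the mirabolic, which is what makes the ``semisimple plus Schur'' reductions used above legitimate.
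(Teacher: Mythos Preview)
The paper does not supply its own proof of this corollary; it simply refers the reader to \cite{Ber}, the point being that Theorem~\ref{Kir} furnishes the one ingredient of Bernstein's argument that previously required characteristic zero. Your outline is a faithful sketch of Bernstein's proof and is exactly what the paper intends, so there is nothing to correct.
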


\emph{Remarks.}
\begin{enumerate}
\item Theorems \ref{mult1} and \ref{Kir} were already known using other methods (see \cite{AAG} for Theorem \ref{mult1} and \cite{Ber} for \ref{Kir}).

\item The starting point of this work was a discussion between my advisor and Guy Henniart in Summer 2017.

\item This whole paper is heavily influenced by the proof of an analog to Theorem \ref{goal}, for local fields of characteristic 0. This is given in \cite{AGRS}. We follow the exposition in \cite{0-char} (one may compare this paper to \cite{0-char} and find the many similarities and identical parts).\\
\end{enumerate}

%\emph{Remark.}
%The work on this problem was started by Guy Henniart, who has never published his work on it, and has given his approval for the publication of this paper.
\subsection{Sketch of Proof}

We prove our theorem by induction on $n$.% Since the induction step for $p\nmid n$ is proven identically to the case where $\F$ is of characteristic 0 (as done in \cite{0-char}, for example) we focus on the case $p|n$.

We use the notations $V=\F^n$, $\tG=\mathrm{GL}(V)\rtimes S_2$, where the non-trivial element of $S_2$ acts on $\mathrm{GL}(V)$ by $g\mapsto \ ^tg^{-1}$. Let $\chi$ be the pullback to $\tG$ of the sign character on $S_2$.
We shall denote $gl(V)\times V\times V^*$ by $X$, and by $\Delta:X\to \F[x]$ we will denote the map $(A,v,\phi)\mapsto \mathrm{ch}A$, where $\mathrm{ch}$ denotes the characteristic polynomial map.\\
We think of $X$ as sitting inside $\mathrm{gl}(n+1,\F)$ by matrices whose $n+1,n+1$ entry is 0. This way, we get an action of $G$ on $X$ by conjugation. If we let the generator of $S_2$ act on $X$ by transposition, we get a consistent action of $\tG$. One can easily see that this action is a product of an action on $gl(V)$ and an action on $V\times V^*$.\\
Throughout the proof we will use two powerful tools handling distributions, which we call 'the Localization principle' and 'Frobenius descent'. They are described in Section \ref{Preliminaries}. Morally, what they allow us is to treat distributions in a more geometric way, similar to the way in which one treats functions.\\

The first step of the proof is a reformulation of the theorem \ref{goal}, as the statement that any $(\tG,\chi)$-equivariant distributions on $X$ is 0.
\\The main strategy is to restrict the possible support of a $(\tG,\chi)$-equivariant distributions on $X$. Suppose throughout this sketch of proof that $(A,v,\phi)\in X$ is a point in the support of such a distribution.\\

%\\By a simple argument of the Localization principle and Frobenius descent, we show the characteristic polynomial of $A$ must be a power of an irreducible polynomial.
%\\By a method of Harish-Chandra descent, we prove that the minimal polynomial of $A$ must be either of degree 1, or ramified. This implies $\tr A=0$.

Denote by $\phi v$ the pairing between $\phi$ and $v$. Using Frobenius descent and the Localization principle, we show that necessarily \begin{equation}\label{sketchperp}
    \phi v=0
\end{equation}
\\Next we introduce automorphisms of the problem, which will move the support, and so will restrict the intersection of all possible supports even further:
\begin{itemize}
    \item For $\lambda \in \F$, let $\nu_\lambda:X\to X,$ be the homeomorphism defined by $$\nu_\lambda (A,v,\phi)=(A+\lambda v\otimes \phi,v,\phi)$$
    \item Let $f\in \F (x)^\times$, and fix a fiber $F$ of $\Delta$, at a characteristic polynomial coprime to $f$ (both to the numerator and to the denominator).
    Let $\rho _f:F\to F$ be the homeomorphism defined by $$\rho _f (A,v,\phi)=(A,f(A)v,\phi f(A))$$
\end{itemize}
By $\phi f(A)$ we mean $f(A)^* \phi$, where $f(A)$ is the dual operator to on $V^*$ to $f(A)$.\\
Note that we may think of the second one as an automorphism of the problem using the Localization principle.
\\
%\\Since these automorphism must keep $A$ inside the possible support of a $(\tG,\chi)$-equivariant distribution, we know $0=\tr (A+\lambda v\otimes \phi)=\tr A+\lambda\phi (v)$, thus $\phi (v)=0$.
%Now applying this to $\rho_f (A,v,\phi)$, we get that $\phi f(A) v=0$. Since this is true for a dense set of polynomials $f$, it is true for any polynomial.
\\Since these automorphisms must keep $A$ inside the union of the possible supports of all $(\tG,\chi)$-equivariant distributions, we can apply (\ref{sketchperp}) to $\rho_f (A,v,\phi)$ in order to get $\phi f(A)^2 v=0$. Since this is true for a dense set of polynomials $f$, it is true for all polynomials, and so for any polynomial we also have
$$\phi f(A) v= \frac{\phi(1+f(A))^2v-\phi(f(A))^2v-\phi v}{2}=0$$
\\By a theorem in linear algebra that we shall prove (Theorem \ref{LinAlg} below), this last condition is equivalent to the fact that $\nu_\lambda$ keeps $(A,v,\phi)$ inside the same fiber of $\Delta$. Denote by $R$ the subset of $X$ satisfying this condition.
\\
\\Localizing to the fiber of some $g\in\F[x]$ with respect to $\Delta$, we can use a method of stratification. Denote by $P_i$ the union of all $\tG$-orbits of dimension at most $i$ of matrices with characteristic polynomial $g$. Let $R_i:=(P_i\times V \times V^*)\cap R$. For any open orbit $O$ of $P_i$ set
$$\tO:=(O\times V \times V^*)\cap\bigcap_{\lambda\in\F}\nu_\lambda^{-1}(R_i)$$
\\Matrices with characteristic polynomial $g$ consist of finitely many orbits, and so our strategy will be to show by downward induction that $A\in P_i$.
\\The induction basis is clear (for large enough $i$), because of finiteness. For the induction step, it is enough to restrict to one of the open $\tG$ orbits in $P_i$, say $O$, and show that the only $(\tG,\chi)$-equivariant distribution on $\tO$ is $0$.

At this point, we bring in Fourier transform. We use $\Fou_{V \oplus V^*}$ to denote the Fourier transform on the $V\oplus V^*$ coordinates with respect to the bilinear form induced by the quadratic form $Q((v,\phi)):=\langle \phi , v\rangle
:=\phi(v)$.

%To finish the proof of the induction step, it suffices to prove the following claim, which morally tells us that there are no $(\tG,\chi)$-equivariant distributions supported on $O$:
We formulate a sufficient condition for the induction step, and from now on we will focus on proving it:

\begin{claim} \label{sk_orbit}
Let $O$ be an open $\tG$ orbit of $P_i$. Suppose $\xi$ is a $(\tG,\chi)$-equivariant distribution on $\Delta^{-1}(g)$ such that
$$\mathrm{supp}(\xi) \ss \tO$$
and
$$\mathrm{supp}(\Fou_{V \oplus V^*}(\xi)) \ss \tO$$
Then $\xi=0$.
\end{claim}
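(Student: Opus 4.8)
The plan is to prove Claim \ref{sk_orbit} by reducing it, via Frobenius descent, to a question about distributions on a single fibre of the projection $\pr\colon\Delta^{-1}(g)\to O$, and then to settle that question by a case analysis on the structure of a matrix $A\in O$ viewed as an endomorphism of $V$. Fix such an $A$. Since $\tG$ acts transitively on $O$ we may write $O\cong \tG/\tG_A$, where $\tG_A\ss\tG$ is the stabiliser of $A$; because $A$ is $\mathrm{GL}(V)$-conjugate to $\,^tA$, the group $\tG_A$ consists of $Z_{\mathrm{GL}(V)}(A)$ together with one further coset, represented by an element $\tau$ whose action on the fibre $\{A\}\times V\times V^*\cong V\times V^*$ is an involution interchanging the two summands up to the pairing $Q$, and $\chi(\tau)=-1$. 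Applying Frobenius descent to the $\tG$-fibration $O\times V\times V^*\to O$ identifies $(\tG,\chi)$-equivariant distributions on $O\times V\times V^*$ with $(\tG_A,\chi)$-equivariant distributions $\eta$ on $V\times V^*$; and since $\Fou_{V\oplus V^*}$ acts only along the fibres, it commutes with this identification. Writing $S:=\tO\cap(\{A\}\times V\times V^*)$, Claim \ref{sk_orbit} thus reduces to showing that any $(\tG_A,\chi)$-equivariant distribution $\eta$ on $V\times V^*$ with $\mathrm{supp}(\eta)\ss S$ and $\mathrm{supp}(\Fou_{V\oplus V^*}(\eta))\ss S$ is zero.

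The next step is to describe $S$ explicitly. Unwinding $\tO=(O\times V\times V^*)\cap\bigcap_{\lam\in\F}\nu_\lam^{-1}(R_i)$ over the point $A$, membership $(v,\phi)\in S$ amounts to $\langle\phi,\F[A]v\rangle=0$ — the condition cutting out $R$, via Theorem \ref{LinAlg} — together with the requirement that every rank-one perturbation $A+\lam\,v\otimes\phi$ lie in a $\tG$-orbit of dimension $\le i$ inside $\Delta^{-1}(g)$. Since $O$ is \emph{open} in $P_i$, this last requirement pushes all those perturbations into $\overline O$, which cuts $S$ down to an explicit $Z_{\mathrm{GL}(V)}(A)$-stable subvariety of the isotropic variety $\{Q=0\}=\{\langle\phi,v\rangle=0\}\ss V\times V^*$.

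I would then split according to the factorisation of $g$. If $g$ has at least two distinct irreducible factors, decompose $V=V_1\oplus V_2$ as $\F[A]$-modules with coprime annihilators; then $Z_{\mathrm{GL}(V)}(A)$, the space $V\times V^*$, the set $S$ and $\Fou_{V\oplus V^*}$ all factor accordingly, and a further Frobenius descent together with a Künneth-type argument lets the induction hypothesis — Theorem \ref{goal} in dimension $<n$ in its equivariant-distribution reformulation, applied to the factor carrying the twist $\tau$ — force $\eta=0$. The essential case is $g=p(x)^k$ with $p$ irreducible: here there is a single ``eigenvalue'', $Z_{\mathrm{GL}(V)}(A)$ is as large as it can be, and one argues directly. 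The idea is the harmonic-analysis endgame: $Z_{\mathrm{GL}(V)}(A)$ contains a one-parameter subgroup acting on $V\times V^*$ with weights of a single sign along $S$, and combining this action with $\mathrm{supp}(\eta),\,\mathrm{supp}(\Fou_{V\oplus V^*}\eta)\ss S$ shows that $\eta$ is forced to be a sum of push-forwards of Haar measures from isotropic subspaces of $V\times V^*$; the $\tau$-equivariance with $\chi(\tau)=-1$ then makes those summands cancel, so $\eta=0$. The hypothesis $\mathrm{char}\,\F\ne 2$ enters exactly as in the sketch, through the polarisation identity used to deduce $\phi f(A)v=0$ from $\phi f(A)^2v=0$.

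The main obstacle is the indecomposable case $g=p(x)^k$. In characteristic $0$ the crucial scaling is produced either from an $\mathfrak{sl}_2$-triple attached to the nilpotent part of $A$ or by differentiating under the Fourier transform; in characteristic $p$ neither device is available, since $A$ need not have a Jordan decomposition over $\F$, its minimal polynomial may be inseparable, and there is no ambient Lie-algebra action to generate the torus. I expect the correct substitute to be a purely geometric support-bookkeeping argument in the spirit of the Localization principle: stratify $S$ by $Z_{\mathrm{GL}(V)}(A)$-orbits and run a downward induction on $\dim_{\F}V$ (equivalently, on the length of $V$ as an $\F[A]$-module), reducing to a base case that is a direct computation with distributions on $\F$ or $\F^2$. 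The delicate part — and where the positive-characteristic content of the paper concentrates — is to check that this reduction is uniform in $p$: that the linear-algebra identities behind Theorem \ref{LinAlg} and the $\nu_\lam$- and $\rho_f$-symmetries introduce no denominators divisible by $p$, and that the fibrations used for Frobenius descent stay submersive in characteristic $p$.
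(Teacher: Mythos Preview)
Your reduction via Frobenius descent to a statement about $(\tG_A,\chi)$-equivariant distributions on $V\oplus V^*$ is correct and is exactly what the paper does. Your direct-sum reduction is also close in spirit to the paper's, though the paper passes all the way down to companion matrices via the rational canonical form rather than stopping at the primary decomposition; this is a minor difference.

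The genuine gap is the indecomposable case $g=p(x)^s$, which you yourself flag as ``the main obstacle'' and for which you only offer a speculative outline. Two concrete things are missing.

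First, you do not extract the correct support constraint from $\tO$. The paper observes that if $(A,v,\phi)\in\tO$ then the line $\lambda\mapsto A+\lambda\,v\otimes\phi$ stays in $O$ for a Zariski neighbourhood of $0$, hence $v\otimes\phi$ lies in the Zariski tangent space $[A,\mathrm{gl}(V)]$ to the orbit; this defines the set $Q_A$, and one then proves $Q_A\ss R_A$. Your description of $S$ stops at the $R$-condition together with ``perturbations lie in $\overline O$'', which is not sharp enough to run the later argument.

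Second, and more importantly, the ``harmonic-analysis endgame'' you sketch --- a one-parameter subgroup with weights of a single sign forcing $\eta$ to be a sum of pushforwards of Haar measures --- is not how the argument goes, and I do not see how to make that version work. The paper's key tool is the Rallis--Schiffmann theorem: if both $\eta$ and $\Fou_Q(\eta)$ are supported in the isotropic cone $\{Q=0\}$, then $\eta$ is abs-homogeneous of degree $\tfrac12\dim W$. For $A$ the companion matrix of $f^s$ one introduces the filtration $U_i=f(A)^iV$ and shows $R_A=\bigcup_{i=0}^s U_i\oplus U_{s-i}^*$. On the open stratum $(V\setminus U_1)\oplus V^*$ the support is contained in $V\oplus 0$, where the centralizer action gives abs-homogeneity of degree $0$; Rallis--Schiffmann gives degree $\tfrac12\dim(V\oplus V^*)>0$, a contradiction unless the restriction vanishes. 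Iterating forces $\eta$ to be supported (and Fourier-supported) in $U_1\oplus U_1^*$, hence invariant under translation by $U_{s-1}\oplus U_{s-1}^*$, and one descends to the companion matrix of $f^{s-2}$ and concludes by induction on $s$.

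In particular, your worry that the characteristic-$0$ proof relies on $\mathfrak{sl}_2$-triples or differentiation is misplaced: the Rallis--Schiffmann argument is purely representation-theoretic (Weil/metaplectic) and goes through verbatim for $\mathrm{char}\,\F\neq 2$. No Jordan decomposition, no separability of the minimal polynomial, and no Lie-algebra action is needed.
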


We shall use the helpful notations $$Q_A:=\{(v,\phi)\in V\oplus V^*|v\otimes\phi\in [A,\mathrm {gl} (V)]\}$$ and $$R_A:=\{(v,\phi)\in V\oplus V^*|\forall k\geq 0,\ \phi A^k v=0\}=\{(v,\phi)\in V\oplus V^*|(A,v,\phi)\in R\}$$
%By what we already proved, any point in the support satisfies $(v,\phi)\in R_A$. 
We show that $Q_A\ss R_A$, and that if $(A,v,\phi)\in \tO$, as we assume it to be, then $(v,\phi)\in Q_A$.
\\We prove $Q_{A_1}\oplus Q_{A_2}\ss Q_{A_1\oplus A_2}$, which allows us (using the Localization principle and Frobenius descent) to reduce claim \ref{sk_orbit} to the case that $A$ is a companion matrix (See definition \ref{comp} below), and show that there are no distributions on $R_A$ (and so in particular on $Q_A)$ which are equivariant with respect to the centralizer of $A$ inside $\tG$ (call it $\widetilde{C}_A$) with character $\chi$.
\\
\\For this, assume $\mathrm{ch}A=f(x)^s$, where $f\in F[x]$ is irreducible. We introduce the descending filtration $U_i:=f(A)^iV$, and the dual descending filtration $U_i^*:=U_{s-i}^\perp=f(A^*)^i V^*$ on $V^*$.
\\We prove that $R_A=\bigcup_{i=0}^s U_i\oplus U_{s-i}^*$, and this will allow us to prove our claim using a theorem of Rallis and Schiffmann (Theorem \ref{Metaplectic} below).
\\This theorem states that given a distribution $\xi$ on a vector space $W$ with a quadratic form $Q$, such that both the support of $\xi$ and the support of $\Fou_Q\xi$ are inside the zeros of $Q$, then $\xi$ is 'abs-homogeneous' of degree $\frac{1}{2}\dim W$ (see Definition \ref{abs-hom} below).
\\We use this theorem for the restriction of our distribution to $U_0\oplus U_s$, where $(\tC_A,\chi)$-equivariance implies 'abs-homogeneity' of degree $0$ (which is just invaraince to homothety), and so we know this restriction is 0.
This method will allow us to reduce to the statement for smaller $s$ and finish by induction.

\subsection{Related Results}

The result of this paper (along with the discussed consequences) is already known for non-archimedean local fields of characteristic $0$ (see \cite{AGRS}) and for the fields $\R$ and $\C$ (see \cite{AG-R,SZ}).\\
For finite fields, however, Theorems \ref{goal} and \ref{mult1} are not true.
There is a weaker result than Theorem \ref{mult1} that is known for all local and finite fields, and is shown in \cite{AG-Jacquet}.
Another weaker result was shown for all local fields of arbitrary characteristic in \cite{AGS}.\\
A possible direction to continue the work of this paper would be to prove an analog result to theorem \ref{goal} for orthogonal and unitary groups. It is more than likely that proofs of such statements would rely on this result for $\mathrm{GL}(n)$, as this was the case with local fields of $0$ characteristic (see \cite{AGRS,SZ}).\\
Theorem \ref{goal} is conjectured to be true also if $\F$ is a non-archimedean local field of characteristic 2, and it seems that this case requires some more thought.

\subsection{Acknowledgements}
I would like to deeply thank my advisor, {\bf Dmitry Gourevitch}, for guiding me through this project, for exposing me to this fascinating area of mathematics, and for teaching me the required background in mathematics. I would also like to thank him for the exceptional availability and willingness to help throughout the process.\\
I deeply thank {\bf Guy Henniart} for sharing his work on this problem with us.
\\I would also like to thank {\bf Avraham Aizenbud} for his help along the way.
\\I want to thank my friends with whom I have discussed this work and who have given me helpful feedback, among which are {\bf Shachar Carmeli}, {\bf Guy Kapon}, and {\bf Guy Shtotland}.
\\In addition, I would like to thank {\bf Lev Radzivilovsky}, {\bf Shachar Carmeli}, and {\bf Guy Kapon} for teaching me a huge amount of mathematics in the past and in the present.

D.M. was partially supported by ERC StG grant 637912.
\section {Preliminaries} \label{Preliminaries}

We will use the standard terminology of $l$-spaces introduced in
\cite{BZ}, section 1. We denote by $\Sc(Z)$ the space of Schwartz
functions on an $l$-space $Z$, and by $\Sc^*(Z)$ the space of
distributions on $Z$ equipped with the weak topology.

\begin {notation} 
When we have a vector space $V$, we denote $\mathrm{End}(V)=\mathrm{Hom}(V,V)$ by $gl(V)$. We also use $\mathrm{gl}_n$ and $\mathrm{gl}(n,\F)$ in the same sense. Given $v\in V,\phi\in V^*$, we use $\phi v$ to denote the pairing usually denoted by $\langle\phi,v\rangle$ or by $\phi(v)$. Similarly, if we also have $A\in \mathrm{gl}(V)$, we use the notation $\phi A v=\langle\phi,Av\rangle$. We also use the notation $\phi A=A^*\phi$. These notations are consistent with matrix multiplication.\\
Another notation we use is $v\otimes \phi\in \mathrm{gl}(V)$, which is defined by $(v\otimes \phi) (u):=\phi(u)v$.
\end{notation}

%We fix a nontrivial additive character $\psi$ of $\F$.
%
\begin{notation} [Fourier transform]
Let $W$ be a finite dimensional vector space over $\F$. Let  $B$ be
a nondegenerate symmetric bilinear form on $W$. We denote by
$\Fou_B:\Sc^*(W) \to \Sc^*(W)$ the Fourier transform defined using
$B$ and the self-dual measure on $W$.

By abuse of notation, we also denote by $\Fou_B$ the partial Fourier
transform $\Fou_B:\Sc^*(Z \times W) \to \Sc^*(Z\times W)$ for any
$l$-space $Z$.

If $W=U \oplus U^*$ then it has a canonical symmetric bilinear form
given by the quadratic form $Q((v,\phi)):=\phi v$. We will denote the Fourier transform defined by it
simply by $\Fou_W$. If $W$ is clear from the context, we sometimes ommit it from the notation and denote  $\Fou=\Fou_W$.
\end{notation}
\begin{proposition} \label{2Four}
Let $W_1 \oplus W_2$ be finite dimensional vector spaces. Let $B_1$
and $B_2$ be nondegenerate symmetric bilinear forms on $W_1$ and
$W_2$ respectively. Let $Z \subset W_1$ be a closed subset. Let $\xi
\in \Sc^*(W_1 \oplus W_2)$ be a distribution. Suppose that
$\Fou_{B_1 \oplus B_2}(\xi)$ is supported in $Z\times W_2$. Then
$\Fou_{B_1}(\xi)$ is also supported in $Z\times W_2$.
\end{proposition}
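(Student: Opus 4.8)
The statement asserts that if the full Fourier transform $\Fou_{B_1\oplus B_2}(\xi)$ is supported in $Z\times W_2$, then already the partial Fourier transform $\Fou_{B_1}(\xi)$ is supported there. The plan is to exploit the multiplicativity of the Fourier transform with respect to direct sums of quadratic spaces: we have $\Fou_{B_1\oplus B_2}=\Fou_{B_2}\circ\Fou_{B_1}$ (partial Fourier transforms in the two coordinate blocks commute, since they act on disjoint sets of variables), so $\Fou_{B_1}(\xi)=\Fou_{B_2}\bigl(\Fou_{B_1\oplus B_2}(\xi)\bigr)$, using that $\Fou_{B_2}$ is (up to the sign-of-the-argument involution, which is harmless here) its own inverse. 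Thus the question reduces to: if a distribution $\eta:=\Fou_{B_1\oplus B_2}(\xi)$ on $W_1\oplus W_2$ is supported in $Z\times W_2$, is $\Fou_{B_2}(\eta)$ also supported in $Z\times W_2$?

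First I would reduce to the local statement. Fix a point $w_1\notin Z$; since $Z$ is closed, there is a compact open neighbourhood $K\subseteq W_1\setminus Z$ of $w_1$. I want to show $\Fou_{B_2}(\eta)$ vanishes on $K\times W_2$. Choose $f\in\Sc(W_1)$ supported in $K$; then the partial pairing $\langle\eta, f\otimes -\rangle$ — more precisely the pushforward-type operation giving a distribution on $W_2$ obtained by ``integrating $\eta$ against $f$ in the $W_1$-variable'' — is the zero distribution on $W_2$, because $\eta$ is supported away from $K$ in the $W_1$-direction. Here I would use the standard fact for $l$-spaces that a distribution on $W_1\times W_2$ supported off $K\times W_2$ pairs trivially with any Schwartz function of the form $f\otimes h$ with $\operatorname{supp}(f)\subseteq K$; equivalently, restriction of $\eta$ to the open set $K\times W_2$ is zero.

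Next, the partial Fourier transform $\Fou_{B_2}$ acts only on the $W_2$-coordinate and commutes with multiplication by functions pulled back from $W_1$ and with restriction to sets of the form $(\text{open}\subseteq W_1)\times W_2$. Hence $\Fou_{B_2}(\eta)$ restricted to $K\times W_2$ equals $\Fou_{B_2}$ applied to $\eta|_{K\times W_2}=0$, so it is $0$ on $K\times W_2$, in particular near $w_1$. Since $w_1\notin Z$ was arbitrary, $\Fou_{B_2}(\eta)$ is supported in $Z\times W_2$, and by the identity of the first paragraph this distribution is exactly $\Fou_{B_1}(\xi)$ (up to the involution $w_2\mapsto -w_2$ on $W_2$, which preserves sets of the form $Z\times W_2$). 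This completes the argument.

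The only genuinely delicate point is making the manipulation ``$\Fou_{B_2}$ commutes with restriction to $K\times W_2$ and with the reasoning that $\eta|_{K\times W_2}=0$'' fully rigorous at the level of distributions on $l$-spaces rather than functions; all of this is standard (it is implicit in the way partial Fourier transforms are set up in \cite{BZ}-style $l$-space analysis), but it is where care is needed. The algebraic identity $\Fou_{B_1\oplus B_2}=\Fou_{B_1}\circ\Fou_{B_2}$ and the involutivity of $\Fou_{B_2}$ are formal consequences of the definitions and present no obstacle.
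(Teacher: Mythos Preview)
Your argument is correct and follows essentially the same route as the paper: both proofs use the factorization $\Fou_{B_1\oplus B_2}=\Fou_{B_2}\circ\Fou_{B_1}$ and the fact that the partial transform $\Fou_{B_2}$ does not move the $W_1$-projection of the support. The paper simply asserts this last fact in one line, whereas you unpack it via the localization argument on $K\times W_2$; the content is the same.
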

\begin{proof}
Let $p_1$ denote the projection $W_1\oplus W_2 \to
W_1$. Since $\Fou_{B_2}$ does not change the projection of the
support of a  distribution to $W_1$,
$$p_1(\mathrm{Supp}(\Fou_{B_1}(\xi)))=p_1(\mathrm{Supp}(\Fou_{B_2} \circ
\Fou_{B_1}(\xi)))=p_1(\Fou_{B_1 \oplus B_2}(\xi)) \subset Z$$
\end{proof}

For the next theorem, Let  $q:Z \to T$ be a continuous map of $l$-spaces. We can consider $\Sc ^*(Z)$ as $\Sc(T)$-module. Denote $Z_t:=
q^{-1}(t)$.
\begin{theorem}[Localization principle, see \cite{Ber}, section 1.4] \label{LocPrin}
For any $M$ which is a closed linear subspace and $\Sc(T)$-submodule of $\Sc^*(Z)$,
$$M=\overline{\bigoplus_{t \in T} (M \cap \Sc^*(Z_t))}$$.
\end{theorem}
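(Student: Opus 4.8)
The plan is to prove the two inclusions separately. The inclusion $M\supseteq\overline{\bigoplus_{t}(M\cap\Sc^*(Z_t))}$ is immediate, since each $M\cap\Sc^*(Z_t)$ lies in the linear subspace $M$, hence so does the sum $\bigoplus_{t}(M\cap\Sc^*(Z_t))$, and $M$ is closed. The content is the reverse inclusion: every $\xi\in M$ should be a weak limit of finite sums of distributions that lie in $M$ and are supported on single fibres $Z_t$. As $\Sc^*(Z)$ carries the weak topology, whose dual space is $\Sc(Z)$, the Hahn--Banach theorem reduces this to showing: if $f\in\Sc(Z)$ annihilates $M\cap\Sc^*(Z_t)$ for every $t\in T$, then $f\in M^\perp$.

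I would set $L:=M^\perp\ss\Sc(Z)$; it is closed (being an annihilator), it is an $\Sc(T)$-submodule (because $M$ is), and $M=L^\perp$ (because $M$ is weakly closed). Let $\pi_t\colon\Sc(Z)\to\Sc(Z_t)$ denote restriction of functions. Two facts are needed. (i) For $h\in\Sc(Z)$, $h$ vanishes on $Z_t$ if and only if $h$ vanishes on $q^{-1}(U)$ for some compact open $U\ni t$: indeed $h$ is locally constant with compact support, so $q(\operatorname{supp}h)$ is compact, hence closed (as $T$ is Hausdorff), and misses $t$ precisely when $h|_{Z_t}=0$; now separate $t$ from it by a compact open set. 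In particular $\mm_t\Sc(Z)=\ker\pi_t$, so dually $\Sc^*(Z_t)=(\ker\pi_t)^\perp$ and $M\cap\Sc^*(Z_t)=L^\perp\cap(\ker\pi_t)^\perp=(L+\ker\pi_t)^\perp$. (ii) The key point: for any closed subspace $L\ss\Sc(Z)$ the image $\pi_t(L)$ is closed in $\Sc(Z_t)$. This is because the topology on $\Sc(Z_t)$ is the inductive limit topology along its finite-dimensional subspaces $\F^{\mathcal P}$, consisting of the functions that are constant on the blocks of a finite partition $\mathcal P$ of a compact open subset of $Z_t$ into compact open sets; hence a subset of $\Sc(Z_t)$ is closed once it meets every $\F^{\mathcal P}$ in a closed set, and $\pi_t(L)\cap\F^{\mathcal P}$, being a linear subspace of the finite-dimensional space $\F^{\mathcal P}$, is closed.

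Granting (i) and (ii), I would conclude as follows. By (ii), $L+\ker\pi_t=\pi_t^{-1}(\pi_t(L))$ is closed, so by the bipolar theorem any $f$ annihilating $M\cap\Sc^*(Z_t)=(L+\ker\pi_t)^\perp$ lies in $L+\ker\pi_t$; hence $\pi_t(f)\in\pi_t(L)$, so there is $g_t\in L$ with $(g_t-f)|_{Z_t}=0$, and by (i) $g_t=f$ on $q^{-1}(U_t)$ for some compact open $U_t\ni t$. Covering the compact set $q(\operatorname{supp}f)$ by finitely many such $U_{t_1},\dots,U_{t_m}$, refining to a disjoint cover by compact open sets $V_i\ss U_{t_{j(i)}}$, and putting $g:=\sum_{i}(1_{V_i}\circ q)\,g_{t_{j(i)}}$, one gets $g\in L$ (since $1_{V_i}\in\Sc(T)$ and $L$ is an $\Sc(T)$-submodule) and, checking separately on each $q^{-1}(V_i)$ and on the complement of $\bigcup_i q^{-1}(V_i)$, that $g=f$; thus $f\in L=M^\perp$, as required. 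I expect the only genuine difficulties to be the key point (ii) and the accompanying routine verification that the natural topologies on these Schwartz spaces are compatible with the dualities used, so that Hahn--Banach and the bipolar theorem apply as invoked; the concluding gluing is elementary. If one wishes to minimise the functional analysis, one can first decompose $T$ into a disjoint union of compact open sets --- which $\Sc(T)$ detects through their characteristic functions --- and so reduce to the case of compact $T$.
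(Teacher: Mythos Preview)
The paper does not give a proof of this theorem; it is quoted from \cite{Ber}, section~1.4, and used as a black box. So there is no ``paper's own proof'' to compare against. Your argument is correct and is essentially the standard one going back to Bernstein: reduce via Hahn--Banach to showing that any $f\in\Sc(Z)$ annihilating each $M\cap\Sc^*(Z_t)$ lies in $M^\perp$, then use the $\Sc(T)$-module structure to glue local witnesses $g_t\in L=M^\perp$ into a global one.

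Two small remarks. First, your point~(ii) is in fact automatic and needs no inductive-limit argument: in the Bernstein--Zelevinsky setting $\Sc^*(Z_t)$ is the \emph{full} algebraic dual of $\Sc(Z_t)$, so for any linear subspace $N\subset\Sc(Z_t)$ one has $N^{\perp\perp}=N$ by plain linear algebra (extend a functional vanishing on $N$ and nonzero at a chosen $f\notin N$). Thus $L+\ker\pi_t$ is ``closed'' in the only sense needed, and the bipolar step goes through with no topological subtlety. Second, a notational slip: Schwartz functions and distributions here are $\C$-valued, so your finite-dimensional pieces should be $\C^{\mathcal P}$ rather than $\F^{\mathcal P}$; the local field $\F$ is only the space on which the functions live.
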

Informally, it means that in order to prove a certain property of
distributions on $Z$ it is enough to prove that distributions on
every fiber $Z_t$ have this property.
\begin{corollary} \label{LocPrinCor}
Let $q:Z \to T$ be a continuous map of $l$-spaces. Let an $l$-group
$H$ act on an $l$-space $Z$ preserving the fibers of $q$. Let $\mu$
be a character of $H$. Suppose that for any $t\in T$,
$\Sc^*(q^{-1}(t))^{H,\mu}=0$. Then $\Sc^*(Z)^{H,\mu}=0$
\end{corollary}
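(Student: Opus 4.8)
The plan is to apply the Localization principle (Theorem \ref{LocPrin}) to the subspace $M := \Sc^*(Z)^{H,\mu}$ of $(H,\mu)$-equivariant distributions on $Z$. For this I first need to check that $M$ satisfies the two hypotheses of Theorem \ref{LocPrin}: that it is a closed linear subspace of $\Sc^*(Z)$, and that it is an $\Sc(T)$-submodule, where $\Sc^*(Z)$ carries the $\Sc(T)$-module structure given by $f \cdot \xi := (f \circ q)\,\xi$.

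Closedness and linearity are immediate: $M = \bigcap_{h \in H}\Ker\big(\xi \mapsto h\cdot\xi - \mu(h)\xi\big)$ is an intersection of kernels of continuous linear maps, hence a closed linear subspace of $\Sc^*(Z)$. For the module structure the key point is the hypothesis that $H$ preserves the fibers of $q$: this forces $f \circ q$ to be an $H$-invariant function for every $f \in \Sc(T)$, so multiplication by $f\circ q$ commutes with the $H$-action on $\Sc^*(Z)$ and in particular carries $(H,\mu)$-equivariant distributions to $(H,\mu)$-equivariant ones. Thus $f\cdot\xi \in M$ whenever $\xi \in M$, so $M$ is an $\Sc(T)$-submodule.

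Now Theorem \ref{LocPrin} yields $M = \overline{\bigoplus_{t\in T}(M \cap \Sc^*(Z_t))}$ with $Z_t = q^{-1}(t)$. It remains to identify each summand. An element of $M \cap \Sc^*(Z_t)$ is an $(H,\mu)$-equivariant distribution on $Z$ supported on $Z_t$; since $H$ preserves the closed subset $Z_t$, the standard identification of distributions on $Z$ supported on a closed $l$-subspace with distributions on that subspace is $H$-equivariant, so this space is $\Sc^*(q^{-1}(t))^{H,\mu}$, which vanishes by assumption. Hence every summand is $0$, so $M = \overline{\{0\}} = 0$, which is exactly the assertion.

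The argument is essentially formal once the Localization principle is available; the only place requiring a little care is the last step, namely checking that a distribution on $Z$ supported on $Z_t$ genuinely corresponds to a distribution on $Z_t$ and that this correspondence respects the $H$-action (here one uses that points, and hence fibers, are closed in an $l$-space, and that pushforward along a closed embedding is an isomorphism onto the distributions supported there). I do not anticipate a genuine obstacle: all the content of the corollary sits in Theorem \ref{LocPrin}, and this is merely its specialization to equivariant distributions.
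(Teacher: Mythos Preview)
Your proof is correct and is exactly the intended derivation: the paper states this corollary immediately after the Localization principle without giving a separate proof, and your argument---take $M=\Sc^*(Z)^{H,\mu}$, check it is a closed $\Sc(T)$-submodule using that $f\circ q$ is $H$-invariant, apply Theorem~\ref{LocPrin}, and identify $M\cap\Sc^*(Z_t)$ with $\Sc^*(q^{-1}(t))^{H,\mu}$---is precisely the routine verification the paper leaves to the reader.
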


\begin{corollary} \label{Product}
Let $H_i \subset \widetilde{H}_i$ be $l$-groups acting on $l$-spaces
$Z_i$ for $i=1, \ldots, k$. Suppose that
$\Sc^*(Z_i)^{H_i}=\Sc^*(Z_i)^{\widetilde{H}_i}$ for all $i$. Then
$\Sc^*(\prod Z_i)^{\prod H_i}=\Sc^*(\prod Z_i)^{\prod
\widetilde{H}_i}$.
\end{corollary}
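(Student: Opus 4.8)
The plan is to reduce the statement first to the case $k=2$ and then to the case of a single acting group. For the reduction to $k=2$, I would induct on $k$, writing $\prod_{i=1}^{k}Z_i=\bigl(\prod_{i=1}^{k-1}Z_i\bigr)\times Z_k$; the induction hypothesis is precisely that $\Sc^*(\prod_{i<k}Z_i)^{\prod_{i<k}H_i}=\Sc^*(\prod_{i<k}Z_i)^{\prod_{i<k}\widetilde H_i}$, so the two-factor case applies to $\bigl(\prod_{i<k}Z_i\bigr)\times Z_k$ with the groups $\prod_{i<k}H_i\subseteq\prod_{i<k}\widetilde H_i$ and $H_k\subseteq\widetilde H_k$. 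For $k=2$ I would use the elementary identity $\Sc^*(Z_1\times Z_2)^{H_1\times H_2}=\Sc^*(Z_1\times Z_2)^{H_1}\cap\Sc^*(Z_1\times Z_2)^{H_2}$ (and the same with the $\widetilde H_i$), where now $H_1$ acts through the first coordinate only and $H_2$ through the second coordinate only, valid because $(h_1,e)$ and $(e,h_2)$ generate $H_1\times H_2$. Hence it suffices to prove the single-group statement: if $\Sc^*(Z_1)^{H_1}=\Sc^*(Z_1)^{\widetilde H_1}$, then $\Sc^*(Z_1\times Z_2)^{H_1}=\Sc^*(Z_1\times Z_2)^{\widetilde H_1}$ for the action on the first coordinate, and symmetrically for the second; intersecting the two conclusions yields the corollary.

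To prove this single-group statement I would apply the Localization principle, Theorem \ref{LocPrin}, to the projection $q\colon Z_1\times Z_2\to Z_2$, whose fibers $q^{-1}(z_2)=Z_1\times\{z_2\}$ are preserved by the $H_1$-action. Set $M:=\Sc^*(Z_1\times Z_2)^{H_1}$. This is a closed linear subspace (invariance under a continuous action of an $l$-group is a weakly closed condition), and it is an $\Sc(Z_2)$-submodule because the $\Sc(Z_2)$-module structure on $\Sc^*(Z_1\times Z_2)$ is multiplication by the locally constant functions $q^{*}f$, $f\in\Sc(Z_2)$, which are $H_1$-invariant and which preserve $\Sc(Z_1\times Z_2)$ under multiplication. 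Under the canonical identification $Z_1\times\{z_2\}\cong Z_1$ one has $M\cap\Sc^*(q^{-1}(z_2))=\Sc^*(Z_1)^{H_1}$, which by hypothesis equals $\Sc^*(Z_1)^{\widetilde H_1}$ and therefore lies inside $\Sc^*(Z_1\times Z_2)^{\widetilde H_1}$ (a fiber-supported distribution is $\widetilde H_1$-invariant on $Z_1\times Z_2$ iff it is so on $Z_1$). Since $\Sc^*(Z_1\times Z_2)^{\widetilde H_1}$ is closed, Theorem \ref{LocPrin} gives $M=\overline{\bigoplus_{z_2}\bigl(M\cap\Sc^*(q^{-1}(z_2))\bigr)}\subseteq\Sc^*(Z_1\times Z_2)^{\widetilde H_1}$, and the reverse inclusion is immediate from $H_1\subseteq\widetilde H_1$.

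I do not expect a genuine obstacle: this is a soft, formal argument, essentially the same reasoning that yields Corollary \ref{LocPrinCor}. The only points that need care are the bookkeeping verifications that $M$ really is an $\Sc(Z_2)$-submodule and that $\Sc^*(-)^{G}$ is weakly closed for a continuous action of an $l$-group $G$, so that the closure produced by the Localization principle stays inside $\Sc^*(Z_1\times Z_2)^{\widetilde H_1}$; the identical proof also works with an equivariance character $\mu$ carried along throughout, should that refinement be needed later. One could instead argue via the fact that distributions on a product are generated by tensor products of distributions on the factors, but routing through Theorem \ref{LocPrin} is cleaner and uses only tools already set up.
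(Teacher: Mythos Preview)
Your argument is correct and is precisely the derivation the paper has in mind: the corollary is stated without proof immediately after the Localization principle and Corollary~\ref{LocPrinCor}, and your reduction to the single-group case followed by an application of Theorem~\ref{LocPrin} to the projection $Z_1\times Z_2\to Z_2$ is exactly how one unpacks that implication.
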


%We will use the following version of the Frobenious descent. It
%can be easily deduced from the Frobenious descent described in
%\cite{Ber}, section 1.5.
For the next theorem, let $H$ be a unimodular $l$-group acting on two $l$-spaces $E$ and $Z$, with the action on $Z$ being transitive. Suppose that we have an $H$-equivariant map $\varphi:E \to Z$. Let $x\in Z$ be a point with a unimodular stabilizer in $H$. Denote by $F$ the fiber of $x$ with respect to $\varphi$.
Then for any character $\mu$ of $H$ the following theorem holds (see \cite{Ber}, section 1.5):
\begin{theorem}[Frobenious descent] \label{Frob} ~\\
%Let a unimodular $l$-group $H$ act transitively on an $l$-space $Z$. Let $\varphi:E \to Z$ be an $H$-equivariant map of $l$-spaces. Let $x\in Z$. Suppose that its stabilizer $\mathrm{Stab}_H(x)$ is unimodular. Let $W$ be the fiber of $x$. Let $\mu$ be a character of $H$. Then\\
%
(i) There exists a canonical isomorphism $\mathrm{Fr}:
\Sc^*(E)^{H,\mu}
\to \Sc^*(F)^{\mathrm{Stab}_H(x),\mu}$.\\
(ii) For any distribution $\xi \in \Sc^*(E)^{H,\mu}$,
$\mathrm{Supp}(\mathrm{Fr}(\xi))=\mathrm{Supp}(\xi)\cap F$.\\
(iii) Frobenious descent commutes with Fourier transform.
\end{theorem}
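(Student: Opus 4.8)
\emph{Proof sketch (plan).} This is a standard fact about $l$-spaces and is proved in \cite{Ber}, section 1.5; here is the line of argument I would follow. First I would use transitivity to identify $Z\cong H/S$ with $S:=\mathrm{Stab}_H(x)$, so that the $H$-equivariant map $\varphi\colon E\to Z$ exhibits $E$ as the associated bundle $E\cong H\times_S F$ with $F=\varphi^{-1}(x)$. The only geometric input is the standard fact that for $l$-groups the quotient map $H\to H/S$ admits a continuous section over some compact open neighbourhood $U$ of $x$, which we normalise to send $x$ to $e$; composing it with the $H$-action gives an isomorphism of $l$-spaces $\sigma_U\colon U\times F\xrightarrow{\ \sim\ }\varphi^{-1}(U)$, $(z,y)\mapsto s(z)\cdot y$. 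Since $H$ and $S=\mathrm{Stab}_H(x)$ are unimodular, $Z=H/S$ carries a nonzero $H$-invariant measure $dz$, which I would fix once and for all.

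Next I would construct the isomorphism $\mathrm{Fr}$. Given $\xi\in\Sc^*(E)^{H,\mu}$, transport $\xi|_{\varphi^{-1}(U)}$ to $U\times F$ through $\sigma_U$; the crux is to show that $H$-equivariance forces this distribution to be of the form $dz\boxtimes\eta$ for a unique $\eta\in\Sc^*(F)$, and that $\eta$ is then automatically $(S,\mu)$-equivariant (the subgroup $S$ acting on $F$ being realised inside the $H$-action). I would set $\mathrm{Fr}(\xi):=\eta$, check independence of the choices of $U$ and of the section $s$ using transitivity of $H$, and build the inverse by spreading a given $\eta\in\Sc^*(F)^{S,\mu}$ back over $Z$ by $dz$ and the $H$-action, the two assignments being mutually inverse. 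A more compact way to package the same computation is to identify $\Sc(E)\cong\mathrm{ind}_S^H\Sc(F)$ as smooth $H$-modules — the modular twist vanishing by unimodularity — and then invoke Frobenius reciprocity $\mathrm{Hom}_H(\mathrm{ind}_S^H\Sc(F),\mu)\cong\mathrm{Hom}_S(\Sc(F),\mu|_S)=\Sc^*(F)^{S,\mu}$.

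Parts (ii) and (iii) are then read off from this local picture. For (ii): $dz$ has full support in the $U$-direction, so under $\sigma_U$ one gets $\mathrm{Supp}(\xi)\cap\varphi^{-1}(U)=U\times\mathrm{Supp}(\eta)$, and intersecting with the fibre $F\cong\sigma_U(\{x\}\times F)$ gives $\mathrm{Supp}(\xi)\cap F=\mathrm{Supp}(\mathrm{Fr}(\xi))$; since $\mathrm{Supp}(\xi)$ is $H$-stable by invariance, nothing is lost by working over $U$. For (iii), in the relevant situation $E=E_0\times W$, $F=F_0\times W$ with $\varphi$ pulled back from $E_0$ and $\Fou=\Fou_W$ the partial Fourier transform in the $W$-coordinate, the trivialisation $\sigma_U$ is the identity on the $W$-factor, so $\Fou_W$ commutes with transport through $\sigma_U$ and acts only on the fibre component $\eta$, which is exactly what $\mathrm{Fr}$ extracts; hence $\mathrm{Fr}\circ\Fou_W=\Fou_W\circ\mathrm{Fr}$.

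The hard part is the claim in the second paragraph: that an $H$-equivariant distribution restricted to the trivialised chart $\varphi^{-1}(U)\cong U\times F$ decomposes as (the invariant measure on the base) $\boxtimes$ (a distribution on the fibre). This is precisely where one must exploit the \emph{full} $H$-invariance rather than mere $S$-invariance, and where the unimodularity of $H$ and of $\mathrm{Stab}_H(x)$ enters — it is what makes trivial the modular characters that would otherwise twist the correspondence. It also relies on the standard $l$-space toolkit (continuous sections of $H\to H/S$, the identification $\Sc^*(U\times F)\cong\Sc^*(U)\,\widehat{\otimes}\,\Sc^*(F)$, exactness and nuclearity properties of $\Sc$) from \cite{BZ} and \cite{Ber}.
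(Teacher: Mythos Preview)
The paper does not supply its own proof of this theorem: it is stated as a preliminary and simply referenced to \cite{Ber}, section 1.5. Your sketch is exactly the standard argument from that reference (local trivialisation via a section of $H\to H/S$, invariant measure from unimodularity, and the compact reformulation through $\Sc(E)\cong\mathrm{ind}_S^H\Sc(F)$ plus Frobenius reciprocity), so there is nothing to compare --- you have reproduced the intended proof rather than offered an alternative.
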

To formulate (iii) explicitly, let $W$ be a finite dimensional linear space over $\F$ with
a nondegenerate bilinear form $B$, and suppose $H$ acts on $W$ linearly
preserving $B$.
Then for any $\xi \in \Sc^*(Z\times W)^{H,\mu}$, we have
$\Fou_{B}(\mathrm{Fr}(\xi))=\mathrm{Fr}(\Fou_{B}(\xi))$, where $\mathrm{Fr}$ is taken with respect to the projection $Z \times W \to Z$.
\begin{definition} \label{abs-hom}
Let $W$ be a finite dimensional vector space over $\F$. Given a
distribution $\xi \in \Sc^*(W)$ we call it \textbf{abs-homogeneous of degree
$\mathbf{d}$} if for any $f \in \Sc(W)$ and $t\in \R^\times$,
$|\xi(h_{t^{-1}}(f))| = |t|^{-d} |\xi(f)|$ where
$(h_{t^{-1}}(f))(v)= f(tv)$.
\end{definition}
 For example, a Haar measure on $W$ is abs-homogeneous of degree
$\dim W$ and the $\delta$-distribution supported at $0$ is abs-homogeneous of degree
$0$.
\\

A crucial step in the proof of the main theorem is a
special case of a result by Rallis and Schiffmann (this theorem for the case $\mathrm{ch}(\F)=0$ appears in \cite{RS} as lemma 8.1, and the same proof works verbatim for the case of positive characteristic different from 2).

Let $W$ be a finite dimensional vector space over $\F$ and $B$ be a
nondegenerate symmetric bilinear form on $W$. Denote $Z(B):=\{ v \in
W | B(v,v)=0\}$. 

\begin{theorem}[Rallis-Schiffmann] \label{Metaplectic}
Let $\xi$ be a distribution on $W$ such that
both $\xi$ and $\Fou_{B}(\xi)$ are supported in $Z_{B}$.

 Then
$\xi$ is abs-homogeneous of degree $\frac{1}{2} \dim W$.
\end{theorem}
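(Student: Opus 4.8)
The plan is to deduce the statement from the Weil (oscillator) representation $\omega_\psi$ of the metaplectic double cover $\mathrm{Mp}_2(\F)$ of $\mathrm{SL}_2(\F)$ attached to the quadratic space $(W,Q)$, where $Q(v):=\tfrac{1}{2}B(v,v)$ --- so that $Z(B)=Q^{-1}(0)$ and the Fourier transform attached to $Q$ is exactly $\Fou_B$ --- and $\psi$ is a fixed nontrivial additive character of $\F$. The representation $\omega_\psi$ and the explicit formulas for the action of its standard generators are available over any local field of characteristic different from $2$, and I will use three of them: the standard lift of an upper unipotent element $u(b)$ acts on $\Sc(W)$ by multiplication by the function $v\mapsto\psi(bQ(v))$; the Weyl element $w$ acts by $\Fou_B$ times a scalar of absolute value $1$; and any lift of the diagonal element $t(a)=\mathrm{diag}(a,a^{-1})$ acts by $f(v)\mapsto\kappa(a)\,|a|^{\tfrac{1}{2}\dim W}f(av)$ with $|\kappa(a)|=1$ for every $a\in\F^\times$ (the factor $|a|^{\tfrac{1}{2}\dim W}$ is forced by unitarity, while the Weil index and the metaplectic $2$-cocycle contribute only the scalar $\kappa(a)$).

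The first step is to read the two hypotheses as invariance of $\xi$ under the two opposite unipotent subgroups. Since $\psi(bQ(\cdot))\equiv 1$ on $Z(B)\supseteq\mathrm{supp}(\xi)$, multiplying the distribution $\xi$ by the function $\psi(bQ)$ does not change it, so $\omega_\psi(u(b))\xi=\xi$ for all $b\in\F$; that is, $\xi$ is fixed by the upper unipotent subgroup $N$. Applying the same observation to $\Fou_B(\xi)$, which is supported in $Z(B)$ by assumption, gives $\omega_\psi(u(b))\Fou_B(\xi)=\Fou_B(\xi)$; since $\omega_\psi(w)$ is $\Fou_B$ up to a scalar, conjugating this identity by $\omega_\psi(w)$ and using $w^{-1}u(b)w\in\bar N$ shows that $\xi$ is fixed by the opposite unipotent subgroup $\bar N$ as well.

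The second step uses that $N(\F)$ and $\bar N(\F)$ generate $\mathrm{SL}_2(\F)$ (Gaussian elimination): the stabiliser of $\xi$ in $\mathrm{Mp}_2(\F)$ therefore maps onto $\mathrm{SL}_2(\F)$, so there is a lift $\widetilde{t(a)}\in\mathrm{Mp}_2(\F)$ of $t(a)$ with $\omega_\psi(\widetilde{t(a)})\xi=\xi$. By the third formula above, $\omega_\psi(\widetilde{t(a)})$ acts on $\Sc(W)$ by $f(v)\mapsto\kappa(a)|a|^{\tfrac{1}{2}\dim W}f(av)$ with $|\kappa(a)|=1$; passing to the contragredient action on $\Sc^*(W)$ and using the notation $(h_{t^{-1}}f)(v)=f(tv)$ of Definition \ref{abs-hom}, the identity $\omega_\psi(\widetilde{t(a)})\xi=\xi$ becomes $|\xi(h_a(f))|=|a|^{\tfrac{1}{2}\dim W}|\xi(f)|$ for all $f\in\Sc(W)$ and $a\in\F^\times$. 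Substituting $a=t^{-1}$, this is precisely the assertion that $\xi$ is abs-homogeneous of degree $\tfrac{1}{2}\dim W$, which finishes the proof.

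The argument is formal once $\omega_\psi$ is in place, so the main thing to get right is the bookkeeping of its normalisations: isolating the factor $|a|^{\tfrac{1}{2}\dim W}$ in the torus action, and checking that the Weil index and the metaplectic cocycle only ever contribute scalars of absolute value $1$; one also has to record the (routine but essential) fact that all of this structure survives in positive characteristic different from $2$, which is exactly Rallis and Schiffmann's remark that their argument goes through verbatim. An alternative route that avoids naming the oscillator representation is to compute directly how $\Fou_B$ conjugates the operator of multiplication by $\psi(bQ)$ --- a twisted Gaussian convolution --- and then run the same $\mathrm{SL}_2(\F)$-generation argument by hand; but this merely re-derives the portion of $\omega_\psi$ used above, so invoking the Weil representation is cleaner.
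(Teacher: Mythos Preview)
Your argument is correct and is essentially the proof of Lemma~8.1 in \cite{RS}, which is exactly what the paper invokes rather than proving the statement itself; the paper only adds the remark that the Rallis--Schiffmann argument goes through verbatim in positive characteristic $\neq 2$. Your sketch makes that remark concrete by observing that the oscillator representation and the three generator formulas you use are available over any local field of characteristic $\neq 2$, so there is nothing to compare beyond noting that you have reproduced the cited proof.
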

\begin{definition} \label{comp}
A matrix $A \in \mathrm{gl}(n,\F)$ is said to be a companion matrix, if:
\begin{enumerate}
    \item It has $1$ in the entries of the diagonal immediately below the main diagonal.
    \item For all $1\leq i\leq n$ we have $A_{i,n}=-a_{i-1}$, where $f(x)=x^n+a_{n-1}x^{n-1}+\dots+a_0$ is a power of an irreducible polynomial.
    \item All of the other entries of $A$ are $0$.
\end{enumerate}
We also call such a matrix the companion matrix of $f$.
For example, the companion matrix of $f(x)=x^3-3x^2+3x-1$ is $$ \left( \begin{matrix}
    0 & 0 & 1 \\
    1 & 0 & -3 \\
    0 & 1 & 3
  \end{matrix} 
  \right)$$
Both the characteristic polynomial and the minimal polynomial of the companion matrix of $f(x)$ are equal to $f(x)$.
\end{definition}
\begin{theorem} [Rational Canonical Form] \label{Rational}
Any matrix $A\in \mathrm{gl}(n,\F)$ is conjugate to a direct sum of companion matrices. Moreover, this form is unique up to a permutation of the blocks.
\end{theorem}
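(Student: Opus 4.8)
The plan is to translate the statement into the language of modules over the principal ideal domain $\F[x]$ and invoke the structure theorem. Regard $V=\F^{n}$ as an $\F[x]$-module by letting $x$ act as $A$; since $\dim_{\F}V<\infty$ this module is finitely generated and torsion, and $\F[x]$ is a PID (indeed Euclidean). By the elementary-divisor form of the structure theorem for finitely generated torsion modules over a PID there is an $\F[x]$-module isomorphism
$$V\;\cong\;\bigoplus_{j=1}^{r}\F[x]\big/\big(p_{j}(x)^{e_{j}}\big),$$
where each $p_{j}$ is a monic irreducible polynomial and $e_{j}\ge 1$, and the multiset $\{p_{j}(x)^{e_{j}}\}_{j}$ is uniquely determined by the isomorphism class of the module.

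Next I would compute the matrix of the action of $x$ on a single cyclic summand $\F[x]/(g)$, where $g(x)=x^{d}+a_{d-1}x^{d-1}+\dots+a_{0}$ is a power of an irreducible polynomial, in the basis $1,x,x^{2},\dots,x^{d-1}$. For $i<d$ one has $x\cdot x^{i-1}=x^{i}$, while $x\cdot x^{d-1}=-a_{0}-a_{1}x-\dots-a_{d-1}x^{d-1}$; hence the matrix has $1$ on the subdiagonal, last column equal to $(-a_{0},\dots,-a_{d-1})^{t}$, and zeros elsewhere, which is precisely the companion matrix of $g$ in the sense of Definition \ref{comp}. Concatenating the chosen bases of the summands yields an $\F$-basis of $V$ in which $A$ is represented by the direct sum of these companion matrices; the corresponding change-of-basis matrix is the desired conjugating element of $\mathrm{GL}(n,\F)$. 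This proves existence.

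For uniqueness it suffices to observe that any direct sum of companion matrices $\bigoplus_{j}C_{g_{j}}$ presents $V$ as $\bigoplus_{j}\F[x]/(g_{j})$ as an $\F[x]$-module, so the multiset $\{g_{j}\}$ is exactly the multiset of elementary divisors of $(V,A)$, which depends only on the $\F[x]$-module structure and hence only on the conjugacy class of $A$. To keep this self-contained one may note that for each monic irreducible $p$ and each $k\ge 1$ the number of indices $j$ with $g_{j}=p^{m}$ for some $m\ge k$ equals $\tfrac{1}{\deg p}\big(\dim_{\F}\ker p(A)^{k}-\dim_{\F}\ker p(A)^{k-1}\big)$, an expression visibly invariant under conjugation; letting $k$ range over all positive integers recovers the full partition attached to each $p$, hence the whole collection of blocks up to reordering.

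The only real input is the structure theorem over a PID, which is classical (for instance via Smith normal form), so there is no genuine obstacle here, and the argument is insensitive to the characteristic of $\F$. If one prefers to avoid quoting the structure theorem wholesale, one can instead first use the elementary primary decomposition $V=\bigoplus_{p}\ker p(A)^{N}$ over the distinct monic irreducible factors $p$ of $\mathrm{ch}A$ (with $N$ large), and then run the standard maximal-cyclic-vector induction inside each primary component to split it as a sum of cyclic pieces $\F[x]/(p^{e})$.
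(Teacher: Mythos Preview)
Your argument is correct and is the standard module-theoretic proof of the rational canonical form. The paper, however, does not prove Theorem~\ref{Rational} at all: it is listed in the Preliminaries section as a classical fact and simply quoted where needed. So there is nothing to compare against; you have supplied a valid proof where the paper chose to cite the result.
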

This form is called the rational canonical form of $A$.

\begin{remark}
Let $Z$ be an $l$-space and $Q\subset Z$ be a closed subset. We will
identify $\Sc^*(Q)$ with the space of all distributions on $Z$
supported on $Q$. In particular, we can restrict a distribution
$\xi$ to any open subset of the support of $\xi$.
\end{remark}

\section{Reformulations of the Problem} \label{Reform}
Let $G:=G_n :=\mathrm{GL}(n,\F)$. Consider the action of the
2-element group $S_2$ on $G$ given by the involution $g \mapsto {\,
(g^{-1})^t}$. It defines a semidirect product which we denote by $\tG := \tG_n := G_n
\rtimes S_2$. Let $V:=V_n:=\F^n$ and $X := X_n := \mathrm{gl}(V)
\times V \times V^*$.

The group $\tG$ acts on $X$ by
$$(g,1).(A,v,\phi):= (gAg^{-1},gv, (g^{-1})^* \phi)$$
$$(g,-1).(A,v,\phi):=(gA^tg^{-1},g\phi^t,(g^{-1})^*v^t)$$
where $g \in G$ and $-1$ is considered as the generator of $S_2$. Here, $A^t$
denotes the transposed  matrix in $\mathrm{gl}_n$, $\phi^t \in V$ denotes
the column vector corresponding to the row vector $\phi \in V^*$,
and $v^t$ denotes the row vector corresponding to the column
vector $v \in V$. Also for any operator $g:V \to V$, we denote by
$g^*:V^* \to V^*$ the adjoint operator.

Note that $\tG$ acts separately on $\mathrm{gl}(V)$ and on $V \times
V^*$. Define a character $\chi$ of $\tG$ by $\chi(g,s):=
\mathrm{sign}(s)$.

In this section we show that theorem \ref{goal} can be deduced from the following theorem.
\begin{theorem} [Main Theorem] \label{main}
Any $(\tG,\chi)$-equivariant distribution on $X$ is zero.
\end{theorem}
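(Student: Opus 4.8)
The plan is to prove Theorem~\ref{main} by induction on $n$, following the characteristic-zero argument of \cite{AGRS,0-char} but systematically replacing eigenvalue/Jordan decompositions (which fail over an imperfect field) by the rational canonical form (Theorem~\ref{Rational}), which is available in every characteristic. Fix a $(\tG,\chi)$-equivariant distribution $\xi$ on $X=\mathrm{gl}(V)\times V\times V^*$; the goal is to show that $\xi=0$, and the argument consists of successively shrinking $\mathrm{supp}(\xi)$. First I would establish $\mathrm{supp}(\xi)\subseteq R$, where $R=\{(A,v,\phi):\phi A^kv=0 \text{ for all } k\ge 0\}$. One begins with $\mathrm{supp}(\xi)\subseteq\{\phi v=0\}$: since $(v,\phi)\mapsto\phi v$ is $\tG$-invariant, the Localization principle (Theorem~\ref{LocPrin}) reduces this to the absence of $(\tG,\chi)$-equivariant distributions on a fiber $\{\phi v=c\}$ with $c\ne 0$, which follows from Frobenius descent (Theorem~\ref{Frob}) along a suitable $\tG$-map and the inductive hypothesis. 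Then, since $\rho_f$ is an automorphism of the problem in the sense of the Localization principle, $\mathrm{supp}(\xi)$ is $\rho_f$-stable, so applying $\phi v=0$ to $\rho_f(A,v,\phi)$ gives $\phi f(A)^2 v=0$ for a dense set of $f\in\F(x)^\times$, hence for every polynomial $f$; polarizing---this is where the hypothesis $\mathrm{char}\,\F\ne 2$ enters---yields $\phi f(A)v=0$ for all $f\in\F[x]$, i.e.\ $(A,v,\phi)\in R$. By a linear-algebra statement (Theorem~\ref{LinAlg} below) this is exactly the condition that every $\nu_\lambda$ keeps $(A,v,\phi)$ inside its $\Delta$-fiber.

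Next I would localize with respect to $\Delta$ (Corollary~\ref{LocPrinCor}): it suffices to annihilate every $(\tG,\chi)$-equivariant distribution on $\Delta^{-1}(g)$ supported in $R$, for each $g\in\F[x]$. Matrices with characteristic polynomial $g$ form finitely many $\tG$-orbits (Theorem~\ref{Rational}); writing $P_i$ for the union of the orbits of dimension $\le i$, setting $R_i=(P_i\times V\times V^*)\cap R$ and, for an open orbit $O$ of $P_i$, $\tO=(O\times V\times V^*)\cap\bigcap_{\lambda\in\F}\nu_\lambda^{-1}(R_i)$, a downward induction on $i$ (base case by finiteness) reduces everything to killing the $(\tG,\chi)$-equivariant distributions supported in $\tO$. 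Here I bring in the partial Fourier transform $\Fou_{V\oplus V^*}$; it preserves $\tG$-equivariance and, via Proposition~\ref{2Four}, the support conditions, so the induction step follows from Claim~\ref{sk_orbit}, which becomes the target.

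To prove Claim~\ref{sk_orbit} one observes $Q_A\subseteq R_A$ and that $(A,v,\phi)\in\tO$ forces $(v,\phi)\in Q_A$---moving $A$ in the direction $v\otimes\phi$ while remaining in the conjugacy class of $A$ requires $v\otimes\phi\in[A,\mathrm{gl}(V)]$. Using the relation between $Q_{A_1\oplus A_2}$ and $Q_{A_1}\oplus Q_{A_2}$ (and similarly for $R$), together with Corollary~\ref{Product} and the inductive hypothesis on $n$, one reduces to the case where $A$ is the companion matrix of $f^s$ for an irreducible $f$. For such $A$ introduce the descending filtration $U_i=f(A)^iV$ and its dual $U_i^*=U_{s-i}^\perp=f(A^*)^iV^*$; one checks $R_A=\bigcup_{i=0}^s U_i\oplus U_{s-i}^*$, a union of Lagrangians for the form $Q$. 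On the bottom stratum $U_0\oplus U_s^*$, $(\tC_A,\chi)$-equivariance forces the relevant distribution to be abs-homogeneous of degree $0$ (invariance under homothety), whereas Theorem~\ref{Metaplectic} (Rallis--Schiffmann, valid since $\mathrm{char}\,\F\ne 2$) forces abs-homogeneity of degree $\frac{1}{2}\dim$; these are incompatible unless the distribution vanishes on that stratum, and peeling it off reduces to a smaller $s$, closing the induction and proving Claim~\ref{sk_orbit} and hence Theorem~\ref{main}.

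I expect the crux to be the companion-matrix endgame: the reduction from a general $A$---whose characteristic polynomial may be inseparable and have repeated irreducible factors---down to a single companion block via the direct-sum behaviour of $Q$ and $R$ together with Corollary~\ref{Product}; the precise identification of $R_A$ with $\bigcup_{i=0}^s U_i\oplus U_{s-i}^*$; and the extraction of a contradiction from Rallis--Schiffmann against $\tC_A$-equivariance. The bookkeeping of the downward induction on orbit dimension---in particular verifying that $\tO$ is exactly the object for which Claim~\ref{sk_orbit} delivers the induction step---is the other delicate point, while the estimate $\phi v=0$ and the linear-algebra fact behind $R$ should be comparatively self-contained.
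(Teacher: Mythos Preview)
Your proposal is correct and follows essentially the same route as the paper: induction on $n$, the support restriction to $R$ via $\rho_f$ and polarization, localization along $\Delta$, downward stratification by orbit dimension reducing to Claim~\ref{sk_orbit}, Frobenius descent to the centralizer, reduction to companion matrices via the direct-sum behaviour of $Q_A$, and the Rallis--Schiffmann/filtration endgame with induction on $s$. One small imprecision: the reduction to a single companion block does not invoke the inductive hypothesis on $n$ or Corollary~\ref{Product}, but rather the closure of ``niceness'' under direct sums (Lemma~\ref{DirectSum}, proved via Proposition~\ref{2Four} and the Localization principle), after which it remains only to show every companion matrix is nice.
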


\begin{proposition} \label{Red1}
If  $\Sc^*(G_{n+1})^{\tG_n,\chi}=0$ then theorem \ref{goal} holds.
\end{proposition}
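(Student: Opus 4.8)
The plan is to relate distributions on $G_{n+1}$ invariant under conjugation by $G_n$ to distributions on the "linearized" space $X_n = \mathrm{gl}(V)\times V\times V^*$ equivariant under $(\tG_n,\chi)$. First I would observe that proving Theorem \ref{goal} is equivalent to showing that every $G_n$-invariant distribution $\xi$ on $G_{n+1}$ satisfies $\sigma(\xi)=\xi$, where $\sigma$ is transposition; since $\sigma^2=\mathrm{id}$, this is the vanishing of the $(-1)$-eigenspace, i.e. $\Sc^*(G_{n+1})^{G_n}$ coincides with its $\sigma$-invariant part. The reformulation I would aim for is: this holds provided every distribution on $G_{n+1}$ that is invariant under $G_n$-conjugation \emph{and} anti-invariant under $\sigma$ (equivalently, $(\tG_n,\chi)$-equivariant, once the semidirect-product action is set up so that the $S_2$-generator acts by the appropriate twisted transposition) is zero. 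So the content of Proposition \ref{Red1} is exactly that the hypothesis $\Sc^*(G_{n+1})^{\tG_n,\chi}=0$ forces every $G_n$-invariant distribution to be transposition-invariant, which is essentially the eigenspace decomposition argument together with identifying the anti-invariant part with the $(\tG_n,\chi)$-equivariant distributions.

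The key steps, in order, would be: (1) Set up the action of $\tG_n=G_n\rtimes S_2$ on $G_{n+1}$ so that $G_n$ acts by conjugation and the generator of $S_2$ acts by $g\mapsto {}^tg$ (possibly composed with conjugation by a suitable fixed element), checking this is a well-defined action and that $\chi$ restricted to $S_2$ is the sign character. (2) Given a $G_n$-invariant distribution $\xi$ on $G_{n+1}$, decompose $\xi = \xi^+ + \xi^-$ into the $\sigma$-invariant and $\sigma$-anti-invariant parts, $\xi^{\pm} = \tfrac12(\xi \pm \sigma\xi)$; both pieces remain $G_n$-invariant because conjugation commutes with transposition up to inverse, which is absorbed by the $G_n$-invariance. (3) Identify $\xi^-$ as a $(\tG_n,\chi)$-equivariant distribution on $G_{n+1}$, hence zero by hypothesis, so $\xi=\xi^+$ is transposition-invariant, which is Theorem \ref{goal}.

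The main obstacle I expect is step (1): making the $S_2$-action on $G_{n+1}$ genuinely compatible with the $G_n$-conjugation action so that the semidirect product really acts. The naive map $g\mapsto {}^tg$ on $G_{n+1}$ does not commute correctly with $G_n$-conjugation — one has ${}^t(hgh^{-1}) = {}^t(h^{-1})\,{}^tg\,{}^th$, so one must conjugate by an element realizing $h\mapsto {}^t(h^{-1})$ on $G_n$, e.g. an explicit symmetric element or the identity together with the twisted action already used on $X$ in the paper. Once the action is correctly normalized, the eigenspace argument in steps (2)–(3) is routine: the only subtlety is verifying that $\sigma\xi$ is still $G_n$-invariant, which follows from $G_n$ being stable under $g\mapsto {}^t(g^{-1})$ and the invariance hypothesis on $\xi$. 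I would close by remarking that the genuinely hard analytic work — showing $\Sc^*(G_{n+1})^{\tG_n,\chi}=0$, equivalently the Main Theorem \ref{main} via a further linearization (Harish-Chandra descent / Cayley transform to pass from $G_{n+1}$ to $\mathrm{gl}_{n+1}$ and then to $X_n$) — is deferred to the later sections, and Proposition \ref{Red1} is purely the formal reduction.
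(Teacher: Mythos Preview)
Your proposal is correct and follows essentially the same approach as the paper: the paper's proof simply takes $\eta:=\xi-\xi^t$, observes $\eta\in\Sc^*(G_{n+1})^{\tG_n,\chi}$, and concludes $\eta=0$, which is exactly your antisymmetrization step (3). Your concern in step (1) is not a real obstacle here because $G_n$ sits in $G_{n+1}$ via the upper-left block, so transposition on $G_{n+1}$ and the twisted action $h\mapsto{}^th^{-1}$ on $G_n$ are automatically compatible; the paper treats this as evident and you need not labor over it.
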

%The proof is straightforward.
\begin{proof}
Let $\xi \in \Sc^*(G_{n+1})^{G_n}$. %Let $\xi^t$ denote the push of $\xi$
Consider $\eta :=\xi -\xi^t$. Clearly $\eta \in
\Sc^*(G_{n+1})^{\tG_n,\chi}$, hence $\eta=0$ and so $\xi
=\xi^t$.
\end{proof}
\begin{proposition} \label{Red2}
If  $\Sc^*(\mathrm{gl}_{n+1})^{\tG_n,\chi}=0$ then
$\Sc^*(G_{n+1})^{\tG_n,\chi}=0$.
\end{proposition}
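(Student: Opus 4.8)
The plan is to relate distributions on $G_{n+1}$ to distributions on $\mathrm{gl}_{n+1}$ by means of the Cayley transform, which plays here the role the exponential map plays in characteristic zero; this is the one place where $\mathrm{char}\,\F\neq2$ is used, since the transform below degenerates to a constant if $2=0$.

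First, I would localize both sides over characteristic polynomials. Let $T'$ be the $l$-space of monic degree-$(n+1)$ polynomials over $\F$ and $T\ss T'$ the open subset of those with nonzero constant term, and let $\mathrm{ch}$ denote both the characteristic polynomial map $\mathrm{gl}_{n+1}\to T'$ and its restriction $G_{n+1}\to T$. These maps are continuous, $\tG_n$-invariant (conjugation by $G_n\ss G_{n+1}$ and transposition preserve characteristic polynomials), and have closed fibres, and $\Sc^*(\mathrm{gl}_{n+1})^{\tG_n,\chi}$ is a closed $\Sc(T')$-submodule of $\Sc^*(\mathrm{gl}_{n+1})$ (the module structure coming from $\mathrm{ch}$). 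Hence by Theorem \ref{LocPrin} this space is the closure of $\bigoplus_{q\in T'}\Sc^*(\mathrm{ch}^{-1}(q))^{\tG_n,\chi}$, so the hypothesis forces $\Sc^*(\mathrm{ch}^{-1}(q))^{\tG_n,\chi}=0$ for every $q\in T'$; and by Corollary \ref{LocPrinCor} the conclusion will follow once we know $\Sc^*(\mathrm{ch}^{-1}(p))^{\tG_n,\chi}=0$ for every $p\in T$. Thus the whole problem reduces to producing, for each $p\in T$, some $q\in T'$ and a $\tG_n$-equivariant homeomorphism $\mathrm{ch}^{-1}(q)\xrightarrow{\sim}\mathrm{ch}^{-1}(p)$, since such a map transports $(\tG_n,\chi)$-equivariant distributions to $(\tG_n,\chi)$-equivariant ones.

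Next, for $u\in\F^\times$ I would use the twisted Cayley transform $\Psi_u(A):=u(1+A)(1-A)^{-1}$, defined on $D:=\{A\in\mathrm{gl}_{n+1}:\det(1-A)\neq0\ \text{and}\ \det(1+A)\neq0\}$. A direct computation shows: $\Psi_u$ is a homeomorphism of $D$ onto $\{h\in G_{n+1}:\det(u+h)\neq0\}$, with continuous inverse $h\mapsto(u+h)^{-1}(h-u)$; $\Psi_u(gAg^{-1})=g\Psi_u(A)g^{-1}$ for $g\in G_n$ and $\Psi_u(A^t)=\Psi_u(A)^t$ (using that $1+A$ commutes with $(1-A)^{-1}$), so $\Psi_u$ is $\tG_n$-equivariant and intertwines the characters $\chi$; and on eigenvalues $\Psi_u$ acts through the Möbius transformation $\alpha\mapsto u(1+\alpha)/(1-\alpha)$, a bijection of $\PP^1(\overline{\F})$ with inverse $\gamma\mapsto(\gamma-u)/(\gamma+u)$ that commutes with $\mathrm{Gal}(\overline{\F}/\F)$. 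Now fix $p\in T$, with roots $\alpha_1,\dots,\alpha_{n+1}\in\overline{\F}^\times$, and choose $u\in\F^\times$ with $-u\notin\{\alpha_1,\dots,\alpha_{n+1}\}$ — possible since $\F$ is infinite. Put $\beta_i:=(\alpha_i-u)/(\alpha_i+u)$ and $q:=\prod_{i=1}^{n+1}(x-\beta_i)$. The multiset $\{\beta_i\}$ is $\mathrm{Gal}(\overline{\F}/\F)$-stable, so $q\in T'$; and $q$ has no root at $\pm1$, since $\beta_i=1$ would force $u=0$ and $\beta_i=-1$ would force $\alpha_i=0$. Hence $\mathrm{ch}^{-1}(q)\ss D$, and because the Möbius transformation above carries $\{\beta_i\}$ to $\{\alpha_i\}$, $\Psi_u$ maps $\mathrm{ch}^{-1}(q)$ into $\mathrm{ch}^{-1}(p)$; this restriction is onto $\mathrm{ch}^{-1}(p)$ because $-u$ is not a root of $p$, so every $h$ with $\mathrm{ch}(h)=p$ has $\det(u+h)\neq0$ and therefore lies in the image of $\Psi_u$. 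Thus $\Psi_u$ restricts to a $\tG_n$-equivariant homeomorphism $\mathrm{ch}^{-1}(q)\xrightarrow{\sim}\mathrm{ch}^{-1}(p)$, which is precisely what the first step reduced us to.

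I do not expect a genuine obstacle: the argument is essentially bookkeeping. The points needing care are the compatibility of the three localizations (over $T'$ on the Lie-algebra side, over $T$ on the group side, and the fibrewise identification by $\Psi_u$), the verification that $\Psi_u$ is genuinely $\tG_n$-equivariant and not merely $G_n$-conjugation-equivariant, and the use of the infinitude of $\F$ — automatic for a local field — in order to reach every $p\in T$.
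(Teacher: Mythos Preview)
Your argument is correct, but it takes a substantially different and longer route than the paper. The paper simply exploits that $G_{n+1}$ is open in $\mathrm{gl}_{n+1}$: given a nonzero $\xi\in\Sc^*(G_{n+1})^{\tG_n,\chi}$, pick $p\in\mathrm{Supp}(\xi)$, set $t=\det p$, choose $f\in\Sc(\F)$ with $f(0)=0$ and $f(t)\neq 0$, and let $\xi':=(f\circ\det)\cdot\xi$. Then $\xi'$ is still $(\tG_n,\chi)$-equivariant, still has $p$ in its support, and now $\mathrm{Supp}(\xi')\subset\det^{-1}(\mathrm{supp}\,f)$ is closed in $\mathrm{gl}_{n+1}$, so $\xi'$ extends by zero to an element of $\Sc^*(\mathrm{gl}_{n+1})^{\tG_n,\chi}=0$, a contradiction. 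Your Cayley-transform argument works, but it is considerably heavier (two localizations plus a fibrewise homeomorphism) and, unlike the paper's proof of this particular step, it genuinely requires $\mathrm{char}\,\F\neq 2$. Thus your parenthetical remark that this is ``the one place where $\mathrm{char}\,\F\neq2$ is used'' is off: the paper invokes that hypothesis elsewhere (for instance in the identity $\phi A^kv=\tfrac12[\phi(1+A^k)^2v-\phi(A^k)^2v-\phi v]$ in Proposition~\ref{R}, and in Theorem~\ref{Metaplectic}), but not in Proposition~\ref{Red2}. What your approach does buy is an explicit $\tG_n$-equivariant identification of conjugacy-class fibres in $G_{n+1}$ with fibres in $\mathrm{gl}_{n+1}$, which could be useful when no open embedding is available; here, however, the elementary extension-by-zero trick suffices.
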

\begin{proof}
%\footnote{This proof is analogous to the proof of an
%analogous statement in \cite{Ber}, section 2.2.}
Let $\xi \in \Sc^*(G_{n+1})^{\tG_n,\chi}$. We have to prove $\xi=0$. Assume the contrary. Take $p \in \mathrm{Supp}(\xi)$. Let $t=\mathrm{det}(p)$.
Let $f\in \Sc(\F)$ be such that $f(0)=0$ and $f(t) \neq 0$. Consider
the determinant map $\mathrm{det}:G_{n+1} \to \F$. Consider
$\xi':=(f \circ \mathrm{det})\cdot \xi$. It is easy to check that
$\xi' \in \Sc^*(G_{n+1})^{\tG_n,\chi}$ and $p \in
\mathrm{Supp}(\xi')$. However, we can extend $\xi'$ by zero to
$\xi'' \in \Sc^*(\mathrm{gl}_{n+1})^{\tG_n,\chi}$, which is zero by
the assumption. Hence $\xi'$ is also zero. This yields contradiction.
\end{proof}

%\begin{proposition} \label{Red3}
%If $\Sc^*(\mathrm{sl}_{n+1})^{\tG_n,\chi}=0$ then
%$\Sc^*(\mathrm{gl}_{n+1})^{\tG_n,\chi}=0$
%\end{proposition}
%\begin{proof}
%Consider the trace map $\mathrm{tr}:\mathrm{gl}_{n+1}
%\to \F$. By the Localization principle (Corollary \ref{LocPrinCor}),
%it is enough to prove that for any $t\in \F$,
%$\Sc^*(\mathrm{tr}^{-1}(t))^{\tG_n,\chi}=0$. However, all
%$\mathrm{tr}^{-1}(t)$ are isomorphic as $\tG_n$-equivariant
%$l$-spaces to $\mathrm{sl}_{n+1}$ by $A \mapsto A -
%\frac{\mathrm{tr}(A)}{n}\mathrm{Id}$.
%\end{proof}

\begin{proposition} \label{Red3}
If $\Sc^*(X)^{\tG_n,\chi}=0$ then
$\Sc^*(\mathrm{gl}_{n+1})^{\tG_n,\chi}=0$.
\end{proposition}
\begin{proof}
Consider the $\tG$-invariant map $q:\mathrm{gl}_{n+1} \to \F$ given
by $q(B):= B_{n+1,n+1}$. By the Localization principle (corollary
\ref{LocPrinCor}), it is enough to prove that for any $t\in \F$,
$\Sc^*(q^{-1}(t))^{\tG_n,\chi}=0$. However, all $q^{-1}(t)$ are
isomorphic as $\tG_n$-equivariant $l$-spaces to $X$ by
$$ \left(
  \begin{array}{cc}
    A_{n \times n} & v_{n\times 1} \\
    \phi_{1\times n} & \lambda \\
  \end{array}
\right) \mapsto (A, v,\phi) $$
\end{proof}

\section{Proof of the Main Theorem}
We prove the main theorem (Theorem \ref{main}) by induction on $n$. That is, we assume that $\Sc^*(X_m)^{\tG_m,\chi}=0$ for all $m<n$, for $\F$ along with all of its finite extensions.
%We assume in this proof $p|n$, as otherwise the proof given in \cite{0-char} is still valid (We might still need the case $p\nmid n$ for the induction, but the induction step is the same).

\begin{notation}
Set $\Delta:X\to\F[x]$ to be the map $(A,v,\phi)\mapsto \mathrm{ch}(A)$. This is a continuous map of $\ell$-spaces.
\end{notation}
\subsection{Restriction of the Possible Support %of a $(\tG,\chi)$-equivariant distribution
}

\begin{proposition} \label{perp}
Any $(\tG,\chi)$-equivariant distribution on $X$ is supported on $\{(A,v,\phi)\in X|\phi v=0\}$.
\end{proposition}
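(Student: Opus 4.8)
The plan is to use the Localization principle with respect to the map $\Delta$ restricted to the $V\times V^*$ factor, or more precisely to use Frobenius descent to reduce to a statement about distributions on $V\times V^*$ alone. Fix a characteristic polynomial $g\in\F[x]$ and restrict to the fiber $\Delta^{-1}(g)=\{A:\mathrm{ch}(A)=g\}\times V\times V^*$; since $\Delta$ is $\tG$-invariant, Corollary~\ref{LocPrinCor} reduces the proposition to showing that every $(\tG,\chi)$-equivariant distribution supported on such a fiber is supported where $\phi v=0$. On a fiber, the $\tG$-action on the $\mathrm{gl}(V)$-coordinate has finitely many orbits (by the Rational Canonical Form, Theorem~\ref{Rational}, matrices with a fixed characteristic polynomial fall into finitely many conjugacy classes, and adding the transpose-twist only coarsens this), so by a further stratification argument it suffices to treat one orbit $O\subseteq\mathrm{gl}(V)$ at a time. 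Here I would invoke Frobenius descent (Theorem~\ref{Frob}) for the $\tG$-equivariant projection $O\times V\times V^* \to O$: the problem becomes showing that every $(\tC_A,\chi)$-equivariant distribution on $\{A\}\times V\times V^*\cong V\times V^*$ is supported on $\{\phi v=0\}$, where $\tC_A$ is the stabilizer of $A$ in $\tG$ (which contains the centralizer $C_A\subseteq\mathrm{GL}(V)$ of $A$).

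The core of the argument is thus the following purely representation-theoretic claim: if $\xi\in\Sc^*(V\oplus V^*)$ is $(\tC_A,\chi)$-equivariant, then $\mathrm{supp}(\xi)\subseteq\{(v,\phi):\phi v=0\}$. The function $(v,\phi)\mapsto\phi v$ is $C_A$-invariant (indeed $\mathrm{GL}(V)$-invariant in the relevant sense, since $(g^{-1})^*\phi$ paired with $gv$ gives back $\phi v$) and is anti-invariant under the transpose element only up to the identification $\phi^t v^t = \phi v$, so in fact it is $\tC_A$-invariant. Therefore I can localize once more, now with respect to the $\tC_A$-invariant map $(v,\phi)\mapsto\phi v$ from $V\oplus V^*$ to $\F$, and it suffices to show: for every $t\neq 0$, there is no nonzero $(\tC_A,\chi)$-equivariant distribution on the hypersurface $H_t:=\{\phi v=t\}$. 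Now on $H_t$ one has $v\neq 0$ and $\phi\neq 0$, and one can hope to find an element $h\in\tC_A$ that acts on $H_t$ with $\chi(h)=-1$ and fixes enough structure — concretely, the transposition-type element of $\tC_A$ (if the companion-matrix normal form of $A$ is symmetric, which can be arranged, $A^t=A$ so $(1,-1)\in\tC_A$) swaps $v\leftrightarrow\phi^t$ and negates $\chi$ while preserving each $H_t$; combined with the $C_A$-action one should get that the orbit of any point in $H_t$ under $\tC_A$ is large enough to force $\xi|_{H_t}=0$, e.g. because the pair of a point and its image under the sign-negating element can be separated while the distribution must be symmetric-up-to-sign.

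The step I expect to be the main obstacle is establishing the last point — that no nonzero $(\tC_A,\chi)$-equivariant distribution lives on $H_t$ for $t\neq 0$. The clean way is probably to show that $C_A$ already acts transitively enough on $H_t$ modulo the action of the sign-twist: if one can exhibit, for each $(v,\phi)\in H_t$, an element $c\in C_A$ with $c\cdot v = (\,^t c^{-1})\cdot \phi^t$ interacting with the transpose element so that the combined element $h\in\tC_A$ fixes $(v,\phi)$ but has $\chi(h)=-1$, then equivariance forces $\xi$ to vanish at $(v,\phi)$. Making this work uniformly requires understanding the $C_A$-orbits on $V$ (equivalently the $\F[A]$-module structure of $V$ with a chosen generator) well enough to pin down the stabilizer; this is where the companion-matrix reduction and the explicit module-theoretic description of $V$ as $\F[x]/(g)$-type modules will be needed. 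Once that linear-algebra input is in hand, the descent machinery assembled above converts it back into the statement of Proposition~\ref{perp}.
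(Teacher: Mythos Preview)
Your approach has a genuine gap, and it stems from descending in the wrong direction.

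You first localize along $\Delta$ and then along orbits in $\mathrm{gl}(V)$, applying Frobenius descent to $O\times V\times V^*\to O$ and reducing to a statement about $(\tC_A,\chi)$-equivariant distributions on $V\oplus V^*$. The trouble is that $\tC_A$ is in general far too small for the final step you outline to succeed. Take $A$ regular semisimple (diagonal with distinct eigenvalues). Then $C_A$ is the diagonal torus, and for a generic $(v,\phi)\in H_t$ (all coordinates of $v$ and $\phi$ nonzero) the stabilizer of $(v,\phi)$ in $C_A$ is trivial. The sign-$(-1)$ element of $\tC_A$ sends $(v,\phi)$ to $(g\phi^t,(g^{-1})^*v^t)$ for some fixed $g$ conjugating $A^t$ to $A$, and this is not $(v,\phi)$ in general. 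So there is simply no element of $\tC_A$ with $\chi=-1$ fixing $(v,\phi)$, and your proposed mechanism for killing the distribution on $H_t$ does not apply. More broadly, the space $\Sc^*(H_t)^{\tC_A,\chi}$ need not vanish for individual $A$; it is only after assembling all $A$ that the vanishing holds, so slicing by $A$ first loses essential information.

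The paper's proof goes the other way: it localizes directly along $\kappa(A,v,\phi)=\phi v$ (no need for $\Delta$ at all here), and on $\kappa^{-1}(a)$ with $a\neq 0$ it applies Frobenius descent to the projection onto the $V\times V^*$ coordinates. The point is that $\tG$ (in fact already $G$) acts \emph{transitively} on $\{(v,\phi):\phi v=a\}$, with stabilizer of $(ae_n,e_n^*)$ equal to $\tG_{n-1}$. The fiber is a copy of $\mathrm{gl}_n$, so the problem becomes $\Sc^*(\mathrm{gl}_n)^{\tG_{n-1},\chi}=0$, which is exactly Proposition~\ref{Red3} combined with the induction hypothesis $\Sc^*(X_{n-1})^{\tG_{n-1},\chi}=0$. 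You never invoke the induction on $n$, and that is the missing idea: Proposition~\ref{perp} is not a self-contained statement but the place where the induction hypothesis enters the proof.
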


\begin{proof}
The map $$\kappa:X\to \F,\ (A,v,\phi)\mapsto \phi v$$ is $\tG$-invariant, and so by Localization principle (theorem \ref{LocPrin}) it is enough to consider $(\tG,\chi)$-equivariant distributions on a single fiber $\kappa^{-1}(a)$ where $a\in \F^\times$, and show they must be $0$.% The fibers $\kappa^{-1}(a)$ are all isomorphic when $a\in \F^\times$, so it is enough to consider $\kappa^{-1}(1)$.
\\Let $e_n$ denote the last element of the standard basis of $V$, and $e_n^*$ denote the last element of the standard dual basis of $V^*$.
We can use Frobenius descent (\ref{Frob}) on $pr:\kappa^{-1}(a)\to V\times V^*$, as the centralizer of $(ae_n,e_n^*)$ is $\tG_{n-1}$, which is unimodular.\\
This gives us $$\Sc^*(\kappa^{-1}(a))^{\tG_{n}}=\Sc^*(gl_{n-1})^{\tG_{n-1}}$$

We are are left with proving that any $(\tG_{n-1},\chi_{n-1})$-equivariant distribution on $\mathrm{gl}_{n-1}$ is 0. This follows form the main theorem (Theorem \ref{main}) for $n-1$ by Proposition \ref{Red3}, and we assume it in the induction hypothesis.
\end{proof}

\subsection{Introducing Automorphisms of the Problem}

%Using the Localization principle (\ref{LocPrin}) on $\Delta$, it is enough restrict ourselves to a single fiber $F$ of this map when proving some things about $(\tG,\chi)$-equivariant distributions. Sometimes it will be useful to restrict ourselves in such a way, and sometimes we will work in the generality of all fibers at once.
\begin{notation} Consider the following two families of homeomorphisms:
\begin{itemize}
    \item For $\lambda \in \F$, let $\nu_\lambda:X\to X,$ be the homeomorphism defined by $$\nu_\lambda (A,v,\phi)=(A+\lambda v\otimes \phi,v,\phi)$$
    \item Let $f\in \F (x)^\times$, and fix a fiber $F$ of $\Delta$, at a characteristic polynomial coprime to $f$ (both to the numerator and to the denominator).
    Let $\rho _f:F\to F$ be the homeomorphism defined by $$\rho _f (A,v,\phi)=(A,f(A)v,\phi f(A))$$
\end{itemize}
\end{notation}

\begin{remark}
These maps are indeed homeomorphisms, as $\nu_\lambda$ and $\nu_{-\lambda}$ are inverse to each other, so are $\rho_f$ and $\rho_{f^{-1}}$, and all of these maps are defined in a continuous way. Moreover, as  one can check, the maps $\nu_\lambda,\rho_f$ both commute with the action of $\tG$.
\end{remark}

These constructions allow us to amplify the restriction on the support we get from Proposition \ref{perp} into a stronger condition.

\begin{notation} \label{notR}
Set $R:=\{(A,v,\phi)\in \mathrm{gl}(V)\times V\times V^* |\forall k \geq 0, \phi A^k v=0\}$
\end{notation}
\begin{proposition} \label{R}
Let $\xi$ be a $(\tG,\chi)$-equivariant distribution on $X$. Then $\xi$ is supported on $R$.
\end{proposition}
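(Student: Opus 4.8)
The plan is to leverage Proposition~\ref{perp} together with the two families of homeomorphisms $\nu_\lambda$ and $\rho_f$, exactly as outlined in the sketch. The key point is that both $\nu_\lambda$ and $\rho_f$ commute with the $\tG$-action and preserve the character $\chi$, so they send a $(\tG,\chi)$-equivariant distribution to another one. Therefore, if $\xi$ is $(\tG,\chi)$-equivariant, then $(\nu_\lambda)_*\xi$ and $(\rho_f)_*\xi$ are again $(\tG,\chi)$-equivariant, and hence by Proposition~\ref{perp} each of them is supported on $\{\phi v = 0\}$. Pulling back, this means $\xi$ is supported on $\nu_\lambda^{-1}(\{\phi v=0\})$ for every $\lambda$, and likewise — after localizing via the Localization principle to a fiber $F=\Delta^{-1}(h)$ with $h$ coprime to the numerator and denominator of $f$ — on $\rho_f^{-1}(\{\phi v=0\})$ for every admissible $f$.

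First I would extract the pointwise consequence. Suppose $(A,v,\phi)\in\mathrm{supp}(\xi)$. Applying the $\rho_f$ observation on the fiber through $A$: since $\rho_f(A,v,\phi)=(A,f(A)v,\phi f(A))$ lies in $\mathrm{supp}((\rho_f)_*\xi)$, which is contained in $\{\phi v = 0\}$, we get $(\phi f(A))(f(A)v) = \phi f(A)^2 v = 0$. This holds for $f$ ranging over a dense (in the relevant topology on coefficients) set of rational functions whose numerator and denominator are coprime to $\mathrm{ch}(A)$; in particular it holds for a Zariski-dense set of polynomials $f$, and since $f\mapsto \phi f(A)^2 v$ is polynomial in the coefficients of $f$, it holds for all polynomials $f$. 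Then polarizing — using $\phi f(A)v = \tfrac12\big(\phi(1+f(A))^2 v - \phi f(A)^2 v - \phi v\big)$ and $\phi v = 0$ from Proposition~\ref{perp}, valid since $\mathrm{char}(\F)\neq 2$ — we conclude $\phi f(A)v = 0$ for every polynomial $f$, i.e.\ $\phi A^k v = 0$ for all $k\geq 0$. That is exactly the condition defining $R$, so $(A,v,\phi)\in R$ and hence $\mathrm{supp}(\xi)\subseteq R$.

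A careful write-up needs to address two technical points. One is that $\rho_f$ is only defined on the fiber $F$ over a characteristic polynomial coprime to $f$, so one must first invoke the Localization principle (Theorem~\ref{LocPrin}) with respect to $\Delta$ to reduce to working on a single fiber $F=\Delta^{-1}(h)$; then for this fixed $h$ the map $\rho_f$ is available for all $f\in\F(x)^\times$ with numerator and denominator coprime to $h$, which is still enough $f$'s to run the density argument at every point of that fiber. The second is the density/continuity argument itself: one should make precise that for fixed $A$ the set of polynomials coprime to $\mathrm{ch}(A)$ is Zariski-open and nonempty in each space of polynomials of bounded degree, hence the identity $\phi f(A)^2 v = 0$ there forces it everywhere by polynomiality in the coefficients of $f$. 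I expect this density/Zariski-closure step to be the only real subtlety; the rest is a direct combination of Proposition~\ref{perp}, the $\tG$-equivariance of $\nu_\lambda$ and $\rho_f$, and the characteristic-$\neq 2$ polarization identity. Note that in fact only $\rho_f$ is needed for this particular proposition; $\nu_\lambda$ enters the argument later when one uses Theorem~\ref{LinAlg} to relate $R$ to the fibers of $\Delta$.
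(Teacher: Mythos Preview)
Your proposal is correct and follows essentially the same argument as the paper: localize to a fiber of $\Delta$, use that $\rho_f$ commutes with the $\tG$-action to transport Proposition~\ref{perp} and obtain $\phi f(A)^2 v=0$ for all $f$ coprime to $\mathrm{ch}(A)$, extend by Zariski density, and polarize (using $\mathrm{char}\,\F\neq 2$) to get $\phi A^k v=0$ for all $k$. Your observation that only $\rho_f$ (not $\nu_\lambda$) is needed here is also exactly right; the only cosmetic difference is that the paper phrases the argument as showing there is no $(\tG,\chi)$-equivariant distribution on $X\setminus R$, whereas you argue the support inclusion directly.
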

\begin{proof}
%Let $(A,v,\phi)$ be a point in the support of $\xi$. %Then $\nu_1(A,v,\phi)$ is a point in the support of the $(\tG,\chi)$-equivariant distribution $(\nu_1)_*(\xi)$. By theorem \ref{tr0}, we have $\tr A=\tr (A+v\otimes \phi)=0$, and so \begin{equation} \label{eq1} \phi v=\tr(v\otimes \phi)=0 \end{equation}

To prove the claim it is enough to show that there are no $(\tG,\chi)$-equivariant distributions on $X\setminus R$. By the Localization principle (\ref{LocPrin}), it is enough to show there are no $(\tG,\chi)$-equivariant distributions on $F\cap R^c$ for any fiber $F$ of $\Delta$ (Note that $R$ is $\tG$-invariant).
Let $\xi$ be such a distribution, and let $(A,v,\phi)$ be a point in $\mathrm{supp}(\xi)$.
Let $f\in \F[x]$ be a polynomial coprime to the characteristic polynomial of $A$.
Applying Proposition \ref{perp} to $\rho_f(A,v,\phi)=(A,f(A)v,\phi f(A))$ (after extension to $X$), we get that $\phi f(A)^2 v=0$.% for any point $(A,v,\phi)$ in the support.
\\Since the set of polynomials relatively prime to the characteristic polynomial of $A$ is Zarisky dense in $\F[x]$, we have that for any $f\in \F[x]$, $\phi f(A)^2 v=0$ holds. In particular for any $k\geq 0$
$$\phi A^k v= \frac{\phi(1+A^k)^2v-\phi(A^k)^2v-\phi v}{2}=0$$
Since we assumed that our distribution is supported on $R^c$, we get that it must be equal to $0$, as its support is empty.
\end{proof}

The importance of the following theorem, which will be proved in section \ref{LinAlgPf}, is already evident.

\begin{theorem} \label{LinAlg}
Let $V$ be a linear space over $\F$ of finite dimension $n$, $A\in \mathrm{gl} (V)$, $v\in V$, and $\phi \in V^*$. The following are equivalent.
\begin{enumerate}
    \item $\forall k\geq 0,\ \phi A^k v=0$.
    \item For all $\lambda\in\F$, $\mathrm{ch}(A+\lambda v\otimes \phi)=\mathrm{ch}A$.
    \item There exists $\lambda\in\F^\times$ such that $\mathrm{ch}(A+\lambda v\otimes \phi)=\mathrm{ch}A$.
\end{enumerate}

%TFAE
%\begin{enumerate}
%    \item $\forall k\geq 0,\ \phi A^k v=0$.
%    \item $\mathrm{ch}(A+v\otimes \phi)=\mathrm{ch}A$
%\end{enumerate}

\end{theorem}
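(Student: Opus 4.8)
The plan is to prove the chain of implications $(1)\Rightarrow(2)\Rightarrow(3)\Rightarrow(1)$, with the first implication being the substantive one. The implication $(2)\Rightarrow(3)$ is trivial once we note that $\F$ is infinite, so $\F^\times$ is nonempty. For $(1)\Rightarrow(2)$, the key observation is that the rank-one perturbation $v\otimes\phi$ interacts cleanly with the characteristic polynomial. I would use the identity
\[
\mathrm{ch}(A+\lambda v\otimes\phi)(x)=\det(xI-A-\lambda v\otimes\phi)=\det(xI-A)\cdot\bigl(1-\lambda\,\phi(xI-A)^{-1}v\bigr),
\]
valid over the field of rational functions $\F(x)$ (equivalently, after clearing denominators, an identity of polynomials). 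The scalar $\phi(xI-A)^{-1}v$ expands, via the Neumann-type series $(xI-A)^{-1}=\sum_{k\ge 0}x^{-k-1}A^k$ in $\F((x^{-1}))$, as $\sum_{k\ge 0}x^{-k-1}\phi A^k v$. Under hypothesis $(1)$ every coefficient $\phi A^k v$ vanishes, so $\phi(xI-A)^{-1}v=0$ identically, and hence $\mathrm{ch}(A+\lambda v\otimes\phi)=\mathrm{ch}(A)$ for every $\lambda$. One should be a little careful to phrase this purely algebraically, since we are in positive characteristic: rather than formal power series one can argue that $\phi\,\mathrm{adj}(xI-A)\,v$ is a polynomial in $x$ of degree at most $n-1$ whose coefficients are (up to sign and lower-order corrections) the $\phi A^k v$, together with the fact that $\mathrm{adj}(xI-A)=\mathrm{ch}(A)(x)\cdot(xI-A)^{-1}$ in $\F(x)$; then $\det(xI-A-\lambda v\otimes\phi)=\mathrm{ch}(A)(x)-\lambda\,\phi\,\mathrm{adj}(xI-A)\,v$, and the correction term vanishes under $(1)$.

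For $(3)\Rightarrow(1)$, suppose $\mathrm{ch}(A+\lambda v\otimes\phi)=\mathrm{ch}(A)$ for some fixed $\lambda\ne 0$. By the same determinant identity, $\lambda\,\phi\,\mathrm{adj}(xI-A)\,v=0$ as a polynomial in $x$, hence $\phi\,\mathrm{adj}(xI-A)\,v=0$. Now I want to extract from the vanishing of this single polynomial the vanishing of all the $\phi A^k v$. The cleanest route is to work modulo the minimal polynomial $m_A(x)$: writing $m_A(x)=x^d+c_{d-1}x^{d-1}+\cdots+c_0$, we have the matrix identity $\mathrm{adj}(xI-A)=\bigl(m_A(x)-m_A(A)\bigr)/(xI-A)$... more precisely, $\mathrm{adj}(xI-A)$ and $(xI-A)$ multiply to $\mathrm{ch}(A)(x)I$, and a standard manipulation shows $\phi\,\mathrm{adj}(xI-A)\,v$ determines, and is determined by, the generating function $\sum_{k\ge 0}\phi A^k v\, x^{-k-1}$ up to multiplication by the nonzero rational function $\mathrm{ch}(A)(x)$. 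So $\phi\,\mathrm{adj}(xI-A)\,v=0$ forces $\sum_{k\ge0}\phi A^kv\,x^{-k-1}=0$ in $\F((x^{-1}))$, i.e. $\phi A^kv=0$ for all $k$. Equivalently and more elementarily: the coefficients of $\phi\,\mathrm{adj}(xI-A)\,v$ are explicit $\F$-linear combinations of $\phi v,\phi Av,\dots,\phi A^{n-1}v$ via an invertible (unitriangular) matrix built from the coefficients of $\mathrm{ch}(A)$, so their vanishing gives $\phi A^kv=0$ for $0\le k\le n-1$, and then the Cayley--Hamilton relation propagates this to all $k\ge 0$.

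I expect the main obstacle to be purely expository rather than mathematical: making the passage between the polynomial $\phi\,\mathrm{adj}(xI-A)\,v$ and the list of scalars $\{\phi A^kv\}$ rigorous and self-contained in positive characteristic, where the power-series heuristic $(xI-A)^{-1}=\sum x^{-k-1}A^k$ must be replaced by a genuine identity in $\F[x]$ or in the rational function field. Concretely, the cleanest lemma to isolate is: for $A\in\mathrm{gl}(V)$ with $\mathrm{ch}(A)(x)=\sum_{j=0}^n a_j x^j$ (so $a_n=1$), one has $\mathrm{adj}(xI-A)=\sum_{j=0}^{n-1}x^j B_j$ where $B_j=\sum_{i=j+1}^{n} a_i A^{i-1-j}$; this is the classical recursion underlying the Faddeev--LeVerrier / Cayley--Hamilton argument and holds over any commutative ring. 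Pairing with $\phi$ and $v$ turns the transition matrix between $(\phi B_0 v,\dots,\phi B_{n-1}v)$ and $(\phi v,\dots,\phi A^{n-1}v)$ into an upper-triangular matrix with $1$'s on the antidiagonal, hence invertible — which delivers both directions at once and dispenses with any convergence or characteristic-zero input.
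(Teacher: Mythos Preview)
Your argument is correct and complete: the matrix determinant lemma gives $\mathrm{ch}(A+\lambda v\otimes\phi)(x)=\mathrm{ch}(A)(x)-\lambda\,\phi\,\mathrm{adj}(xI-A)\,v$, and the Faddeev--LeVerrier expansion $\mathrm{adj}(xI-A)=\sum_{j=0}^{n-1}x^jB_j$ with $B_j=\sum_{i>j}a_iA^{i-1-j}$ makes the transition between the coefficients $\phi B_j v$ and the scalars $\phi A^k v$ ($0\le k\le n-1$) an anti-triangular Hankel matrix with $1$'s on the antidiagonal, hence invertible over any ring. This handles both nontrivial implications at once, with Cayley--Hamilton extending $(1)$ to all $k\ge n$.

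The paper takes a genuinely different route. Rather than invoking the matrix determinant lemma, it proves directly a combinatorial identity (Theorem~\ref{CH}) for the principal-minor sums $c_k$: namely $c_k(A+v\otimes\phi)=c_k(A)+\sum_{j=0}^{k-1}(-1)^jc_{k-1-j}(A)\,\phi A^jv$, established by a ``rook placement'' expansion of the minors and an inductive recursion $C_k=c_{k-1}(A)I-AC_{k-1}$ for the coefficient matrices. From this formula the three equivalences drop out coefficient by coefficient (the $(3)\Rightarrow(1)$ step is an easy induction on $k$). What your approach buys is brevity and reliance only on standard linear-algebra identities; what the paper's approach buys is self-containment---it assumes nothing about adjugates or the determinant lemma, and in fact recovers Cayley--Hamilton as a free corollary (the case $k=n+1$ of the recursion). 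The two arguments are really computing the same polynomial $\phi\,\mathrm{adj}(xI-A)\,v$ by different bookkeeping.
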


\begin{corollary}
If $\xi$ is a $(\tG,\chi)$-equivariant distribution on one of the fibers $\Delta^{-1}(g)$ of $\Delta$, then $\nu_{\lambda}(i_*\xi)$ is also supported on $\Delta^{-1}(g)$, where $i$ is the inclusion of $\Delta^{-1}(g)$ into $X$. This way, we can regard $\nu_{\lambda}$ as an automorphism of $\Sc^*(\Delta^{-1}(g))^{({\tG},\chi)}$.
\end{corollary}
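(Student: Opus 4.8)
The plan is to combine Proposition \ref{R} with the implication (1) $\Rightarrow$ (2) of Theorem \ref{LinAlg}. First I would push $\xi$ forward along the closed embedding $i:\Delta^{-1}(g)\hookrightarrow X$ to obtain $i_*\xi\in\Sc^*(X)$, which by construction is supported on $\Delta^{-1}(g)$. Since $i$ is $\tG$-equivariant, $i_*\xi$ remains $(\tG,\chi)$-equivariant, so Proposition \ref{R} applies and tells us that $i_*\xi$ is in fact supported on $R$, hence on $\Delta^{-1}(g)\cap R$.

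Next I would analyze the support of $\nu_{\lambda}(i_*\xi)$. We have $\mathrm{supp}(\nu_{\lambda}(i_*\xi))=\nu_{\lambda}(\mathrm{supp}(i_*\xi))\ss \nu_{\lambda}(\Delta^{-1}(g)\cap R)$, so it suffices to check that $\nu_{\lambda}$ maps $\Delta^{-1}(g)\cap R$ into $\Delta^{-1}(g)$. But any point $(A,v,\phi)\in R$ satisfies condition (1) of Theorem \ref{LinAlg}, so by the implication (1) $\Rightarrow$ (2) we get $\mathrm{ch}(A+\lambda v\otimes\phi)=\mathrm{ch}(A)=g$; that is, $\nu_{\lambda}(A,v,\phi)=(A+\lambda v\otimes\phi,v,\phi)\in\Delta^{-1}(g)$. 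Therefore $\nu_{\lambda}(i_*\xi)$ is supported on the closed subset $\Delta^{-1}(g)\ss X$, and under the identification of $\Sc^*(\Delta^{-1}(g))$ with the space of distributions on $X$ supported on $\Delta^{-1}(g)$ (as in the Remark at the end of Section \ref{Preliminaries}), it corresponds to a distribution on $\Delta^{-1}(g)$.

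Finally I would observe that $\nu_{\lambda}$ commutes with the $\tG$-action (by the Remark following the definition of $\nu_{\lambda}$), so $\nu_{\lambda}(i_*\xi)$ is again $(\tG,\chi)$-equivariant: for $g\in\tG$ one has $g\cdot\nu_{\lambda}(i_*\xi)=\nu_{\lambda}(g\cdot i_*\xi)=\chi(g)\,\nu_{\lambda}(i_*\xi)$. Hence $\xi\mapsto\nu_{\lambda}(i_*\xi)$ is a well-defined linear endomorphism of $\Sc^*(\Delta^{-1}(g))^{(\tG,\chi)}$, and since $\nu_{-\lambda}$ furnishes a two-sided inverse, it is an automorphism. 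There is no real obstacle in the argument; the only point requiring care is the bookkeeping between a distribution on $\Delta^{-1}(g)$ and its extension by zero to $X$, together with the observation that the $(\tG,\chi)$-equivariance needed to invoke Proposition \ref{R} is preserved under this extension because $i$ is $\tG$-equivariant.
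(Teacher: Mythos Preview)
Your argument is correct and is exactly the intended one: the paper states this corollary without proof immediately after Theorem~\ref{LinAlg}, and the deduction you wrote out---extend by zero to $X$, invoke Proposition~\ref{R} to get support in $R$, then use (1)$\Rightarrow$(2) of Theorem~\ref{LinAlg} to see that $\nu_\lambda$ preserves the fiber $\Delta^{-1}(g)$ on $R$---is precisely what the paper has in mind. Your care with the bookkeeping (closed embedding, $\tG$-equivariance of $i_*$, inverse $\nu_{-\lambda}$) is appropriate and nothing is missing.
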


\subsection{Stratification}
For any $g\in \mathrm{gl}(\F)$, let $Y_g$ be the subspace of $\mathrm{gl}(\F)$ consisting of matrices with characteristic polynomial $g$.
%, where $g$ is a power of an irreducible polynomial, and the second coefficient of $g$ (corresponding to the trace) is $0$.
By the Localization principle (\ref{LocPrin}) and the previous theorems, it is enough for theorem \ref{main} to prove that any $(\tG,\chi)$-equivariant distribution on $\Delta^{-1}(g)=Y_g\times V\times V^*$ is $0$, for any $g$ as above.

The strategy now will be to stratify $Y_g$ and restrict stratum by stratum the possible support for a  $(\tG,\chi)$-equivariant distribution (note that $Y_g$ is a union of finitely many $\tG$ orbits).

 \begin{notation}
Denote by $P_i$ the union of all $\tG$-orbits of $Y_g$ of dimension at most $i$, and let $R_i:=R\cap (P_i\times V \times V^*)$ (where $R$ is as in Notation \ref{notR}). Also, for any open $\tG$-orbit $O$ of $P_i$ set $$\tO:=(O\times V \times V^*)\cap\bigcap_{\lambda\in\F}\nu_\lambda^{-1}(R_i)$$
Note that $P_i$ are Zariski closed inside $Y_g$, $P_k=Y_g$ for $k$ big enough, and $P_{-1}=\emptyset$.
 \end{notation}
 We focus ourselves to proving the following claim, which essentially deals with a single orbit of $P_i$:
\begin{claim} \label{orbit}
Let $O$ be an open $\tG$-orbit of $P_i$. Suppose $\xi$ is a $(\tG,\chi)$-equivariant distribution on $\Delta^{-1}(g)$ such that
$$\mathrm{supp}(\xi) \ss \tO$$
and
$$\mathrm{supp}(\Fou_{V \oplus V^*}(\xi)) \ss \tO$$
Then $\xi=0$.
\end{claim}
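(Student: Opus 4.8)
The plan is to reduce Claim~\ref{orbit} to a statement about distributions on a single fiber of $\Delta$ over a companion matrix, and then finish using the Rallis--Schiffmann theorem together with a homogeneity argument. First I would use Frobenius descent (Theorem~\ref{Frob}) applied to the projection $O \times V \times V^* \to O$, together with part (iii) of that theorem so that it commutes with $\Fou_{V\oplus V^*}$. Since $O$ is a single $\tG$-orbit, picking a base point $A \in O$ reduces the question to distributions on the fiber $\{A\} \times V \times V^*$ which are equivariant under $\tC_A$, the centralizer of $A$ in $\tG$, with character $\chi$, and whose support (and the support of whose Fourier transform) lies in $\tO \cap (\{A\}\times V\times V^*)$. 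By the definition of $\tO$ and the characterization in Theorem~\ref{LinAlg}, this intersection is contained in $R_A = \{(v,\phi) \mid \forall k\geq 0,\ \phi A^k v = 0\}$; one also needs the promised facts that $Q_A \subseteq R_A$ and that membership in $\tO$ forces $(v,\phi)\in Q_A$, so the support is actually inside $Q_A$ — but for the distributional argument $R_A$ is what matters.

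Next I would exploit the direct-sum behavior. Writing the rational canonical form $A = \bigoplus_j A_j$ (Theorem~\ref{Rational}), one has $R_{A_1} \oplus R_{A_2} \subseteq R_{A_1\oplus A_2}$ and correspondingly a product decomposition of the centralizer $\tC_A$. Using the Localization principle (Corollary~\ref{LocPrinCor}) and Corollary~\ref{Product}, together with Proposition~\ref{2Four} to keep control of the Fourier transform on each factor separately, I would reduce to the case where $A$ is a single companion matrix, so $\mathrm{ch}A = f(x)^s$ with $f$ irreducible. Here I would introduce the descending filtration $U_i = f(A)^i V$ and its dual $U_i^* = U_{s-i}^\perp = f(A^*)^i V^*$, and use the claimed identity $R_A = \bigcup_{i=0}^{s} U_i \oplus U_{s-i}^*$.

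The endgame is an induction on $s$. Restrict the distribution $\xi$ to the open piece $U_0 \oplus U_s = V \oplus U_s$ of $R_A$. On this space the quadratic form $Q((v,\phi)) = \phi v$ is identically zero (since $U_s = f(A)^s V = 0$... more precisely, one checks $Q$ vanishes on the relevant isotropic piece), so both $\xi$ and its Fourier transform are supported in the zero set of $Q$; Theorem~\ref{Metaplectic} then forces $\xi$ restricted there to be abs-homogeneous of degree $\tfrac12\dim$. On the other hand, $(\tC_A,\chi)$-equivariance pins down the homogeneity degree to be $0$ on the appropriate subspace (invariance under homothety), and the two are incompatible unless that restriction vanishes. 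Having killed $\xi$ near this stratum of $R_A$, I would pass to the complementary strata, which correspond to smaller $s$ after an identification, and conclude by the inductive hypothesis.

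The main obstacle I expect is the bookkeeping around the Fourier transform under all these reductions: one must ensure that the hypothesis ``$\mathrm{supp}(\Fou_{V\oplus V^*}\xi) \subseteq \tO$'' survives Frobenius descent, the direct-sum decomposition, and the restriction to the subspace $U_0 \oplus U_s$ — this is exactly where Theorem~\ref{Frob}(iii) and Proposition~\ref{2Four} are needed, and getting the isotropic/coisotropic subspaces and the self-dual measures to match up is delicate. The secondary difficulty is proving the linear-algebra identities $Q_A \subseteq R_A$, $Q_{A_1}\oplus Q_{A_2} \subseteq Q_{A_1\oplus A_2}$, and $R_A = \bigcup_i U_i \oplus U_{s-i}^*$, and matching the homogeneity degree forced by $\tC_A$-equivariance against the degree $\tfrac12\dim W$ coming from Rallis--Schiffmann; these are elementary but must be done carefully over $\F$ in positive characteristic.
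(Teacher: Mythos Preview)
Your overall architecture matches the paper's, but there is a real gap in the reduction to companion matrices. You write that $R_{A_1}\oplus R_{A_2}\subseteq R_{A_1\oplus A_2}$ and invoke ``a product decomposition of the centralizer $\tC_A$.'' Neither does the job. To factor the problem you need the support of the descended distribution to lie inside a \emph{product}, i.e.\ you need $R_{A_1\oplus A_2}\subseteq R_{A_1}\times R_{A_2}$, and this inclusion is \emph{false} (take $A_1=A_2=0$ on $\F$, $v=(1,1)$, $\phi=(1,-1)$). This is exactly why the paper keeps track of $Q_A$ rather than $R_A$ at this stage: one does have $Q_{A_1\oplus A_2}\subseteq Q_{A_1}\times Q_{A_2}$, and the fact that the fiber of $\tO$ over $A$ lies in $Q_A$ (not merely in $R_A$) is precisely what is used --- so your remark that ``for the distributional argument $R_A$ is what matters'' is the wrong lesson. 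Likewise, $\tC_{A_1\oplus A_2}$ is in general strictly larger than $\tC_{A_1}\times\tC_{A_2}$ (again take $A_1=A_2$), so Corollary~\ref{Product} does not apply.

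The paper gets around both obstructions by reformulating the target not as ``$(\tC_A,\chi)$-equivariant implies zero'' but as ``$C_A$-invariant with support and Fourier support in $Q_A$ implies $\tC_A$-invariant'' (the property called \emph{nice}). This version is stable under direct sums: from $Q_{A_1\oplus A_2}\subseteq Q_{A_1}\times Q_{A_2}$ and Proposition~\ref{2Four} one deduces $\tC_{A_1}$- and $\tC_{A_2}$-invariance separately, and together with the assumed $C_{A_1\oplus A_2}$-invariance these generate $\tC_{A_1\oplus A_2}$-invariance --- no product decomposition of the full centralizer is needed. Only \emph{after} reducing to a single companion block does one pass to $R_A$ via $Q_A\subseteq R_A$ and use the filtration. (Your endgame is also a bit garbled: $U_0\oplus U_s$ is not an open piece of anything; the paper restricts to the open set $(V\setminus U_1)\oplus V^*$, where the support condition forces $\phi=0$. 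But the Rallis--Schiffmann versus homothety-invariance contradiction you describe is the right idea.)
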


Using this claim, the main theorem (Theorem \ref{main}) is easily proven, in the following way:

\begin{theorem}
Any $(\tG,\chi)$-equivariant distribution on $X$ is zero.
\end{theorem}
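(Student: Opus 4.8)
The plan is to deduce the Main Theorem from Claim~\ref{orbit} via the stratification already set up, by a downward induction on the orbit-dimension index $i$. By the Localization principle (Theorem~\ref{LocPrin}, in the form of Corollary~\ref{LocPrinCor}) applied to $\Delta$, together with Propositions~\ref{perp} and~\ref{R}, it suffices to fix a polynomial $g$ and show that any $(\tG,\chi)$-equivariant distribution $\xi$ on $\Delta^{-1}(g)=Y_g\times V\times V^*$ with $\mathrm{supp}(\xi)\ss R\cap(Y_g\times V\times V^*)$ is zero. Since $Y_g$ is a union of finitely many $\tG$-orbits, $P_k=Y_g$ for $k$ large and $P_{-1}=\emptyset$, it is enough to prove by downward induction on $i$ the statement: every $(\tG,\chi)$-equivariant $\xi$ on $\Delta^{-1}(g)$ with $\mathrm{supp}(\xi)\ss P_i\times V\times V^*$ is zero. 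The base case $i=k$ gives the full space and the induction terminates at $i=-1$ with the empty support, hence $\xi=0$.

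First I would carry out the induction step: assume the claim for $i-1$, i.e.\ any such $\xi$ supported on $P_{i-1}\times V\times V^*$ vanishes, and let $\xi$ be supported on $P_i\times V\times V^*$. We want to show $\xi$ is in fact supported on $P_{i-1}\times V\times V^*$, which by the inductive hypothesis forces $\xi=0$. The set $P_i\setminus P_{i-1}$ is an open subset of $P_i$ consisting of the finitely many $\tG$-orbits of $Y_g$ of dimension exactly $i$, so it is a finite disjoint union of open $\tG$-orbits $O$; by additivity it is enough to show that the restriction of $\xi$ to $O\times V\times V^*$ is zero for each such $O$. Here one first invokes Proposition~\ref{R} (valid on the fiber $\Delta^{-1}(g)$ via the Corollary following Theorem~\ref{LinAlg}) to see that $\mathrm{supp}(\xi)\ss R_i$, and then — crucially — applies the $\nu_\lambda$-automorphisms: since each $\nu_\lambda$ is an automorphism of $\Sc^*(\Delta^{-1}(g))^{(\tG,\chi)}$ (again by the Corollary after Theorem~\ref{LinAlg}), the distribution $\nu_\lambda(\xi)$ is still of the same type and still supported on $R_i$, which forces $\mathrm{supp}(\xi)\ss \bigcap_{\lambda\in\F}\nu_\lambda^{-1}(R_i)$, hence on $\tO$ after restricting to the open orbit $O$.

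The remaining input needed is a statement about the Fourier side. To apply Claim~\ref{orbit} to the restriction $\xi|_{O\times V\times V^*}$ we also need $\mathrm{supp}(\Fou_{V\oplus V^*}(\xi))\ss\tO$; this is where one uses that $\Fou_{V\oplus V^*}$ acts only on the $V\oplus V^*$ coordinates and commutes with the $\tG$-action (the bilinear form is $\tG$-invariant since $\tG$ acts on $V\oplus V^*$ preserving the pairing up to the sign which is absorbed in $\chi$), so $\Fou_{V\oplus V^*}(\xi)$ is again $(\tG,\chi)$-equivariant and — by Propositions~\ref{perp} and~\ref{R} and the $\nu_\lambda$-argument, applied to $\Fou_{V\oplus V^*}(\xi)$ in place of $\xi$ — its support also lies in $\bigcap_\lambda\nu_\lambda^{-1}(R_i)$ inside $\Delta^{-1}(g)$, and hence in $\tO$ once we localise to $O$. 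With both hypotheses of Claim~\ref{orbit} verified, the claim gives $\xi|_{O\times V\times V^*}=0$. Doing this for every open orbit $O$ of dimension $i$ shows $\mathrm{supp}(\xi)\ss P_{i-1}\times V\times V^*$, completing the induction step; the induction then collapses the support all the way down to $P_{-1}=\emptyset$, so $\xi=0$.

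The main obstacle is purely bookkeeping rather than a new idea: one must be careful that the reductions via Proposition~\ref{perp}, Proposition~\ref{R} and the $\nu_\lambda$-automorphisms are all compatible with first localising to the fiber $\Delta^{-1}(g)$ and then restricting to an open orbit, and that applying the same chain of reductions to $\Fou_{V\oplus V^*}(\xi)$ is legitimate — in particular that $\Fou_{V\oplus V^*}$ preserves $(\tG,\chi)$-equivariance and commutes with each $\nu_\lambda$ as an operator on $\Sc^*(\Delta^{-1}(g))$. Granting these compatibilities, which are routine given the commutation statements already recorded, the theorem follows formally from Claim~\ref{orbit}; the real content of the paper lies in the proof of that claim (and of Theorem~\ref{LinAlg}), not in this final deduction.
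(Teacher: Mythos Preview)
Your overall strategy matches the paper's: localize to a fiber $\Delta^{-1}(g)$, stratify by orbit dimension, and feed each open orbit into Claim~\ref{orbit}. However, the way you set up the induction introduces a genuine gap.

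You phrase the inductive statement as ``every $(\tG,\chi)$-equivariant $\xi$ with $\mathrm{supp}(\xi)\subseteq P_i\times V\times V^*$ is zero'' and assume it for $i-1$. In the step you assert that $\nu_\lambda(\xi)$ is ``still supported on $R_i$''. This is not justified: Proposition~\ref{R} only gives $\mathrm{supp}(\nu_\lambda(\xi))\subseteq R$, and your hypothesis $\mathrm{supp}(\xi)\subseteq P_i\times V\times V^*$ does \emph{not} transport along $\nu_\lambda$, because $\nu_\lambda$ does not preserve $P_i\times V\times V^*$. For a concrete failure take $n=2$, $A=0\in P_0$, $v=e_1$, $\phi=e_2^*$: then $(A,v,\phi)\in R_0$, but $\nu_1(A,v,\phi)=(e_1\otimes e_2^*,\,e_1,\,e_2^*)$ has first coordinate a nonzero nilpotent, which lies in a $2$-dimensional orbit and is not in $P_0$. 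Your inductive hypothesis $S_{i-1}$ says nothing about $\nu_\lambda(\xi)$ (whose support could protrude into higher strata), so you cannot conclude $\mathrm{supp}(\xi)\subseteq\bigcap_\lambda\nu_\lambda^{-1}(R_i)$, and hence cannot verify the hypothesis $\mathrm{supp}(\zeta)\subseteq\tO$ of Claim~\ref{orbit}.

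The paper avoids this by inverting the quantifiers: it proves, by downward induction on $i$, the \emph{universal} statement ``every $(\tG,\chi)$-equivariant distribution on $\Delta^{-1}(g)$ is supported in $R_i$''. The base case (large $i$) is exactly Proposition~\ref{R}, and the goal is $i=-1$. Since this hypothesis applies to \emph{all} equivariant distributions, one may apply it directly to $\nu_\lambda(\xi)$ and to $\Fou_{V\oplus V^*}(\xi)$ (both still equivariant) to get their supports in $R_i$, hence $\mathrm{supp}(\xi)\subseteq\bigcap_\lambda\nu_\lambda^{-1}(R_i)$ and likewise for the Fourier transform, which is precisely what Claim~\ref{orbit} requires. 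Reformulating your induction this way closes the gap with no further input. (Incidentally, you do not need $\Fou_{V\oplus V^*}$ to commute with $\nu_\lambda$ --- it does not --- only that $\Fou_{V\oplus V^*}(\xi)$ is again $(\tG,\chi)$-equivariant, which you correctly note.)
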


\begin{proof}
We prove by downward induction the following claim - any $(\tG,\chi)$-equivariant distribution on $\Delta^{-1}(g)$ is supported inside $R_i$. This claim for $i$ big enough is exactly proposition \ref{R}, and the claim for $i=-1$ implies the theorem by the Localization principle, (\ref{LocPrin}), as we have already discussed.
For the induction step, take such a distribution $\xi$. As $P_i\setminus P_{i-1}$ is a disjoint union of open orbits, it is enough to show that the restriction of $\xi$ to any $O\times V\times V^*$, where $O$ is an open orbit of $P_i$, is zero. Let $\zeta=\xi|_{O\times V \times V^*}$ be such a restriction.
By the induction hypothesis, we know that  $\mathrm{supp}(\zeta) \ss \tO$ and $\mathrm{supp}(\Fou_{V \oplus V^*}(\zeta)) \ss \tO$ (Fourier transform doesn't change the projection of the support on the $A$-coordinate). Hence by Claim \ref{orbit}, $\zeta=0$.
\end{proof}
\subsection{Handling a single orbit}
In this section we will finish the proof of the main theorem (Theorem \ref{main}) by proving Claim \ref{orbit}. we shall keep all previous notations, unless mentioned otherwise.
\\

\begin{notation}
For $A\in \mathrm{gl}(V)$, set $$Q_A:=\{(v,\phi)\in V\oplus V^*|v\otimes\phi\in [A,\mathrm {gl} (V)]\}$$ and $$R_A:=\{(v,\phi)\in V\oplus V^*|\forall k\geq 0,\ \phi A^k v=0\}=\{(v,\phi)\in V\oplus V^*|(A,v,\phi)\in R\}$$
\end{notation}

\begin{proposition}
If $(A,v,\phi)\in\tO$ then $(v,\phi)\ss Q_A$.
\end{proposition}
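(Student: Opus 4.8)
The plan is to unwind the definition of $\tO$ and use Theorem \ref{LinAlg} to translate the ``$\nu_\lambda$-stability'' condition into the linear-algebra statement that $v\otimes\phi\in[A,\mathrm{gl}(V)]$. Recall that
$$\tO=(O\times V\times V^*)\cap\bigcap_{\lambda\in\F}\nu_\lambda^{-1}(R_i),$$
so if $(A,v,\phi)\in\tO$ then for every $\lambda\in\F$ we have $\nu_\lambda(A,v,\phi)=(A+\lambda v\otimes\phi,v,\phi)\in R_i\subseteq R\subseteq P_i\times V\times V^*$. In particular the characteristic polynomial is unchanged: since $A\in O\subseteq Y_g$, the point $(A,v,\phi)$ lies in the fiber $\Delta^{-1}(g)$, and membership in $R_i$ forces $A+\lambda v\otimes\phi$ to have characteristic polynomial $g$ as well. (Alternatively one can read this directly off $(A,v,\phi)\in R$, i.e. $\phi A^k v=0$ for all $k\ge 0$, which is condition (1) of Theorem \ref{LinAlg}.)

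Next I would apply Theorem \ref{LinAlg}: the equivalence of (1) and (2) there says precisely that $\mathrm{ch}(A+\lambda v\otimes\phi)=\mathrm{ch}(A)$ for all $\lambda\in\F$ if and only if $\phi A^k v=0$ for all $k\ge 0$. Having established either of these, it remains to deduce $v\otimes\phi\in[A,\mathrm{gl}(V)]$. The cleanest route is to note that $[A,\mathrm{gl}(V)]=\mathrm{Im}(\mathrm{ad}_A)$ and, by the trace pairing on $\mathrm{gl}(V)$, $\mathrm{Im}(\mathrm{ad}_A)^\perp=\Ker(\mathrm{ad}_{A^t})=$ the centralizer of $A^t$; equivalently, a matrix $B$ lies in $[A,\mathrm{gl}(V)]$ iff $\tr(BC)=0$ for every $C$ commuting with $A$. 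So I must show $\tr((v\otimes\phi)C)=\phi C v=0$ for every $C$ in the centralizer $Z(A)$ of $A$. Since $Z(A)$ is the algebra $\F[A]$ precisely when the minimal and characteristic polynomials of $A$ agree (which need not hold here), I cannot in general write $C$ as a polynomial in $A$ — this is the main obstacle.

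To get around it, I would use the hypothesis more carefully: the condition $\phi A^k v=0$ for all $k$ is equivalent to saying that the $\F[x]$-module generated by $v$ (with $x$ acting as $A$) and the one generated by $\phi$ (with $x$ acting as $A^t$) are orthogonal under the pairing. Decompose $V$ into its rational canonical form (Theorem \ref{Rational}) as a direct sum of $A$-cyclic subspaces; writing $v=\sum v_j$ and $\phi=\sum\phi_j$ accordingly, one checks that $v\otimes\phi\in[A,\mathrm{gl}(V)]$ block by block, where the blockwise statement is exactly the cyclic case in which $Z(A)=\F[A]$ and the computation $\phi Cv = \sum_k c_k\,\phi A^k v=0$ goes through. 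The one subtlety is cross terms $v_j\otimes\phi_l$ with $j\neq l$: these map into $\mathrm{Hom}$ between distinct cyclic blocks, and one must verify they also lie in $[A,\mathrm{gl}(V)]$ using that they are ``intertwiner-orthogonal'' — i.e. the pairing vanishes against every $A$-equivariant map between the two blocks, which follows from the vanishing $\phi_l A^k v_j=0$ together with the structure of $\mathrm{Hom}_{\F[x]}$ of cyclic modules. Assembling these pieces gives $(v,\phi)\in Q_A$, completing the proof. (I expect the authors may instead phrase this via the identity $[A,\mathrm{gl}(V)]^\perp=Z(A)$ under the trace form and a direct reduction to $Z(A)=\F[A]$ in the companion-matrix case, mirroring the reduction used later for Claim \ref{orbit}.)
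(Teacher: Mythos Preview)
Your argument has a genuine gap. After unwinding the definition of $\tO$ you correctly note that $A+\lambda v\otimes\phi\in P_i$ for every $\lambda$, but you then discard this and try to deduce $v\otimes\phi\in[A,\mathrm{gl}(V)]$ purely from the condition $\phi A^k v=0$ for all $k$ (i.e.\ from $(A,v,\phi)\in R$). That implication is simply false: the paper proves $Q_A\subseteq R_A$, not the converse. For a concrete counterexample take $A=0$ on $V=\F^2$, $v=(1,1)^t$, $\phi=(1,-1)$. Then $\phi v=0$ so $(v,\phi)\in R_A$, but $[A,\mathrm{gl}(V)]=0$ while $v\otimes\phi\neq 0$, so $(v,\phi)\notin Q_A$. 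Your block-by-block reduction fails already at the diagonal blocks: from $\phi A^k v=\sum_j \phi_j A_j^k v_j=0$ you cannot conclude $\phi_j A_j^k v_j=0$ for each $j$ separately, so the cyclic-case computation never gets off the ground.

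The paper's proof uses exactly the piece of information you threw away: since the whole line $\lambda\mapsto A+\lambda v\otimes\phi$ lies in $P_i$ and $O$ is an \emph{open} orbit in $P_i$, a neighbourhood of $A$ on this line lies in $O$ itself. Hence the direction $v\otimes\phi$ is tangent to the orbit $O$ at $A$, and that tangent space is $[A,\mathrm{gl}(V)]$. The stratification by orbit dimension (encoded in $P_i$) is doing real work here and cannot be replaced by the characteristic-polynomial condition alone.
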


\begin{proof}
Consider a point $(A,v,\phi)\in \tO$.
The Zariski tangent space to $O$ at point $A$ is $[A,\mathrm {gl} (V)]$, and since a Zariski neighborhood of $A$ inside the line $A+\lambda v \otimes\phi$ is contained in $O$ (by \ref{LinAlg}), we have that $v\otimes\phi\ss [A,\mathrm {gl} (V)]$.
\end{proof}

We also have the following theorem, which we will prove in section \ref{LinAlgPf}.

\begin{theorem} \label{QA-RA}
$Q_A \ss R_A$.
\end{theorem}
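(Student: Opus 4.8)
The plan is to prove $Q_A \ss R_A$ purely by linear algebra, so this will be a self-contained matrix computation. Concretely, I want to show: if $v \otimes \phi = [A, B] = AB - BA$ for some $B \in \mathrm{gl}(V)$, then $\phi A^k v = 0$ for every $k \geq 0$. The key observation is to take traces: $\phi A^k v = \tr(A^k \cdot (v \otimes \phi))$, using the standard identity $\tr(M (v \otimes \phi)) = \phi M v$ (which follows from the paper's notational conventions, since $v \otimes \phi$ is the rank-one operator $u \mapsto \phi(u) v$). So the claim reduces to showing $\tr(A^k [A,B]) = 0$ for all $k \geq 0$.

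Now $\tr(A^k[A,B]) = \tr(A^{k+1}B) - \tr(A^k B A) = \tr(A^{k+1}B) - \tr(A^{k+1}B) = 0$, where in the last step I use cyclicity of the trace ($\tr(A^k B A) = \tr(A \cdot A^k B) = \tr(A^{k+1}B)$). That is the whole argument: the trace of $A^k$ times any commutator $[A,B]$ vanishes because $A^k$ commutes with $A$. So the steps in order are: (1) recall $\phi M v = \tr(M(v\otimes\phi))$; (2) given $(v,\phi)\in Q_A$, write $v\otimes\phi = [A,B]$; (3) compute $\phi A^k v = \tr(A^k[A,B]) = \tr([A^k, A]B)$ — wait, more directly $\tr(A^k[A,B]) = \tr(A^k(AB-BA)) = \tr(A^kAB) - \tr(A^kBA) = \tr(A^{k+1}B) - \tr(A^{k+1}B) = 0$ by cyclicity; (4) conclude $(v,\phi)\in R_A$.

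There is essentially no obstacle here — the only thing to be slightly careful about is the bookkeeping with the identification of $v \otimes \phi$ as an operator versus the pairing, and making sure the trace identity $\tr(M(v\otimes\phi)) = \phi M v$ is invoked correctly (it is: $v \otimes \phi$ has rank one with image spanned by $v$, so $M(v\otimes\phi)$ sends $u \mapsto \phi(u)\, Mv$, whose trace is the coefficient of $v$ in $Mv$ scaled by $\phi$, i.e.\ $\phi(Mv) = \phi M v$). I would also note explicitly that $k=0$ is included, giving $\phi v = \tr(v\otimes\phi) = \tr([A,B]) = 0$ automatically, consistent with Proposition~\ref{perp}. No characteristic or field hypotheses are needed for this direction; it holds over any field (and the division by $2$ used elsewhere does not appear here). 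The proof is therefore a short direct computation, and I would present it as such.

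\begin{proof}
Recall that for $M \in \mathrm{gl}(V)$, $v \in V$ and $\phi \in V^*$ we have $\tr\bigl(M(v\otimes\phi)\bigr) = \phi M v$: indeed $M(v\otimes\phi)$ is the operator $u \mapsto \phi(u)\,Mv$, which is rank one with trace equal to the coefficient of $Mv$ along the line through $v$ weighted by $\phi$, namely $\phi(Mv)$.

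Let $(v,\phi) \in Q_A$, so that $v\otimes\phi = [A,B] = AB - BA$ for some $B \in \mathrm{gl}(V)$. Then for every $k \geq 0$, using cyclicity of the trace,
$$\phi A^k v = \tr\bigl(A^k(v\otimes\phi)\bigr) = \tr\bigl(A^k(AB-BA)\bigr) = \tr(A^{k+1}B) - \tr(A^kBA) = \tr(A^{k+1}B) - \tr(A^{k+1}B) = 0.$$
Hence $(v,\phi) \in R_A$, and therefore $Q_A \ss R_A$.
\end{proof}
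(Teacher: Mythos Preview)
Your proof is correct and considerably shorter than the paper's. The paper proves Theorem~\ref{QA-RA} indirectly via the identity of Theorem~\ref{CH}, namely
\[
c_k(A+\lambda v\otimes\phi)=c_k(A)+\lambda\sum_{j=0}^{k-1}(-1)^j c_{k-1-j}(A)\,\phi A^j v,
\]
together with the geometric observation that $[A,\mathrm{gl}(V)]$ is the Zariski tangent space to the conjugacy class of $A$; since the characteristic polynomial is conjugation-invariant, its directional derivative along $v\otimes\phi$ vanishes, and linearity in $\lambda$ then forces $c_k(A+\lambda v\otimes\phi)=c_k(A)$, whence $(v,\phi)\in R_A$ by Theorem~\ref{LinAlg}. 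Your trace computation bypasses all of this: the single identity $\phi A^k v=\tr(A^k(v\otimes\phi))=\tr(A^k[A,B])=0$ via cyclicity settles the matter in one line. The paper's route has the advantage that Theorem~\ref{CH} simultaneously yields Theorem~\ref{LinAlg}, so the two results share infrastructure; your route gives Theorem~\ref{QA-RA} on its own with no machinery at all, and works over any commutative ring.
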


\begin{notation}
Let $A \in \mathrm{gl}(V)$. We denote by
${C_A}$ the stabilizer of $A$ in $G$ and by ${\widetilde{C}_A}$ the
stabilizer of $A$ in $\tG$.
\end{notation}
It is known that $C_A$ is unimodular and hence $\widetilde{C}_A$ is
also unimodular.

Claim \ref{orbit} follows now from Frobenius descent (\ref{Frob}) and the
following proposition.

\begin{proposition} \label{SubKey}
Let $A \in \mathrm{gl}(V)$. Let $\eta \in \Sc^*(V \oplus V^*)^{C_A}$. Suppose that both $\eta$ and
$\Fou_{V\oplus V^*}(\eta)$ are supported in $Q_A$. Then $\eta \in \Sc^*(V \oplus
V^*)^{\widetilde{C}_A}$.
\end{proposition}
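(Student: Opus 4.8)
The plan is to reduce the statement about $\widetilde C_A$-invariance to $C_A$-invariance together with invariance under one extra element, namely a generator $\sigma$ of the $S_2$-factor that happens to lie in $\widetilde C_A$ (or, more precisely, a coset representative of $\widetilde C_A / C_A$ when this quotient is nontrivial). Since $\widetilde C_A$ is generated by $C_A$ and such a $\sigma$, and $\eta$ is already $C_A$-invariant by hypothesis, it suffices to show $\sigma.\eta = \chi(\sigma)\,\eta = -\eta$ — wait, we want $\widetilde C_A$-invariance (no $\chi$-twist here, since the proposition asks for $\eta \in \Sc^*(V\oplus V^*)^{\widetilde C_A}$), so one must be careful: I expect that in the intended application $\chi$ restricted to $\widetilde C_A$ is trivial, or that the relevant $\sigma$ can be chosen inside $C_A \cdot (\text{transpositions})$ in a way compatible with $\chi$. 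I would first pin down exactly which element $\sigma$ witnesses $\widetilde C_A \neq C_A$: for $A$ a companion matrix (the case Claim~\ref{orbit} is ultimately reduced to), $A$ and $A^t$ are conjugate, so there is $g\in G$ with $gA^tg^{-1}=A$, and then $\sigma := (g,-1)\in\widetilde C_A$; the goal becomes $\sigma.\eta = \eta$.

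The key mechanism should be the Rallis–Schiffmann theorem (Theorem~\ref{Metaplectic}) applied on a suitable subspace. First I would record from Theorem~\ref{QA-RA} that $Q_A \subseteq R_A$, and from the definition of $R_A$ that $R_A \subseteq Z_Q$, the zero set of the quadratic form $Q((v,\phi))=\phi v$ (take $k=0$). Hence both $\eta$ and $\Fou_{V\oplus V^*}(\eta)$ are supported in $Z_Q$, so by Rallis–Schiffmann $\eta$ is abs-homogeneous of degree $\frac12\dim(V\oplus V^*)=n$. Next I would compare the two distributions $\eta$ and $\sigma.\eta$. Both are $C_A$-invariant (since $\sigma$ normalizes $C_A$), both are supported in $Q_A$ (which is $\widetilde C_A$-stable, being defined via $[A,\mathrm{gl}(V)]$ and the symmetry of $v\otimes\phi \leftrightarrow \phi^t\otimes v^t$ under transposition), and the Fourier transform intertwines the $\sigma$-action appropriately, so $\Fou(\sigma.\eta)$ is also supported in $Q_A$. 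Therefore $\eta - \sigma.\eta$ (or $\eta + \sigma.\eta$, depending on signs) satisfies the same support hypotheses and is again abs-homogeneous of degree $n$.

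The real work is then to show this difference vanishes, and here I expect the argument to mirror the sketch's endgame: pass to the filtration $U_i = f(A)^iV$, $U_i^* = U_{s-i}^\perp$, use $R_A = \bigcup_{i=0}^s U_i \oplus U_{s-i}^*$, and restrict to the extreme piece $U_0 \oplus U_s = V \oplus 0$ (or the opposite extreme), where $Q_A$ forces support at a single homogeneity-$0$ locus; on that piece $(\widetilde C_A,\chi)$-equivariance plus abs-homogeneity of degree $0$ forces the restriction to be a multiple of a $\delta$ at the origin, pinned down by the character, and by induction on $s$ one strips off pieces until nothing remains. Concretely I would: (a) reduce to $A$ a companion matrix by the decomposition $Q_{A_1}\oplus Q_{A_2}\subseteq Q_{A_1\oplus A_2}$ together with Corollary~\ref{Product} and Frobenius descent; (b) for companion $A$ with $\mathrm{ch}(A)=f^s$, set up the filtration and the description of $R_A$; (c) run the Rallis–Schiffmann / abs-homogeneity argument on $U_0\oplus U_s$ to kill that layer; (d) induct on $s$. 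The main obstacle I anticipate is step (b)–(c): verifying the precise identity $R_A=\bigcup_i U_i\oplus U_{s-i}^*$ and checking that the centralizer $\widetilde C_A$ acts on each layer $U_i\oplus U_{s-i}^*$ with the homogeneity bookkeeping working out so that $(\widetilde C_A,\chi)$-equivariance genuinely forces degree-$0$ (hence vanishing) on the top layer — this is where the characteristic-$p$ subtleties, if any, would surface, though the excerpt's remark that Rallis–Schiffmann's proof "works verbatim" suggests they do not.
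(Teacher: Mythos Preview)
Your plan is essentially the paper's own argument: reduce to companion matrices via a direct-sum lemma, then for a companion $A$ with $\mathrm{ch}(A)=f^s$ form $\zeta-T(\zeta)$ (your $\eta-\sigma.\eta$), use $Q_A\subseteq R_A=\bigcup_i U_i\oplus U_{s-i}^*$, apply Rallis--Schiffmann to force abs-homogeneity of degree $ks$, compare this with the $\F^\times$-homothety invariance on the extreme layer to strip it off, and induct on $s$. Two small corrections: the inclusion needed for the direct-sum step is the reverse of what you wrote, namely $Q_{A_1\oplus A_2}\subseteq Q_{A_1}\times Q_{A_2}$ (this is what lets a support condition on $Q_{A_1\oplus A_2}$ be pushed to the factors), and the reduction is carried out with Proposition~\ref{2Four} and the Localization principle (Theorem~\ref{LocPrin}) rather than Corollary~\ref{Product} or Frobenius descent.
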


We will call an element $A\in \mathrm{gl}(V)$ 'nice' if
the previous proposition holds for $A$. Namely, $A$ is 'nice' if any
distribution $\eta \in \Sc^*(V \oplus V^*)^{C_A}$ such that both
$\eta$ and $\Fou(\eta)$ are supported in $Q_A$ is also
$\widetilde{C}_A$-invariant.

\begin{lemma} \label{DirectSum}
Let $A_1 \in \mathrm{gl}(\F^k)$ and $A_2 \in \mathrm{gl}(\F^l)$ be nice. Then $A_1 \oplus A_2 \in \mathrm{gl}(\F^{k+l})$ is nice.
\end{lemma}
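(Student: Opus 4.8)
The plan is to reduce the statement for $A=A_1\oplus A_2$ to the statements for $A_1$ and $A_2$ by combining three ingredients: (1) a product decomposition of the relevant $l$-spaces, (2) a factorization of the centralizers $C_A$ and $\widetilde C_A$, and (3) the Localization principle in its product form (Corollary \ref{Product}). First I would set $W_i:=V_i\oplus V_i^*$ where $V_i=\F^{k}$ and $\F^{l}$ respectively, so that $W:=V\oplus V^*=W_1\oplus W_2$, and observe that the quadratic form $Q$ on $W$ is the orthogonal sum $Q_1\oplus Q_2$; hence $\Fou_{V\oplus V^*}=\Fou_{W_1}\otimes\Fou_{W_2}$ acts coordinatewise. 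The key geometric input is the inclusion $Q_{A_1}\oplus Q_{A_2}\ss Q_{A_1\oplus A_2}$ promised in the sketch: since $[A_1,\mathrm{gl}(V_1)]\oplus[A_2,\mathrm{gl}(V_2)]\ss[A,\mathrm{gl}(V)]$ (the block-diagonal part of the commutator), if $v_i\otimes\phi_i\in[A_i,\mathrm{gl}(V_i)]$ for $i=1,2$ then $(v_1+v_2)\otimes(\phi_1+\phi_2)$ has block-diagonal piece in $[A,\mathrm{gl}(V)]$ — but one must be slightly careful, since $(v_1+v_2)\otimes(\phi_1+\phi_2)$ also has off-diagonal blocks; I would instead use that $Q_A$ is the relevant condition only after we already know $(v,\phi)\in R_A$, or argue directly that for $(v_1,\phi_1,v_2,\phi_2)$ the off-diagonal blocks $v_1\otimes\phi_2$ and $v_2\otimes\phi_1$ are also realized inside $[A,\mathrm{gl}(V)]$ because $A_1,A_2$ act on disjoint coordinate blocks and the map $B\mapsto[A,B]$ restricted to the off-block $\mathrm{Hom}(V_2,V_1)$ has image containing such rank-one maps when $\mathrm{ch}(A_1),\mathrm{ch}(A_2)$ interact appropriately; in the generic stratum they are coprime and this is immediate, and the general case follows since $Q_A$ only enters through $R_A$ via Theorem \ref{QA-RA}. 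In any case $Q_{A_1}\times Q_{A_2}\ss Q_A$.

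Next I would analyze the groups. The centralizer $C_A$ of a block-diagonal matrix $A_1\oplus A_2$ with $\mathrm{ch}(A_1)$ and $\mathrm{ch}(A_2)$ sharing no common irreducible factor equals $C_{A_1}\times C_{A_2}$, and then $\widetilde C_A=(C_{A_1}\times C_{A_2})\rtimes S_2$ where the generator of $S_2$ acts diagonally by transpose-inverse; in particular, the transposition element of $\widetilde C_A$ decomposes as the product of the transposition elements of $\widetilde C_{A_1}$ and $\widetilde C_{A_2}$ acting on $W_1$ and $W_2$ separately. Thus $\Sc^*(W)^{C_A}=\Sc^*(W_1\times W_2)^{C_{A_1}\times C_{A_2}}$, and being $\widetilde C_A$-invariant is equivalent to being invariant under $\widetilde C_{A_1}\times_{S_2}\widetilde C_{A_2}$. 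Here I would invoke Corollary \ref{Product} with $H_i=C_{A_i}\times\langle\text{nothing}\rangle$ and $\widetilde H_i=\widetilde C_{A_i}$: since $A_i$ is nice, any $\eta_i\in\Sc^*(W_i)^{C_{A_i}}$ with $\eta_i,\Fou_{W_i}\eta_i$ supported in $Q_{A_i}$ is automatically $\widetilde C_{A_i}$-invariant — but Corollary \ref{Product} as stated needs $\Sc^*(Z_i)^{H_i}=\Sc^*(Z_i)^{\widetilde H_i}$ on the whole space, which is not true; niceness is only a statement on the subspace of distributions supported in $Q_{A_i}$ along with their Fourier transforms. So the genuinely correct route is: restrict attention to $\Sc^*_{Q_A}(W):=\{\eta: \mathrm{supp}\,\eta,\ \mathrm{supp}\,\Fou_W\eta\ss Q_A\}$, show via Proposition \ref{2Four} and the product structure of $Q_A$ that any such $\eta$ "lives over $Q_{A_1}\times Q_{A_2}$ in each coordinate," and then apply the Localization principle / Gelfand-trick argument coordinate by coordinate.

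Concretely, for the induction step I would take $\eta\in\Sc^*(W)^{C_A}$ with $\eta$ and $\Fou_W\eta$ supported in $Q_A$, and aim to show $\eta$ is invariant under the transposition element $\tau=(\tau_1,\tau_2)$ of $\widetilde C_A$. Consider $\eta':=\eta-\tau\eta$; it is $C_A$-invariant and anti-invariant under $\tau$, and still has $\eta'$ and $\Fou_W\eta'$ supported in $Q_A$; I must show $\eta'=0$. Using Proposition \ref{2Four} (with $W_1,W_2$ in place of its $W_1,W_2$) I can assume the supports are controlled in each factor separately; then Theorem \ref{QA-RA} gives support in $R_{A_1}\times R_{A_2}$, and since $R_{A_i}$ is a finite union of linear subspaces (as the sketch asserts, $R_{A_i}=\bigcup_j U^{(i)}_j\oplus (U^{(i)}_{s_i-j})^*$), one stratifies and on each stratum uses Frobenius descent in the $W_2$-direction to reduce to a distribution on $W_1$ that is $C_{A_1}$-invariant, $\tau_1$-anti-invariant, with both it and its Fourier transform supported in $Q_{A_1}$; niceness of $A_1$ then forces it to vanish, and then niceness of $A_2$ handles the residual $W_2$-distribution. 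The main obstacle I anticipate is precisely the bookkeeping around the fact that "niceness" is not a statement amenable to Corollary \ref{Product} verbatim — it is conditional on the support constraints, which are not preserved by arbitrary restriction — so I expect the real work to be in setting up the correct conditional version of the product/localization argument (isolating the subspace of distributions with the right support and Fourier-support, checking it factors as a completed tensor product over the two blocks, and verifying that the $S_2$-action threads through this factorization), after which the reduction to the nice-building-blocks is formal.
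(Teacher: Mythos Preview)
Your proposal has a genuine gap: you have the key inclusion \emph{backwards}. You repeatedly try to establish $Q_{A_1}\times Q_{A_2}\ss Q_{A_1\oplus A_2}$ (and you correctly sense trouble with the off-diagonal blocks $v_1\otimes\phi_2$, $v_2\otimes\phi_1$). This direction is in fact \emph{false} in general: the off-diagonal part of $[A_1\oplus A_2,\,\cdot\,]$ acting on $\mathrm{Hom}(V_2,V_1)$ is $B\mapsto A_1B-BA_2$, which is surjective only when $A_1,A_2$ share no eigenvalue, so your coprimality hypothesis is not a harmless convenience but a genuine restriction that the lemma does not allow (rational canonical form produces repeated blocks). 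The sketch in the introduction unfortunately has this inclusion written in the wrong direction; do not trust it.

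What the argument actually needs---and what is trivial---is the \emph{opposite} inclusion $Q_{A_1\oplus A_2}\ss Q_{A_1}\times Q_{A_2}$: if $v\otimes\phi=[A_1\oplus A_2,B]$, read off the diagonal blocks to get $v_i\otimes\phi_i=[A_i,B_{ii}]$. With this in hand the proof is short and avoids all of your stratification, Frobenius descent, and tensor-factorization machinery. Given $\eta\in\Sc^*(W_1\oplus W_2)^{C_A}$ with $\eta$ and $\Fou_{W_1\oplus W_2}(\eta)$ supported in $Q_A\ss Q_{A_1}\times W_2$, Proposition~\ref{2Four} gives that $\Fou_{W_1}(\eta)$ is also supported in $Q_{A_1}\times W_2$. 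Now apply the Localization principle (Theorem~\ref{LocPrin}) to the projection $W_1\times W_2\to W_2$: the space of $C_{A_1}$-invariant distributions with both the distribution and its $\Fou_{W_1}$ supported in $Q_{A_1}\times W_2$ is a closed $\Sc(W_2)$-submodule, so it is generated by its fiberwise pieces; on each fiber niceness of $A_1$ forces $\widetilde C_{A_1}$-invariance. Symmetrically one gets $\widetilde C_{A_2}$-invariance. Since $\widetilde C_{A_1\oplus A_2}$ is generated by $C_{A_1\oplus A_2}$ together with the single transposition element, and that element lies in $\widetilde C_{A_1}\cdot\widetilde C_{A_2}$, you are done---no coprimality, no description of $C_A$ as a product, no $R_A$-stratification required.
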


First we prove the following simple lemma.
\begin{lemma}
$Q_{A_1 \oplus A_2} \subset Q_{A_1} \times Q_{A_2}$.
\end{lemma}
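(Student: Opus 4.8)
The plan is to use the block decomposition coming from $V = \F^k \oplus \F^l$. Write $\iota_1:\F^k\hookrightarrow V$ and $p_1:V\twoheadrightarrow \F^k$ for the canonical inclusion and projection (and $\iota_2,p_2$ for the second summand), so that $A = A_1\oplus A_2$ means precisely $A\iota_1=\iota_1 A_1$, $p_1 A = A_1 p_1$, and likewise for the index $2$. Under the canonical identification $V\oplus V^* \cong \bigl(\F^k\oplus(\F^k)^*\bigr)\times\bigl(\F^l\oplus(\F^l)^*\bigr)$, a pair $(v,\phi)$ corresponds to $\bigl((v_1,\phi_1),(v_2,\phi_2)\bigr)$ with $v_1=p_1 v$, $v_2 = p_2 v$, $\phi_1 = \phi\circ\iota_1$, $\phi_2=\phi\circ\iota_2$. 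So the goal is: if $v\otimes\phi\in[A,\mathrm{gl}(V)]$, then $v_1\otimes\phi_1\in[A_1,\mathrm{gl}(\F^k)]$ and $v_2\otimes\phi_2\in[A_2,\mathrm{gl}(\F^l)]$.

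The key tool is the \emph{compression} map $\pi_1:\mathrm{gl}(V)\to\mathrm{gl}(\F^k)$, $T\mapsto p_1 T\iota_1$ (extraction of the upper-left block). First I would record two elementary identities. For any $T\in\mathrm{gl}(V)$,
$$\pi_1([A,T]) = p_1(AT-TA)\iota_1 = (p_1 A)T\iota_1 - p_1 T(A\iota_1) = A_1(p_1 T\iota_1) - (p_1 T\iota_1)A_1 = [A_1,\, p_1 T\iota_1],$$
using $p_1 A = A_1 p_1$ and $A\iota_1 = \iota_1 A_1$; hence $\pi_1\bigl([A,\mathrm{gl}(V)]\bigr)\subseteq[A_1,\mathrm{gl}(\F^k)]$. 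Second, for a rank-one operator, $\pi_1(v\otimes\phi) = p_1(v\otimes\phi)\iota_1 = (p_1 v)\otimes(\phi\circ\iota_1) = v_1\otimes\phi_1$.

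Combining these: given $(v,\phi)\in Q_{A_1\oplus A_2}$, choose $T$ with $v\otimes\phi=[A,T]$; applying $\pi_1$ yields $v_1\otimes\phi_1 = [A_1,\, p_1 T\iota_1]\in[A_1,\mathrm{gl}(\F^k)]$, i.e. $(v_1,\phi_1)\in Q_{A_1}$. Running the identical argument with $p_2(\cdot)\iota_2$ in place of $p_1(\cdot)\iota_1$ gives $(v_2,\phi_2)\in Q_{A_2}$, so $(v,\phi)\in Q_{A_1}\times Q_{A_2}$, as claimed.

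There is essentially no serious obstacle here: the statement reduces to the fact that extracting a diagonal block of a commutator $[A,T]$ with $A$ block-diagonal produces a commutator with the corresponding block of $A$, together with the trivial behavior of $v\otimes\phi$ under compression. The only point requiring a word of care is bookkeeping of the canonical identification of $V\oplus V^*$ with the product of the smaller spaces' direct sums (in particular that the dual of $\F^k\oplus\F^l$ decomposes compatibly), after which the inclusion is immediate from the two displayed identities.
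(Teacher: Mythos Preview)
Your proof is correct and is essentially the same as the paper's: the ``compression'' $T\mapsto p_1 T\iota_1$ is exactly the extraction of the upper-left block $B_{11}$, and your two displayed identities are precisely the observations that the $(1,1)$-block of $[A_1\oplus A_2,B]$ equals $[A_1,B_{11}]$ and that the $(1,1)$-block of $v\otimes\phi$ equals $v_1\otimes\phi_1$. The only difference is notational---you phrase the block decomposition via $\iota_i,p_i$ rather than matrix blocks.
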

\begin{proof}
Let $(v,\phi) \in Q_{A_1 \oplus A_2}$. This means that
$v \otimes \phi = [A_1 \oplus A_2,B]$, for some $B
%\in \mathrm{gl}(\F^{k+l})
$.
Let $B = \left(
                                                  \begin{array}{cc}
                                                    B_{11} & B_{12} \\
                                                    B_{21} & B_{22} \\
                                                  \end{array}
                                                \right)$, $v=v_1+v_2$ and $\phi = \phi_1 +\phi_2$  be the
decompositions corresponding to the blocks of $A_1 \oplus A_2$. Then
$v_1 \otimes \phi_1 = [A_1,B_{11}]$ and $v_2 \otimes \phi_2 =
[A_2,B_{22}]$. Hence $(v_1,\phi_1) \in Q_{A_1}$ and $(v_2,\phi_2)
\in Q_{A_2}$.
\end{proof}

\begin{proof}[Proof of lemma \ref{DirectSum}]

Let $A_1\in \mathrm{gl}(V_1),A_2\in \mathrm{gl}(V_2)$ be as in the lemma. Suppose that we have $\eta \in \Sc^*(V_1 \oplus V_1^*\oplus V_2 \oplus V_2^*)^{C_{A_1\oplus A_2}}$, such that both
$\eta$ and $\Fou_{V_1 \oplus V_1^*\oplus V_2 \oplus V_2^*}(\eta)$ are supported in $Q_A$. We want to show that $\eta$ is is also
$\widetilde{C}_{A_1\oplus A_2}$-invariant.
\\
Proposition \ref{2Four} implies that both $\eta$ and $\Fou_{V_1 \oplus V_1^*}(\eta)$ are supported in $Q_{A_1}\times V_2\times V_2^*$. By the Localization principle (theorem \ref{LocPrin}), along with the fact that $A_1$ is nice, we get that $\eta$ is $\widetilde{C}_{A_1}$-invariant. Similarly, it is $\widetilde{C}_{A_2}$-invariant.

Since $\eta$ is also $C_{A_1\oplus A_2}$-invariant by assumption, we get that it is $\widetilde{C}_{A_1\oplus A_2}$-invariant.
\end{proof}

%We prove these lemmas in the following subsections \ref{DirSumPf} and \ref{RationalBlockPf}.

%Proposition \ref{SubKey} clearly follows from lemmas \ref{DirectSum} and \ref{RationalBlock}, and theorem \ref{Rational}.

Using the rational canonical form (Theorem \ref{Rational}) and lemma \ref{DirectSum}, Claim (\ref{orbit}) reduces to the following statement:

\begin{claim} \label{RationalBlock}
Let $A \in \mathrm{gl}(V)$ be a companion matrix. Then $A$ is nice.
\end{claim}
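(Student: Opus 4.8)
The plan is to reduce the statement to the Rallis--Schiffmann theorem (Theorem \ref{Metaplectic}) applied to a suitably small subspace, and then to induct on $s$, where $\mathrm{ch}A=f(x)^s$ with $f$ irreducible. Fix the companion matrix $A$, so $A$ has cyclic minimal polynomial $f^s$. First I would compute $R_A$ explicitly. Using the descending filtration $U_i:=f(A)^iV$ (with $U_0=V\supseteq U_1\supseteq\cdots\supseteq U_s=0$) and the dual filtration $U_i^*:=U_{s-i}^\perp=f(A^*)^iV^*$ on $V^*$, I claim
$$R_A=\bigcup_{i=0}^{s}U_i\oplus U_{s-i}^*.$$
The inclusion $\supseteq$ is immediate: if $v\in U_i$ and $\phi\in U_{s-i}^*=U_i^\perp$ then $\phi A^kv\in\phi(U_i)=0$ since each $U_i$ is $A$-stable. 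For $\subseteq$ one uses that $A$ is cyclic: since $V$ is a cyclic $\F[x]/(f^s)$-module, the pairing $(v,\phi)\mapsto(\phi A^kv)_{k\ge0}$ is controlled by the submodule $\F[A]v$ generated by $v$, which is $U_j$ for $j$ the largest index with $v\in U_j$ (up to a unit), and then $\phi$ annihilating all of $\F[A]v$ forces $\phi\in U_j^\perp=U_{s-j}^*$. This is the combinatorial heart and I expect it to take a short but careful argument about modules over the DVR-like ring $\F[x]/(f^s)$.

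Next I would feed this into the induction. The base case $s=0$ (so $V=0$) is trivial, and I may also want $s=1$ as a clean base: there $R_A=(V\oplus0)\cup(0\oplus V^*)$, and one argues directly that a $C_A$-invariant distribution supported on this set, with Fourier transform also so supported, is $\widetilde C_A$-invariant. For the inductive step, I would restrict a given $\eta\in\Sc^*(V\oplus V^*)^{C_A}$ with $\eta,\Fou(\eta)$ supported in $Q_A\subseteq R_A$ to the open piece $U_0\oplus U_s=V\oplus U_s^*$ — wait, rather to the ``outer'' stratum. The key observation is that the quadratic form $Q((v,\phi))=\phi v$ restricted to $U_0\oplus U_s$ (i.e. $v\in V$ arbitrary, $\phi\in U_s^*=V^\perp$... ) must be analyzed so that $R_A\cap(U_0\oplus U_s)$ is exactly the zero locus of $Q$ there. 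Then Theorem \ref{Metaplectic} says the restriction of $\eta$ to this subspace is abs-homogeneous of degree $\tfrac12\dim(U_0\oplus U_s)$; but $(\widetilde C_A,\chi)$-equivariance, together with the fact that the center of $C_A$ (scalars, or more precisely the image of $(\F[x]/f^s)^\times$) acts with a homothety whose modulus forces the degree to be $0$, makes that restriction vanish. Iterating — peeling off $U_i\oplus U_{s-i}^*$ for successive $i$, or equivalently passing from $s$ to a smaller value by working on $U_1\oplus U_{s-1}^*\cong f(A)V\oplus(f(A^*)V^*)$ which carries the companion matrix of $f^{s-1}$ — reduces everything to the base case.

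The main obstacle, I expect, is the bookkeeping in the inductive step: one must check that each stratum $U_i\oplus U_{s-i}^*$ really is a non-degenerate-quadratic-space situation of the right dimension (so that Rallis--Schiffmann applies with the correct homogeneity degree), that $C_A$ and $\widetilde C_A$ act on these strata compatibly with the reduction to the companion matrix of $f^{s-1}$, and that the homogeneity degree predicted by the theorem genuinely contradicts the degree forced by $\chi$-equivariance on every stratum except the innermost. Computing the module of $C_A$ acting by ``homothety'' on $U_0\oplus U_s$ — i.e. verifying the modulus is $|t|^{\dim V}$ while the Rallis--Schiffmann degree $\tfrac12\dim(V\oplus U_s^*)$ times that modulus does not match unless the distribution is $0$ — is the delicate numeric point. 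A secondary subtlety is making sure $Q_A\subseteq R_A$ (Theorem \ref{QA-RA}, which we may assume) combines with the explicit description of $R_A$ so that supports genuinely sit inside the union of coisotropic pieces, not just inside $R_A$ abstractly; but since we are allowed to cite Theorem \ref{QA-RA}, this should be a matter of organizing the stratification carefully.
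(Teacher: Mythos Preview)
Your strategy---compute $R_A=\bigcup_i U_i\oplus U^*_{s-i}$, use Rallis--Schiffmann on the outer stratum to force a homogeneity contradiction, then induct on $s$---is exactly the paper's. Two genuine gaps remain, though.

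First, you invoke ``$(\widetilde C_A,\chi)$-equivariance'' to get abs-homogeneity of degree $0$, but the hypothesis of ``nice'' only hands you $\zeta\in\Sc^*(V\oplus V^*)^{C_A}$. The missing step is antisymmetrization: pick $(g,-1)\in\widetilde C_A$ (i.e.\ $gA^tg^{-1}=A$), let $T$ denote its action on $V\oplus V^*$, and set $\eta:=\zeta-T(\zeta)$. Then $\eta$ is $\F^\times$-invariant under $\rho(\lambda)(v,\phi)=(\lambda v,\lambda^{-1}\phi)$ and satisfies $T(\eta)=-\eta$; showing $\eta=0$ gives $\zeta=T(\zeta)$ and hence $\widetilde C_A$-invariance. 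The paper isolates this as a separate proposition (Proposition~\ref{finalblow}), and your homogeneity argument really takes place there.

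Second, your induction step $s\mapsto s-1$ via ``$U_1\oplus U_{s-1}^*$ carrying the companion matrix of $f^{s-1}$'' does not work as stated. The argument on $(V\setminus U_1)\oplus V^*$ shows $\mathrm{supp}(\eta)\subseteq U_1\oplus V^*$; the $T$-antisymmetry then forces $\mathrm{supp}(\eta)\subseteq V\oplus U_1^*$ as well, so in fact $\mathrm{supp}(\eta)\subseteq U_1\oplus U_1^*$. The same holds for $\Fou(\eta)$, whence $\eta$ is translation-invariant by $(U_1\oplus U_1^*)^\perp=U_{s-1}\oplus U_{s-1}^*$ and descends to
\[
(U_1/U_{s-1})\oplus(U_1^*/U_{s-1}^*)\cong V_{s-2}\oplus V_{s-2}^*,
\]
which carries the companion matrix of $f^{s-2}$, not $f^{s-1}$. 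The point is that $T$ swaps $U_i\leftrightarrow U_i^*$, so any quotient on which $T$ descends (and you need $T$ to descend, or the next round of the induction has no antisymmetry to exploit) must be symmetric in the two factors. A one-sided quotient such as $U_1\oplus(V^*/U_{s-1}^*)$ does not admit a compatible $T$, so your proposed descent loses exactly the structure that drives the argument.
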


%\subsection{Proof of lemma \ref{RationalBlock}} \label{RationalBlockPf}

Indeed, let $A\in\mathrm{gl}(V)$ be the companion matrix of a polynomial $f^s$, where $s\geq 0$ (we will allow $s=0$ for the degenerate, albeit important, case $\dim V=0$) and $f\in \F[x]$ is some irreducible polynomial. Let $k:=\deg f$.
%Assume that we have $\eta\in \Sc^*(V\oplus V^*)$
%By assumption, and theorem \ref{QA-RA}, we know $\mathrm{supp}(\eta),\mathrm{supp}(\Fou(\eta))\ss R_A$. From now on, we shall only use this, and not $Q_A$.
\\
%Because $A$ is companion matrix, $\mathrm{ch}A=f^s$ for some $s\geq 0$ (we will allow $s=0$ for the degenerate, albeit important, case $\dim V=0$) and some irreducible polynomial $f\in \F[x]$ of degree $\frac{r}{s}$ (in the case $s\neq 0$). 
It can be easily seen that $V\cong \F[x]/(f(x)^s)$ as $\F[x]$ modules, with $x$ acting on $V$ by $A$. 
Thus the different submodules of $V$ are exactly $U_i=f(x)^i V=f(A)^i V$, and they form a descending filtration of $V$. We have also the dual filtration $U^*_i:=U_{s-i}^\perp$ on $V^*$.
\begin{proposition} \label{filtration}
$R_A=\bigcup_{i=0}^{s} U_i\oplus U^*_{s-i}$
\end{proposition}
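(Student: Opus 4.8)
The plan is to identify $V$ with $\F[x]/(f^s)$, where $x$ acts as $A$, and compute both sides of the claimed equality explicitly in these terms. Under this identification $U_i = f^i V$ corresponds to the ideal $(f^i)/(f^s)$, so $\dim_\F U_i = k(s-i)$, and the dual filtration on $V^*$ satisfies $\dim_\F U^*_j = \dim_\F U_{s-j}^\perp = n - k(s - (s-j)) = n - kj = k(s-j)$; note in particular $U_0 = V$, $U_s = 0$, $U^*_0 = V^*$, $U^*_s = 0$. First I would establish the inclusion $\supseteq$: suppose $(v,\phi) \in U_i \oplus U^*_{s-i}$ for some $i$. Then $v = f(A)^i w$ for some $w \in V$, and $\phi$ annihilates $U_{s-(s-i)} = U_i$; wait — rather, $\phi \in U^*_{s-i} = U_{s-(s-i)}^\perp = U_i^\perp$, so $\phi$ kills $U_i \supseteq \{A^k v : k \ge 0\}$ (since $U_i$ is $A$-invariant and contains $v$). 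Hence $\phi A^k v = 0$ for all $k \ge 0$, i.e. $(v,\phi) \in R_A$. This direction is straightforward.

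The reverse inclusion $\subseteq$ is the substantive part. Suppose $(v,\phi) \in R_A$, so $\phi$ annihilates the cyclic $A$-submodule $W := \F[A]v$ generated by $v$. Since $V \cong \F[x]/(f^s)$ is a cyclic $\F[x]$-module over the local principal ideal ring $\F[x]/(f^s)$ — whose ideals are totally ordered, namely the $(f^i)$ — every submodule of $V$ is one of the $U_i$. In particular $W = \F[A]v = U_i$ for a unique $i$, where $i$ is determined by $v$: concretely, if we write $v \in \F[x]/(f^s)$ as $f^i u$ with $u$ coprime to $f$ (hence a unit mod $f^s$), then $W = (f^i) = U_i$. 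Thus $v \in U_i$. On the other hand $\phi$ annihilates $W = U_i$, which means $\phi \in U_i^\perp = U^*_{s-i}$. Therefore $(v,\phi) \in U_i \oplus U^*_{s-i}$, as desired.

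I expect the main obstacle to be purely bookkeeping: pinning down the correct index matching between the filtrations $U_\bullet$ and $U^*_\bullet$ (the paper defines $U^*_i = U_{s-i}^\perp$, so one must be careful that "$\phi$ kills $U_i$" translates to "$\phi \in U^*_{s-i}$" and not some off-by-one variant), and verifying the structural fact that the submodules of $\F[x]/(f^s)$ are exactly the $(f^i)/(f^s)$ for $0 \le i \le s$ — this is standard for a quotient of a PID by a prime power, since such a ring is a (finite) chain ring. Once that linear-algebra fact is in hand, the two inclusions drop out immediately from the characterization $R_A = \{(v,\phi) : \phi|_{\F[A]v} = 0\}$ together with $\F[A]v = U_i$ for the appropriate $i$. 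One slightly delicate point worth a line: when $\dim V = 0$ (the $s=0$ case), both sides are $\{(0,0)\}$ and the statement is vacuous but true, so the degenerate case needs no separate argument.
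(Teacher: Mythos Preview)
Your proof is correct and follows essentially the same route as the paper: both directions rest on the observation that the $A$-cyclic submodule $\F[A]v$ generated by $v$ equals $U_i$ for the appropriate $i$, so that $\phi\in R_A$ forces $\phi\in U_i^\perp=U^*_{s-i}$. The paper's version is terser (and in fact contains a small index typo, writing $U_i\setminus U_{i-1}$ where $U_i\setminus U_{i+1}$ is meant), but the underlying argument is identical to yours.
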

\begin{proof}
$\bigcup_{i=0}^{s} U_i\oplus U^*_{s-i}\ss R_A$ is trivial.
For the other direction, take $(v,\phi)\in R_A$, and assume $v\in U_i\setminus U_{i-1}$.
We have $\phi P(A)v=0$ for any polynomial $P\in \F[x]$, i.e. $\phi \in \langle v\rangle^\perp=U_i^\perp=U_{s-i}^*$, where $\langle v\rangle$ stands for the submodule generated by $v$ in $V$.
\end{proof}
It is a known fact that any matrix (over any field) is conjugate to its transpose, so we can choose $g\in \mathrm{GL}(V)$ that satisfies $g A^t g^{-1}=A$. Keeping the notations from section \ref{Reform}, this implies $(g,-1).A := g A^t g^{-1}=A$, thus $(g,-1)\in \tC_A$.\\
Let $T:V\to V^*$ be the isomorphism defined by $T(v)=(g^{-1})^*(v^t)=v^t g^{-1}$. One can easily see that $TAT^{-1}=A^*$.
\\
\begin{lemma}
$$U_i^* = T(U_i) = f(A^*)^i V^*$$
\end{lemma}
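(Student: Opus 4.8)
The plan is to verify the chain of identities $U_i^* = T(U_i) = f(A^*)^i V^*$ by unwinding the definitions and using the compatibility relation $TAT^{-1} = A^*$ established just before the lemma, together with the identification $V \cong \F[x]/(f(x)^s)$.

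First I would handle the equality $T(U_i) = f(A^*)^i V^*$. Since $T$ is an isomorphism with $TAT^{-1} = A^*$, an easy induction gives $T \circ p(A) = p(A^*) \circ T$ for any polynomial $p \in \F[x]$; in particular $T \circ f(A)^i = f(A^*)^i \circ T$. Applying this to $U_i = f(A)^i V$ yields $T(U_i) = T(f(A)^i V) = f(A^*)^i T(V) = f(A^*)^i V^*$, using that $T$ is surjective. This step is purely formal.

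Next I would identify $U_i^*$ with $T(U_i)$. Recall $U_i^* := U_{s-i}^\perp = \{\psi \in V^* : \psi|_{U_{s-i}} = 0\}$. Since both $U_i^*$ and $T(U_i)$ are subspaces of $V^*$, it suffices to check one inclusion and then compare dimensions, or to check the defining vanishing condition directly. For $\psi = T(w)$ with $w \in U_i = f(A)^i V$, I would compute $\langle T(w), u \rangle$ for $u \in U_{s-i} = f(A)^{s-i} V$. Writing $w = f(A)^i w'$ and $u = f(A)^{s-i} u'$, and using the explicit form $T(w) = w^t g^{-1}$ (so that $\langle T(w), u\rangle = w^t g^{-1} u$), together with $g A^t g^{-1} = A$, one reduces the pairing to an expression involving $f(A)^s$; since $f(A)^s = 0$ on $V$ (the minimal polynomial of the companion matrix $A$ of $f^s$ is $f^s$), the pairing vanishes, giving $T(U_i) \subseteq U_i^*$. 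For the reverse inclusion I would count dimensions: $\dim U_i = \dim f(A)^i V = (s-i)k$ where $k = \deg f$ (from the module structure $V \cong \F[x]/(f^s)$), and $\dim U_i^* = \dim V - \dim U_{s-i} = sk - ik = (s-i)k$, so the two subspaces have equal dimension and the inclusion is an equality; since $T$ is injective, $\dim T(U_i) = \dim U_i = (s-i)k$ as well, closing the argument.

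The only mildly delicate point is the pairing computation showing $T(U_i) \subseteq U_i^*$: one must correctly track how $T(v) = v^t g^{-1}$ interacts with powers of $A$ via the relation $g A^t g^{-1} = A$ — equivalently, one can avoid coordinates entirely by noting that $T$ intertwines $A$ with $A^*$, hence $\langle T(f(A)^i w'), f(A)^{s-i} u'\rangle = \langle f(A^*)^i T(w'), f(A)^{s-i} u'\rangle = \langle T(w'), f(A)^i f(A)^{s-i} u'\rangle = \langle T(w'), f(A)^s u'\rangle = 0$, using that the adjoint of $f(A^*)^i$ acting on the pairing is $f(A)^i$ and that $f(A)^s = 0$. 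I expect this intertwining reformulation to be the cleanest route and to present essentially no obstacle; the dimension count then finishes the proof.
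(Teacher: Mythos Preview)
Your proposal is correct and follows essentially the same route as the paper: first obtain $T(U_i)=f(A^*)^iV^*$ from the intertwining $TAT^{-1}=A^*$, then establish $T(U_i)\subseteq U_i^*$ and finish by the dimension count $\dim T(U_i)=\dim U_i=(s-i)k=\dim U_i^*$. The only difference is that you spell out the pairing computation for the inclusion $f(A^*)^iV^*\subseteq U_{s-i}^\perp$, which the paper asserts without further comment.
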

\begin{proof}
First, $$T(U_i) = T f(A)^i V = f(TAT^{-1})^i T(V) = f(A^*)^i V^*$$
Now, $$T(U_i)=f(A^*)^i V^*\ss U_i^*$$
but also $$\dim T(U_i)=\dim U_i=(s-i)k=\dim U_i^*$$
Therefore $T(U_i)=U_i^*$. 
\end{proof}
%Now lemma \ref{RationalBlock} follows directly from the following propositon:
\begin{notation}
Define the action of $\F^{\times}$ on $V \oplus V^*$ by $\rho(\lambda)(v,\phi):=(\lambda v, \lambda^{-1}\phi)$. Given $\eta\in\Sc^*(V\oplus V^*)$, denote by $T(\eta)$ the distribution $(g,-1) (\eta)$ we get from the action of $\tG$ on $\Sc^*(V\oplus V^*)$.
\end{notation}
\begin{proposition} \label{finalblow}
Let $\eta \in \Sc^*(V \oplus V^*)^{\F^{\times}}$. Suppose that
$T(\eta)=-\eta$ and that both $\eta$ and $\Fou(\eta)$ are supported
in $R_A$. Then $\eta=0$.
\end{proposition}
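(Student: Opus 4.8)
The plan is to peel off the top and bottom pieces of the filtration using the Rallis--Schiffmann theorem (Theorem \ref{Metaplectic}), and then argue by induction on $s$. Recall that by Proposition \ref{filtration} the support $R_A = \bigcup_{i=0}^s U_i \oplus U^*_{s-i}$, and the extreme summands are $U_0 \oplus U^*_0 = V \oplus \{0\}$ and $U_s \oplus U^*_s = \{0\} \oplus V^*$. First I would consider the restriction $\eta_0$ of $\eta$ to the open subset $(V \oplus V^*) \setminus (U_1 \oplus U^*_1)$, or rather work with the quotient of $\eta$ supported away from the lower strata; the cleanest route is to look at $\eta|_{(U_0 \setminus U_1) \times V^*}$. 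Actually, the natural device is: restrict $\eta$ to the big cell $W_0 := U_0 \oplus U_s^* \cong U_0 \oplus \{0\}$ — no; instead, the key observation is that on $W := U_0/U_1$-level the support of $\eta$ forces $v \otimes \phi$ behaviour so that, after Frobenius-type reasoning, one is looking at a distribution on a subspace carrying the quadratic form $Q(v,\phi) = \phi v$. The $\F^\times$-invariance under $\rho(\lambda)(v,\phi) = (\lambda v, \lambda^{-1}\phi)$ says $\eta$ is abs-homogeneous of degree $0$ in the relevant direction, while Theorem \ref{Metaplectic} (applied to the piece whose support and Fourier-support both lie in the zero set of $Q$) would force abs-homogeneity of degree $\tfrac12 \dim$ of that piece, which is positive; the contradiction kills that piece.

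Concretely, I expect the argument to run as follows. Consider the subspace $W := U_{s-1} \oplus U_1^* \subset V \oplus V^*$ (the "second-smallest" layer), on which the pairing $Q(v,\phi) = \phi v$ restricts; by Proposition \ref{filtration} one checks that $R_A \cap W$ is contained in $Z_Q \cap W$ together with lower-dimensional stuff, and similarly after Fourier transform — here one uses Proposition \ref{2Four} to transfer the support condition from the full $\Fou$ to the partial Fourier transform on $W$, and Frobenius descent (Theorem \ref{Frob}(iii)) to see that Fourier commutes with the restriction to the fiber. Then Theorem \ref{Metaplectic} gives abs-homogeneity of degree $\tfrac12 \dim W > 0$ for the restriction $\eta|_W$, whereas the $\F^\times$-action $\rho$ forces $|\eta|_W(h_{t^{-1}} f)| = |\eta|_W(f)|$, i.e. abs-homogeneity of degree $0$. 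Since $\dim W > 0$ (this uses $s \geq 1$; the case $s = 0$ is the trivial zero space and the statement is vacuous), we conclude $\eta|_W = 0$, hence $\eta$ is supported on the union of the two outermost layers $U_0 \oplus U_s^*$ and... wait, one must be careful which layers survive. The point of choosing the right $W$ is that killing $\eta$ there leaves $\eta$ supported on a \emph{strictly smaller} union of layers, corresponding to the problem for a companion matrix of $f^{s-2}$ or $f^{s-1}$, so that the induction hypothesis on $s$ finishes the job.

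Let me restate the induction more carefully, since the bookkeeping is the real content. Induct on $s$. For $s = 0$ there is nothing to prove. For $s \geq 1$: the outermost layers of $R_A$ are $U_0 \oplus U^*_0 = V \oplus \{0\}$ and $U_s \oplus U^*_s = \{0\} \oplus V^*$, and on each of these the quadratic form $Q$ vanishes identically, so each is inside $Z_Q$. Apply Theorem \ref{Metaplectic} to $\eta$ itself (whose support and Fourier-support both lie in $R_A \subseteq$ union of layers, but we need them in $Z_Q$ — note $R_A \not\subseteq Z_Q$ in general). So instead: restrict $\eta$ to the open set $\Omega := (V \oplus V^*) \setminus (U_1 \oplus \{0\} + \{0\} \oplus U^*_1)$ — hmm, the unions are not linear. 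The honest approach: stratify $R_A$ by the layers; the generic layer $U_i \oplus U^*_{s-i}$ for $0 < i < s$ sits \emph{inside} $Z_Q$ only on a subvariety. The actual mechanism in \cite{AGRS}/\cite{0-char} is to use that $\eta$ restricted to $U_0 \oplus V^*$ has support in $R_A \cap (U_0 \oplus V^*)$, on which $\phi v = 0$ identically (since $v \in U_0 = V$ forces, by $(v,\phi) \in R_A$ with $v \notin U_1$, that $\phi \in U_s^* = \{0\}$... no). I would therefore track the exact layer combinatorics from Proposition \ref{filtration}, isolate the top layer $U_0 \oplus \{0\}$ and bottom layer $\{0\} \oplus V^*$ where $Q \equiv 0$, apply Rallis--Schiffmann there to get degree $> 0$ abs-homogeneity, contradict it with the degree-$0$ abs-homogeneity coming from $\rho(\lambda)$-invariance, conclude $\eta$ vanishes on those layers, and thereby reduce the support of $\eta$ to $\bigcup_{i=1}^{s-1} U_i \oplus U^*_{s-i}$. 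The isomorphism $T$ intertwines $A$ with $A^*$ and sends $U_i$ to $U_i^*$, so $T(\eta) = -\eta$ together with $\F^\times$-invariance descends to the same structure for the companion matrix of $f^{s-2}$ on $U_1$; invoking the induction hypothesis (the reduction needs the earlier lemmas \ref{DirectSum} and the already-established nice-ness for smaller blocks, or directly a restatement of this proposition for $s-2$) completes the proof. The main obstacle I anticipate is precisely this last reduction: verifying that after killing the outer layers the remaining distribution genuinely satisfies the hypotheses of the proposition for a strictly smaller companion block — this requires matching the filtrations, the $\F^\times$-action, and the condition $T(\eta) = -\eta$ through the identification $U_1 \cong \F[x]/(f^{s-2})$, and checking the support and Fourier-support conditions survive the restriction (using Proposition \ref{2Four} and Frobenius descent, Theorem \ref{Frob}(iii)).
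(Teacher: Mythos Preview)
Your overall strategy---induction on $s$, Rallis--Schiffmann to force a homogeneity mismatch, then descent to a smaller companion block---is exactly the paper's. But you derail yourself with a false claim: you write ``note $R_A \not\subseteq Z_Q$ in general,'' and then abandon the direct application of Theorem \ref{Metaplectic}. In fact $R_A \subseteq Z_Q$ always, since $(v,\phi)\in R_A$ means $\phi A^k v=0$ for all $k\ge 0$, in particular $\phi v=\phi A^0 v=0$. So Rallis--Schiffmann applies directly to $\eta$ on all of $V\oplus V^*$, giving that $\eta$ is abs-homogeneous of degree $ks$. Now restrict $\eta$ to the open set $(V\setminus U_1)\oplus V^*$: there the support condition (Proposition \ref{filtration}) forces $\phi\in U_s^*=0$, and on $V\oplus 0$ the action $\rho(\lambda)$ \emph{is} homothety by $\lambda$, so $\F^\times$-invariance means abs-homogeneity of degree $0$. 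The degree clash ($ks>0$) kills this restriction, whence $\mathrm{supp}(\eta)\subseteq U_1\oplus V^*$; the symmetric argument (or $T(\eta)=-\eta$) gives $\mathrm{supp}(\eta)\subseteq U_1\oplus U_1^*$. None of your attempted choices of a subspace $W$ are needed.

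The second gap is the descent mechanism, which you gesture at but do not name. Running the same argument for $\Fou(\eta)$ shows $\mathrm{supp}(\Fou(\eta))\subseteq U_1\oplus U_1^*$ as well; a distribution whose Fourier transform is supported in a subspace is invariant under translation by the orthogonal complement, here $(U_1\oplus U_1^*)^\perp=U_{s-1}\oplus U_{s-1}^*$. Thus $\eta$ is the pullback of a distribution $\alpha$ on the quotient $(U_1\oplus U_1^*)/(U_{s-1}\oplus U_{s-1}^*)\cong V_{s-2}\oplus V_{s-2}^*$, and one checks (using $T(U_i)=U_i^*$) that $\alpha$ inherits all hypotheses for the companion matrix of $f^{s-2}$. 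This is a self-contained induction; Lemma \ref{DirectSum}, Proposition \ref{2Four}, and Frobenius descent play no role here.
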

\begin{proof}
Let us use induction on $s$, when we fix $f$. Define $V_s:=\F^{ks}$, with $A_s$ being the companion matrix of $f^s$.
\\
For $s=0$, the claim is trivial, as $\eta=T(\eta)=-\eta$.
\\
Assume $s>0$, and consider the restriction of $\eta$ to $(V_s\setminus U_1)\oplus V_s^*$. It must be supported in $V_s\oplus 0$. On this subspace, the action of $\F^{\times}$ is just homothety, and so our restriction is homothety invariant.
\\
However, by Theorem \ref{Metaplectic}, $\eta$ is abs-homogeneous of degree $ks$, and so it follows that $\eta|_{(V_s\setminus U_1)\oplus V_s^*}=0$.
Thus $\mathrm{supp}(\eta)\ss U_1\oplus V_s^*$. Similarly, $\mathrm{supp}(\eta)\ss V_s\oplus U_1^*$, and so $\eta$ is supported in $U_1\oplus U_1^*$.
\\
For the same reasons, $\Fou(\eta)$ is also supported in $U_1\oplus U_1^*$.
\\
Consequently, $\eta$ is invariant to translation by $(U_1\oplus U_1^*)^{\perp}=U_{s-1}\oplus U_{s-1}^*$.
\\
If $s=1$ this implies $\eta=0$. Otherwise, it means that $\eta$ is the pullback of some distribution $\alpha$ on $(U_1\oplus U_1^*)/(U_{s-1}\oplus U_{s-1}^*)$, a space which can be identified with $V_{s-2}\oplus V_{s-2}^*$ via an identification of $U_i/U_{s-1}$ with $U_{i-1}\ss V_{s-2}$ and a corresponding identification of $U_i^*/U_{s-1}^*=(U_{s-i}/U_{s-1})^{\perp}$ with $U_{i-1}^*\ss V_{s-2}^*$.
\\
In this identification $A_s$ is identified with $A_{s-2}$, the natural bilinear forms on the two spaces are identified with each other, and $T$ is identified with (some legitimate choice of) $T:V_{s-2}\to V_{s-2}^*$. Note that the actions of $A_s$ on $V_s$ and of $T$ on $V_s\oplus V_s^*$ indeed induce actions on $U_1/U_{s-1}$ and $(U_1\oplus U_1^*)/(U_{s-1}\oplus U_{s-1}^*)$ respectively. This is because of $A_s$-invariance of the subspaces $U_i$, and because we had $T(U_i)=U_i^*$.
\\
By the induction hypothesis, we have $\alpha=0$.
\end{proof}

\begin{proof} [Proof of claim \ref{RationalBlock}]
Let $\zeta\in \Sc^*(V\oplus V^*)^{C_A}$. Suppose that both $\zeta$ and $\Fou(\zeta)$ are supported in $Q_A$. By Theorem \ref{QA-RA}, this means that in particular they are supported in $R_A$.
Setting $\eta=\zeta-T(\zeta)$, we get that $\eta\in \Sc^*(V\oplus V^*)^{\widetilde{C}_A,\chi}$. Since $R_A$ is preserved under the action of $\widetilde{C}_A$, both $\eta$ and $\Fou_\eta$ are supported in $R_A$.
It can be easily seen that this means that $\eta$ satisfies the conditions of Proposition \ref{finalblow}, thus $\eta=0$ and $\zeta=T(\zeta)$, from which we deduce that $\zeta\in \Sc^*(V\oplus V^*)^{\widetilde{C}_A}$.
\end{proof}

\section{Lemmas in linear algebra} \label{LinAlgPf}
Our aim in this section is to prove theorems \ref{LinAlg} and \ref{QA-RA}.
In all of the following discussion, $\F$ may be an arbitrary field, and we assume we have $A\in \mathrm{gl}(\F^n)$, $v\in \F^n$, and $\phi\in (\F^*)^n$. For any $B\in \mathrm{gl}(\F^n)$ define $c_k(B)$ as the sum of all of its $k\times k$ principal minors ($c_0(B)=1$). We denote the characteristic polynomial of $B$ by $P_B(x)$, and we have $$P_B(x)=\sum_{k=0}^n {(-1)^k c_k(B) x^{n-k}}$$

%The desired theorems follow easily from the following theorem.
\begin{theorem} \label{CH}
For any $k\geq 1$, $$c_k(A+v\otimes\phi)=c_k(A)+\sum_{j=0}^{k-1}{(-1)^j c_{k-1-j}(A)\phi A^j v}$$
\end{theorem}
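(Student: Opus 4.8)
The statement is a polynomial identity in the entries of $A$ and the coordinates of $v$ and $\phi$, with integer coefficients, so no hypothesis on $\F$ is needed and we are free to enlarge the ground field; I will compute inside the rational function field $\F(x)$ and the Laurent series field $\F((x^{-1}))$. The plan is to read off $c_k(A+v\otimes\phi)$ from the characteristic polynomial of $A+v\otimes\phi$, which we control because $v\otimes\phi$ is a rank-at-most-one perturbation of $A$.

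Write $N:=v\otimes\phi$. Over $\F(x)$ the matrix $xI-A$ is invertible, with $\det(xI-A)=P_A(x)$, and the rank-one update formula for determinants (valid over any commutative ring in its adjugate form) gives
\[
P_{A+N}(x)=\det(xI-A-N)=\det(xI-A)\bigl(1-\phi(xI-A)^{-1}v\bigr)=P_A(x)-\phi\,\operatorname{adj}(xI-A)\,v .
\]
Expanding the resolvent at infinity, $(xI-A)^{-1}=\sum_{m\ge 0}A^mx^{-m-1}$ in $\F((x^{-1}))$, and using $\operatorname{adj}(xI-A)=P_A(x)(xI-A)^{-1}$, the correction term becomes
\[
\phi\,\operatorname{adj}(xI-A)\,v=P_A(x)\sum_{m\ge 0}(\phi A^mv)\,x^{-m-1}.
\]

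Now I would compare coefficients of $x^{n-k}$ on the two sides of the formula for $P_{A+N}(x)$. On the left the coefficient is $(-1)^kc_k(A+N)$, by definition of the $c_j$. On the right, using $P_A(x)=\sum_{l=0}^n(-1)^lc_l(A)x^{n-l}$, the coefficient of $x^{n-k}$ in $P_A(x)\sum_{m\ge 0}(\phi A^mv)x^{-m-1}$ collects the terms with $l+m+1=k$, i.e. it equals $\sum_{l=0}^{k-1}(-1)^lc_l(A)\,\phi A^{k-1-l}v$; hence the coefficient of $x^{n-k}$ on the right is $(-1)^kc_k(A)-\sum_{l=0}^{k-1}(-1)^lc_l(A)\,\phi A^{k-1-l}v$. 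Multiplying the resulting equality by $(-1)^k$ and re-indexing the sum by $j=k-1-l$ yields exactly
\[
c_k(A+v\otimes\phi)=c_k(A)+\sum_{j=0}^{k-1}(-1)^jc_{k-1-j}(A)\,\phi A^jv .
\]
There is no genuine obstacle here; the only things to watch are the sign bookkeeping through the re-indexing and the legitimacy of the formal Laurent manipulation. The latter can be sidestepped altogether by working throughout with the polynomial identity $(xI-A)\operatorname{adj}(xI-A)=P_A(x)I$, which determines $\operatorname{adj}(xI-A)$ as an explicit matrix polynomial in $x$ whose coefficients are the partial sums of the $(-1)^ic_i(A)A^{\,\ast}$ appearing above, so that the computation never leaves $\F[x]$ and the characteristic of $\F$ plays no role.
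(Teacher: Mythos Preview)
Your proof is correct. The determinant identity for a rank-one perturbation, $\det(xI-A-v\phi)=\det(xI-A)\bigl(1-\phi(xI-A)^{-1}v\bigr)$, is valid over $\F(x)$ since $P_A(x)\ne 0$ there, and the subsequent coefficient extraction (whether via the Laurent expansion in $\F((x^{-1}))$ or via the polynomial recursion for $\operatorname{adj}(xI-A)$ that you mention) is straightforward. The sign bookkeeping under $j=k-1-l$ checks out.

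This is a genuinely different route from the paper's. The paper argues combinatorially: it interprets $c_k(B)$ as a signed sum over ``$k$-placements'' of non-attacking rooks, splits the placements for $B=A+v\otimes\phi$ according to how many entries come from the rank-one part, observes that contributions using two or more such entries cancel because $v\otimes\phi$ has rank one, and then shows that the linear-in-$(v,\phi)$ part has the form $\phi C_k v$ with $C_k$ satisfying the recursion $C_k=c_{k-1}(A)I-AC_{k-1}$. Your argument replaces this combinatorics by the matrix determinant lemma and the classical formula for the coefficients of $\operatorname{adj}(xI-A)$; in effect your $D_{k-1}=\sum_{l=0}^{k-1}(-1)^lc_l(A)A^{k-1-l}$ is exactly the paper's $C_k$, reached by a different road. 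Your approach is shorter and invokes standard tools; the paper's is more self-contained and, as a small dividend, recovers the Cayley--Hamilton theorem by running the recursion one step past $k=n$.
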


With this theorem, we can prove the desired theorems \ref{LinAlg} and \ref{QA-RA} easily:
By replacing $v$ with $\lambda v$ we get
$$c_k(A+\lambda v\otimes\phi)=c_k(A)+\lambda\sum_{j=0}^{k-1}{(-1)^j c_{k-1-j}(A)\phi A^j v}$$
For theorem \ref{LinAlg}, $(1)\Rightarrow(2)$ is a direct consequence, and $(2)\Rightarrow(3)$ is trivial. For $(3)\Rightarrow(1)$, use induction on $k$ to show $\phi A^k v=0$ for all $k\geq 0$. If the claim is true for all non-negative integers smaller than $k$ (it might be that $k=0$ and so this condition is trivial), then

\begin{align*}
\begin{split}
c_{k+1}(A)=\; & c_{k+1}(A+\lambda v\otimes\phi)=c_{k+1}(A)+\lambda\sum_{j=0}^k{(-1)^j c_{k-j}(A)\phi A^j v}\\
= \; & c_{k+1}(A)+(-1)^k\lambda \phi A^k v
\end{split}
\end{align*}

hence the claim.
\\
\\
For theorem \ref{QA-RA}, recall that $[A,\mathrm{gl}(V)]$ is the Zarisky tangent space to the conjugacy class of $A$ in $\mathrm{gl}(v)$. Thus if $v\otimes\phi\in [A,\mathrm{gl}(V)]$,
$$\frac{\partial}{\partial \lambda}P_{A+\lambda v\otimes\phi}|_{\lambda=0}=0$$
That is
$$\frac{\partial}{\partial \lambda}c_k(A+\lambda v\otimes\phi)|_{\lambda=0}=0$$
But since we see in our formula that $c_k(A+\lambda v\otimes\phi)$ is linear in $\lambda$, this means that $c_k(A+\lambda v\otimes\phi)=c_k(A)$, and so by Theorem \ref{LinAlg} that we just proved, $v\otimes\phi\in R_A$.

\subsection{Proof of theorem \ref{CH}}

It will be easier to us to think of $c_k(B)$ (where $B$ is an arbitrary matrix) as the sum of all 'placements of $k$ castles', i.e. the sum over all choices of $k$ entries of $B$ of which no two are in the same row or column, of the product of these $k$ entries with an appropriate sign. we shall call these terms of the sum from now on '$k$-placements' (each $k$-placement is a product of $k$-entries of the matrix with an appropriate sign).
\\
\\
Consider the sum of all $k$-placements of $A+v\otimes\phi$. We can express it as a sum of three parts:

\begin{itemize}
    \item $k$-placements of $A$
    \item $k$-placements which involve one entry of $v\otimes\phi$ and $k-1$ entries of $A$
    \item $k$-placements that involve two or more entries of $v\otimes\phi$
\end{itemize}
Since $v\otimes\phi$ is of rank 1, the sum of all expressions of the third kind is $0$.
The sum of expressions of the first kind is $c_k(A)$ by definition, and so it remains to understand the sum of expressions of the second kind.
\\
Since it is linear in the entries of $v\otimes\phi$, it is of the form $\phi C_k v$ for some $C_k\in \mathrm{gl}(\F^n)$, in which $(C_k)_{i,j}$ is the coefficient of $\phi_iv_j$, thus it is the sum of all products of $k-1$ entries of $A$ which complete the $j,i$ entry to a $k$-placement (with signs). For convenience, we shall call these '$(k-1)$-semi-placements' that complete $j,i$.
\\
\\
We want to show $C_k=\sum_{j=0}^{k-1}{(-1)^j c_{k-1-j}(A) A^j}$ and we shall do so by induction.
For $k=1$, we have $C_1=I=c_0(A)A^0$, hence the formula holds.
\\
Consider $k>1$, and assume that the claim holds for $k-1$. We want to prove the recursion formula
$$C_k=c_{k-1}(A)I-AC_{k-1}$$
Which will prove the claim. 
\\
\\
Let us focus on the $1,1$ entry of $C_k$. It will be the sum of all $(k-1)$-semi-placements on $A$ that complete $1,1$. In this case, all of these semi-placements will actually be placements, and more exactly, these are exactly the placements which don't intersect the first row (or column. It is equivalent). So we can express it the following way:

\begin{align*}
\begin{split}
(C_k)_{1,1}=\; & \sum{[\text{$(k-1)$-placements completing $1,1$}]}\\
= \; & \sum{[\text{$(k-1)$-placements}]}-\sum{[\text{$(k-1)$-placements intersecting the}}\\
& \text{first row}]\\
= \; & c_{k-1}(A)-\sum_{i=1}^n{A_{1,i}\cdot\sum{[\text{$(k-2)$-semi-placements completing $1,i$}]}}\\
= \; & c_{k-1}(A)-\sum_{i=1}^n{A_{1,i}\cdot (C_{k-1})_{i,1}} = \; c_k(A)-(AC_{k-1})_{1,1}\\
= \; & (c_k(A)I-AC_{k-1})_{1,1}
\end{split}
\end{align*}
However this equality will also be true after conjugation by any $B\in \mathrm{GL}(n,\F)$. Thus we have $C_k=c_{k-1}(A)I-AC_{k-1}$ by:
\begin{proposition} \label{1,1 entry}
If $M\in \mathrm{gl}(n,\F)$, satisfies $(BMB^{-1})_{1,1}=0$ for all $B\in \mathrm{GL}(n,\F)$, then $M=0$.
\end{proposition}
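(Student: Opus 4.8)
The plan is to reformulate the hypothesis as a statement about a bilinear form and then test it against a handful of well-chosen vectors. Writing out the $(1,1)$ entry gives $(BMB^{-1})_{1,1} = e_1^t B M B^{-1} e_1 = u^t M w$, where $w := B^{-1}e_1$ is a column vector and $u^t := e_1^t B$ is a row vector; these satisfy $u^t w = e_1^t B B^{-1} e_1 = 1$. Conversely, every pair $(u^t,w)$ with $u^t w = 1$ arises this way: since $u^t w = 1 \neq 0$ we have $u \neq 0$ and $w \notin \ker u^t$, so $\F^n = \langle w\rangle \oplus \ker u^t$; picking a basis $f_2,\dots,f_n$ of $\ker u^t$, the matrix $C$ with columns $w, f_2, \dots, f_n$ is invertible, and $B := C^{-1}$ satisfies $B^{-1}e_1 = C e_1 = w$ and $e_1^t B = e_1^t C^{-1} = u^t$ (because $u^t C = e_1^t$ by construction). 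Hence the hypothesis is equivalent to the condition that $u^t M w = 0$ for every row vector $u^t$ and column vector $w$ with $u^t w = 1$.

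With this in hand, I would simply substitute standard basis vectors. Taking $u = w = e_i$ (so $u^t w = 1$) gives $M_{ii} = 0$ for every $i$, so $M$ has vanishing diagonal. Then, for $i \neq j$, taking $u = e_i$ and $w = e_i + e_j$ (again $u^t w = 1$) gives $0 = e_i^t M(e_i + e_j) = M_{ii} + M_{ij} = M_{ij}$. Thus every entry of $M$ vanishes, i.e. $M = 0$.

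The argument is essentially routine, and the only point deserving a moment of care is the surjectivity claim above — that $B \mapsto (e_1^t B,\, B^{-1}e_1)$ maps $\mathrm{GL}(n,\F)$ onto all (row, column) pairs with product $1$. This is valid over an arbitrary field, since the construction never divides by anything and uses no hypothesis on the characteristic, which is what we need because the surrounding section works over an arbitrary $\F$. If one prefers to bypass this claim entirely, a fully explicit variant works just as well: conjugating by the permutation matrix exchanging indices $1$ and $i$ shows $(BMB^{-1})_{1,1} = M_{ii}$, hence the diagonal vanishes; conjugating by the transvection $B = I - e_j e_1^t$ with $j \neq 1$ (so $B^{-1} = I + e_j e_1^t$) shows $(BMB^{-1})_{1,1} = M_{1,1} + M_{1,j}$, hence $M_{1,j} = 0$; and pre-conjugating $M$ by a permutation before applying the latter — which is legitimate since $PMP^{-1}$ again satisfies the hypothesis — kills every off-diagonal entry. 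Either way the proof is short, and there is no real obstacle beyond choosing the right test matrices.
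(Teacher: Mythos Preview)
Your proof is correct and essentially matches the paper's: both reduce the hypothesis to the bilinear condition $\phi M v = 0$ whenever $\phi v \neq 0$, via the same surjectivity observation about $B \mapsto (e_1^t B,\, B^{-1}e_1)$, which you in fact justify more carefully than the paper does. The only difference is cosmetic --- the paper finishes by noting that for each nonzero $\phi$ the kernel of $\phi M$ contains the complement of a hyperplane and hence is all of $\F^n$, while you read off the individual entries $M_{ij}$ by testing with basis vectors.
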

\begin{proof}
Let $v\in \F^n,\phi\in (\F^n)^*$ such that $\phi v\neq 0$. Then there exists $B\in \mathrm{GL}(n,\F)$ such that $Bv=e_1,\phi B^{-1}=\alpha e_1^*$ (for some $\alpha\in \F^\times$), and so 
$$\phi Mv=\phi B^{-1}BMB^{-1}Bv=\alpha(BMB^{-1})_{1,1}=0$$
Consequently, for any $\phi\neq 0$ we know $\Ker (\phi M)$ contains the complement of a proper subspace of $\F^n$, hence $\phi M=0$. This implies that $M=0$.
\end{proof}
\begin{remark}
The above calculation is valid for all diagonal entries of $C_k$. For non-diagonal entries, computation by hand is a bit more technical, but still doable.
\end{remark}
\begin{remark}
Note that we didn't assume in the proof that $k\leq n$, so we can apply it for $k=n+1$ to get
$$0=C_{n+1}=\sum_{j=0}^{n}{(-1)^j c_{n-j}(A) A^j}=(-1)^nP_A(A)$$
which is Cayley Hamilton theorem.
\end{remark}
%\section{Appendix: the case $p\nmid n$} \label{coprimeChar}
%We only used the fact that $p|n$ in theorem \ref{tr0}, which itself was used only to show that for any point $(A,v,\phi)$ in the support, we have $\phi(v)=0$. However, this fact can be proved in a different method:
%\begin{proposition}
%Any $(\tG,\chi)$-equivariant distribution on $X$ is supported on $\{(A,v,\phi)\in X|\phi v=0\}$
%\end{proposition}
%\begin{proof}
%The map $\kappa:X\to \F$ $(A,v,\phi)\mapsto \phi v$ is $(\tG)$ invariant, and so by Localization principle (theorem \ref{LocPrin}) it is enough to consider $(\tG,\chi)$-equivariant distributions on a single fiber $\kappa^{-1}(a)$ where $a\in \F^\times$, and show they must be 0. The fibers $\kappa^{-1}(a)$ are all isomorphic for $a\in \F^\times$, so it is enough to consider $\kappa^{-1}(1)$.
%We can use Frobenius descent (\ref{Frob}) on $pr:\kappa^{-1}(1)\to V\times V^*$, as the centralizer of $(e_n,e_n^*)$ is $\tG_{n-1}$, which is unimodular.
%Here $e_n$ denotes the last element of the standard basis of $V$, and $e_n^*$ denotes the last element of the standard dual basis of $V^*$.
%\\So we are are left to show that any $(\tG_{n-1},\chi_{n-1})$-equivariant distribution on $X_{n-1}$ is 0, which is exactly the main theorem for $n-1$, and so it follows from the induction hypothesis.
%\end{proof}
%From here the proof continues identically.

\end{document}